\author[C.~Janjigian]{Christopher Janjigian}
\address{Christopher Janjigian\\ Purdue University\\  Department of Mathematics \\ 150 N University St\\ West Lafayette, IN 47901\\ USA.}
\email{cjanjigi@purdue.edu}
\urladdr{http://www.math.purdue.edu/~cjanjigi}
\thanks{C~Janjigian was partially supported by National Science Foundation grant DMS-2125961 and Simons Foundation grant MPS-TSM-00012155.}
\author[F.~Rassoul-Agha]{Firas Rassoul-Agha}
\address{Firas Rassoul-Agha\\ University of Utah\\  Department of Mathematics\\ 155S 1400E\\   Salt Lake City, UT 84112\\ USA.}
\email{firas@math.utah.edu}
\urladdr{http://www.math.utah.edu/~firas}
\thanks{F.\ Rassoul-Agha was partially supported by National Science Foundation grants DMS-2054630 and DMS-2450951.}
\author[T.~Sepp\"al\"ainen]{Timo Sepp\"al\"ainen}
\address{Timo Sepp\"al\"ainen\\ University of Wisconsin--Madison\\  Department of Mathematics\\ 480 Lincoln Drive \\  Madison, WI 53706\\ USA.}
\email{seppalai@math.wisc.edu}
\urladdr{http://www.math.wisc.edu/~seppalai}
\thanks{T.~Sepp\"al\"ainen was partially supported by National Science Foundation grants  DMS-1854619, DMS-2152362, and DMS-2448375, by Simons Foundation grant 1019133, and by the Wisconsin Alumni Research Foundation.}
\begin{document}

\title[Strong existence and uniqueness of the Busemann process in CGM]{Strong existence and uniqueness of the tilt-indexed Busemann process in the planar corner growth model}  

\begin{abstract}
We show that the Busemann process indexed by ``tilts" in the super-differential of the limit shape exists and is unique in the strong sense in the i.i.d.\ planar corner growth model. This means that every probability space that supports the field of i.i.d.~weights supports a copy of the process and any two realizations of the process are equal almost surely. 
\end{abstract}

\setcounter{tocdepth}{1}

\maketitle

\tableofcontents 

\section{Introduction} 
The \textit{corner growth model}, or \textit{directed last-passage percolation} (LPP), is a cornerstone model of random growth in the Kardar-Parisi-Zhang universality class which can profitably be thought of as a directed version of first-passage percolation (FPP), a prototypical example of a random (psuedo-)metric on the lattice. 

FPP is constructed by assigning non-negative weights to the edges of the lattice and then defining the distance between sites to be the infimum over the weights collected along all self-avoiding paths between those sites. As usual, the optimizing paths are known as geodesics. In FPP, substantial recent attention \cite{Dam-Han-14, Dam-Han-17, Hof-08, Ahl-Hof-16-} has focused on the structure of semi-infinite geodesics.

The planar corner growth model has a similar structure with a few differences. Edge weights are replaced by vertex weights and the set of admissible paths is restricted to those which either go up or right in each step. This path restriction allows the vertex weights to take both negative and positive values. By convention, we also replace the minimum over paths with a maximum over paths. This model arises naturally in the context of tandem queueing, for example, where the passage time satisfies the same recursion as the time at which labeled customers are served at labeled service stations. Once again, we have optimizing paths, which we call geodesics by way of analogy to first-passage percolation. Similarly, significant recent attention has been focused on the structure of semi-infinite geodesics.

In the context of metric geometry, a frequently used tool to understand the structure of semi-infinite geodesics are objects known as Busemann functions, originally introduced in \cite{Bus-55}. In lattice growth models, a slight generalization of this notion, which we call \textit{generalized Busemann functions}, can be viewed as the natural coupling of all ergodic and translation invariant stationary distributions of the model. See the introductions to \cite{Jan-Ras-Sep-23, Jan-Ras-Sep-23-1F1S-, Gro-Jan-Ras-25} for a discussion of these connections and a more thorough review of the related literature.

The last decade has seen several constructions of generalized Busemann functions in various models. In both first- \cite{Dam-Han-14} and last-passage percolation \cite{Geo-Ras-Sep-16, Mai-Pra-03}, as well as related models like directed polymers \cite{Jan-Ras-20-aop} and generalizations like random walks in random potentials \cite{Gro-Jan-Ras-25-}, most of this work shows what is known as \textit{weak existence} in the stochastic analysis literature. This means that existence statements take the form: ``there exists a probability space on which generalized Busemann functions are defined.''

Throughout the study of random growth models, a quantity known as the \textit{limit shape} or \textit{free energy density} plays a central role. In general, the existence results mentioned above show that for each element of the subdifferential (superdifferential in LPP)  of the shape, a generalized Busemann function field exists. In 
planar last-passage percolation, these can be glued together by monotonicity to form a \textit{Busemann process} indexed by negatives of the elements of the superdifferential, which we call \textit{tilts}. It is known \cite{Jan-Ras-Sep-23} that this Busemann process encodes geometric properties of geodesics into its analytic behavior.

A line of work originating with Newman \cite{New-95} shows that Busemann functions are almost sure attractors in an appropriate sense, which proves both \textit{strong existence} (meaning that every probability space supporting the weights supports the Busemann process) and \textit{strong uniqueness} (any two generalized Busemann functions associated to the same deterministic tilt are equal almost surely). These methods require control of the curvature and differentiability properties of the limit shape. Proving these types of estimates is a long-standing and difficult open problem outside of exactly solvable models. A collection of results in this vein implying strong existence and uniqueness in the exactly solvable Exponential LPP originally appear in \cite{Cat-Pim-13, Cou-11, Fer-Pim-05}.

About a decade ago, Ahlberg and Hoffman \cite{Ahl-Hof-16-} introduced a theory of ``random coalescing geodesics" in FPP. One of the contributions of that paper is a proof of strong existence of the Busemann functions without unproven hypotheses on the limit shape. Namely, what we call generalized Busemann functions previously constructed in the weak sense in planar first-passage percolation by Damron and Hanson in \cite{Dam-Han-14}, can be pulled back to the original probability space. In \cite{Ahl-Hof-16-}, this was phrased in terms of the associated shift-covariant geodesics.  We show below that the geodesic and Busemann function perspectives are equivalent.  They also introduce a labeling scheme that allows them to prove a similar strong uniqueness statement to the one we prove below. There are technical differences between our treatment and theirs, but our methods are largely inspired by those of \cite{Ahl-Hof-16-}. We do not use the geodesic labeling scheme introduced in \cite{Ahl-Hof-16-}, preferring to work directly with the tilt indexing.

As discussed in \cite{Bak-Cat-Kha-14, Bak-Kha-18, Jan-Ras-Sep-23-1F1S-, Jan-Ras-Sep-23} among other places, one can essentially think of fields of Busemann functions in 
stochastic growth models as eternal solutions of a (discrete) stochastic partial differential equation. From that stochastic analytic point of view, whether or not strong solutions exist and if they satisfy strong uniqueness are standard topics of intrinsic interest. We refer the reader to \cite{Kur-07, Kur-14} for a high level perspective on strong existence and uniqueness of stochastic equations. Aside from this intrinsic interest, the purpose of this paper, and of strong existence in particular, is to provide the right setting for future work. Working on an extended probability space is  unnatural and introduces technical issues. For example,  weight modification arguments that  control infinite geodesics generated by Busemann functions become delicate because the extra noise from the extension  is not independent of the weight field. This extended space is also in general only stationary, rather than ergodic, under lattice translations. This complicates the application of standard results. See, for example, Remark A.5 in \cite{Jan-Ras-Sep-23}, which discusses working around this technical issue.

Strong uniqueness is of interest for related reasons. Without control on the regularity of the limit shape, the only uniqueness results that existed concerned the finite-dimensional marginal \cite{Cha-94, Pra-03, Fan-Sep-20} distributions of generalized Busemann functions. We include \cite{Fan-Sep-20}  here because the proof of Theorem 5.6 in that work can be made general under the same hypotheses as \cite{Cha-94}, though it is not phrased as such. These results were not sufficiently strong to show that any two such processes would have to be equal, leaving open the possible existence of multiple Busemann processes. Pre-existing distributional uniqueness results also require extra hypotheses: applying the results of \cite{Cha-94, Fan-Sep-20} would require the weights to be unbounded from above, while applying \cite{Pra-03} would require the weights to be bounded from below.

Our results, like those of \cite{Ahl-Hof-16-}, are limited to the planar setting. The reason why both strong existence and uniqueness are accessible in a planar setting but remain open in non-planar settings is geodesic coalescence and a path ordering coming from the nearest-neighbor paths and planarity.  In both first- and last-passage percolation, this coalescence comes from the Licea-Newman argument \cite{Lic-New-96}.

We rely on a seminal result of Martin \cite{Mar-04} concerning the curvature of the limit shape of the i.i.d.\ corner growth model near the boundary. Presently, this curvature result is restricted to i.i.d.\ weights, because its proof relies on the exact solvability of the so-called Sepp\"al\"ainen-Johansson model with i.i.d.\ Bernoulli weights \cite{Sep-98-aop-2}.  Ahlberg and Hoffman \cite{Ahl-Hof-16-} work somewhat more generally in their more abstract Condition A2. We have made no attempt to generalize beyond the i.i.d.\ setting. 

As a consequence of strong existence, we also obtain a significant extension of the previously-known ergodicity properties of the Busemann process in the i.i.d.\ corner growth model. Prior to this work, it was known that Busemann functions which correspond to extremal vectors in the subdifferential were separately ergodic under the $e_1$ and $e_2$ shifts \cite{Cha-94}. Because the Busemann process can be realized as a function of an i.i.d.\ field, this can now be upgraded to strong mixing under all nontrivial lattice shifts.

\subsection{Outline}
In Section \ref{sec:set}, we state the assumptions of the model, define terms, recall facts from the previous literature. We state our main results in Section \ref{sec:main}. The proofs of the two main results concerning strong existence and uniqueness appear in Section \ref{sec:unique}. Section \ref{sec:equiv} then shows an equivalence between two ways of setting up the problems we consider. Appendix 
\ref{app:aux} collects some technical lemmas.  Appendix \ref{a:geoBus} proves that every nontrivial geodesic generates a finite Busemann function.

\subsection{Notation and conventions}  
 $\Z$ are the integers, $\Z_+$ the nonnegative and $\Z_-$ the nonpositive integers. $\N=\{1,2,3,\dotsc\}$. $\bbQ$ are the rational numbers, and $\R$ are the real numbers.  $\Z_{>a}$ are integers $>a$. Similarly for $\Z_{<a}$, $\Z_{\ge a}$, and $\Z_{\le a}$.
$\lzb a,b\rzb=[a,b]\cap\Z$.   

When a symbol is needed, $\leb$ denotes the Lebesgue measure on $[0,1]$. We write integrals with respect to this measure using the standard notation $\int_0^1 f(s)\,ds$.

The end of a non-proof structured environment ends with a $\triangle$ to help delineate these from the rest of the text.

\section{Setting and past results}\label{sec:set}

\subsection{Probability space}\label{sec:probsp}
$(\Omhat,\kShat,\Phat)$ denotes a generic Polish probability space equipped with a group of continuous bijections $\That=\{\That_x:x\in\Z^2\}$ from $\Omhat$ onto itself.
In particular, $\That_0$ is the identity map and $\That_x\That_y=\That_{x+y}$ for all $x,y\in\Z^2$. 
  $\Ehat$ denotes expectation under $\Phat$. 
A standing assumption is that  
    \[\text{$\Phat$ is invariant under $\That_x$ for each $x\in\Z^2$.}\] 
Let $\cIhat$ be the $\sigma$-algebra of events that are invariant under the group of shifts $\That$. A generic element of $\Omhat$ is denoted by $\what$. As usual, the $\what$ can be dropped from the arguments of random variables.

We are given random variables $\w_x:\Omhat\to\R$, $x\in\Z^2$, which satisfy the shift-covariance property 
\begin{align}
\w_x(\That_z\what)=\w_{x+z}(\what) \label{eq:Thatcov}
\end{align}
almost surely under $\Phat$.  We assume also that 
\begin{align}\label{iid}
\{\w_x:x\in\Z^2\} \text{ are i.i.d.\ under }\Phat\text{ and satisfy } \Ehat[|\w_0|^{2+\epsilon}]<\infty \text{ for some }\epsilon>0.
\end{align}

To dispense with trivialities we assume
\begin{align}\Ehat[\w_0^2]>\Ehat[\w_0]^2.\label{var>0}\end{align}

Let $\kS$ denote the $\sigma$-algebra on $\Omhat$ generated by $\w=\{\w_x:x\in\Z^2\}$. 
For $x\in\Z^2$ let $\kSfor_x\subset\kS$ denote the $\sigma$-algebra on $\Omhat$ generated by 
$\{\w_y:y-x\in\Z^2_+\}$. 

$\Omega$ denotes the product space $\R^{\Z^2}$ and $\sF= \sB(\bbR^{\Z^2})$ its Borel $\sigma$-algebra. We   use the symbol $\w=(\w_x)_{x\tsp\in\tsp\Z^2}$ to denote  both the $\Omhat\to\Omega$ mapping $\what\mapsto\w(\what)=(\w_x(\what))_{x\tsp\in\tsp\Z^2}$ and a generic element of $\Omega$. The shifts $T=\{T_x : x \in \Z^2\}$ on $\Omega$ are defined by 
\begin{align}\label{T-cov}
  (T_z\w)_x=\w_{x+z}.
\end{align}
The  shifts on the two spaces are related via the following identity, valid for $x,z\in\bbZ^2$ and $\Phat$-almost all $\what \in \Omhat$:
\begin{align}\label{w-cov}
(T_z[\w(\what)])_x\overset{\eqref{T-cov}}=[\w(\what)]_{z+x}\overset{\rm(def.)}=\w_{z+x}(\what)\overset{\eqref{eq:Thatcov}}=\w_x(\That_z\what).
\end{align}
 For $x\in\Z^2$ let $\sFfor_x\subset\sF$ denote the $\sigma$-algebra on $\Omega$ generated by $\{\w_y:y-x\in\Z^2_+\}$.
We denote by $\P(\acdot)=\Phat\{\what: \w(\what)\in\acdot\}$ the probability measure induced by pushing forward $\Phat$ by the map $\what \mapsto \w(\what)$.  By \eqref{iid}, $\P$ is an i.i.d.\ product measure on $\Omega$.

\subsection{Path spaces and order relations}
A path $\pi_{m:n}=(\pi_i)_{i=m}^n$ of points in $\Z^2$ is  \emph{up-right} if it only takes steps in $\{e_1,e_2\}$, meaning $\pi_{i+1} - \pi_{i} \in \{e_1, e_2\}$ for all integers $i \in \lzb m, n-1\rzb$.  This definition extends to semi-infinite and bi-infinite up-right paths.
 Paths are indexed by antidiagonal levels, that is, 
$\pi_i\cdot(\evec_1+\evec_2)=i$ for all $i$ in the relevant range, with one exception:  
the symbol 
$o$ denotes the index of the point of origin of a path. To clarify, if $u$ is the first point of $\gamma$ and
$u\cdot(\evec_1+\evec_2)=m$, then   $\gamma_o=\gamma_m=u$.  
When $u\le v$ are points on a path $\gamma$,   $\gamma_{u:v}$ is the segment of $\gamma$ from $u$ to $v$, including the endpoints $u$ and $v$. Then $\gamma_{\gamma_m:\gamma_n}$ is abbreviated by $\gamma_{m:n}$. If $m$ is an index and $u\in\gamma$, then mixtures $\gamma_{m:u}$ and $\gamma_{u:m}$ are also entirely unambiguous. 

For $\ell\in\Z$ and $u\in\Z^2$ with $u\cdot(e_1+e_2)=\ell$, $\pathsp_u$ denotes the space of semi-infinite up-right paths $\gamma=\gamma_{\ell:\infty}$ on $\Z^2$ that start at $\gamma_\ell=u$. This path space is compact in the product-discrete topology, which can be metrized by  $d(\gamma, \pi) = \sum_{i=\ell}^\infty 2^{-(i-\ell+1)}\one_{\{\gamma_i \neq \pi_i\}}$.   $\pathsp=\bigcup_{u\tsp\in\tsp\Z^2}\pathsp_u$ is the space of all up-right lattice paths.

We use $\preceq$ to denote southeast type partial order relations on various spaces. Relation  $a\precneq b$ means $a\preceq b$ but $a\ne b$. 

For $h,h'\in\R^2$,  $h\preceq h'$ means $h\cdot e_1\le h'\cdot e_1$ and $h\cdot e_2\ge h'\cdot e_2$.   In particular, the simplex $[e_2, e_1]=\{(t,1-t)\in\R^2: 0\le t\le1\}$ of direction vectors is ordered so that $\zeta\preceq\eta$ if and only if  $\zeta\cdot e_1\le \eta\cdot e_1$. 

Between finite and infinite paths the relations $\preceq$ and $\precneq$ are interpreted coordinatewise on the common part of the domains of these paths.   That is, if $\lzb k,\ell\rzb=\lzb m,n\rzb\cap\lzb m',n'\rzb$, then   $\pi_{m:n}\preceq \pi'_{m':n'}$ means that  $\pi_i\preceq \pi'_i$ for each $i\in\lzb k,\ell\rzb$, while $\pi_{k:\ell}\precneq \pi'_{m:n}$ means that $\pi_i\preceq \pi'_i$ for each $i\in\lzb k,\ell\rzb$ and $\pi_i\precneq \pi'_i$ for at least one index $i\in\lzb k,\ell\rzb$.

An additional asymptotic version of southeast ordering  is defined for semi-infinite up-right paths $\pi,\gamma\in\pathsp$ as follows: $\gamma_{k:\infty}\apreceq \pi_{\ell:\infty}$ means that $\pi_{\ell:\infty}$ is \emph{eventually} weakly to the right of  $\gamma_{k:\infty}$.  
Precisely,  there exists an integer $m\ge k\vee\ell$ such that $\gamma_n\preceq \pi_n$ for all integers $n\ge m$.  
If there exists an integer $m\ge k\vee\ell$ such that $\gamma_n=\pi_n$ for all $n\ge m$, then these paths {\it coalesce}, abbreviated by $\gamma_{k:\infty}\coal \pi_{\ell:\infty}$. $\gamma_{k:\infty}\coal \pi_{\ell:\infty}$ is equivalent to  $\gamma_{k:\infty}\apreceq \pi_{\ell:\infty}$ and $\gamma_{k:\infty}\apreceq \pi_{\ell:\infty}$.

For $B,B'\in\R^{\Z^2}$, $B\preceq B'$ means $B(x,x+e_1)\ge B'(x,x+e_1)$ and $B(x,x+e_2)\le B'(x,x+e_2)$ for all $x\in\Z^2$.  This order relation is relevant for functions called cocycles introduced below in Section \ref{s:cocycle}. 

These partial orders turn out to be total orders   and  consistent with each other on the spaces we study:  the super-differential of the shape function, semi-infinite geodesic trees, and recovering cocycles. 

\subsection{Last-passage percolation}
Given weights $\w=(\w_x)_{x\in\Z^2}\in\R^{\Z^2}$ and two distinct points $u\le v$ (coordinatewise) in $\Z^2$, let
\[\Lpp_{u,v}=\max\Bigl\{\sum_{x\in\pi\setminus\{v\}}\w_x:\pi\text{ is up-right from $u$ to $v$}\Bigr\}.\]
A maximizing path in the above is called a \emph{point-to-point geodesic} (from $u$ to $v$). The \emph{rightmost geodesic} $\pi$ from $u$ to $v$ is the unique geodesic between the two points that is to the right of any other geodesic from $u$ to $v$: if $\gamma$ is another geodesic, then $\gamma\preceq\pi$.

A \emph{semi-infinite geodesic}, starting at $u\in\Z^2$ with $m=u\cdot(e_1+e_2)$, is a path $\pi$ with $\pi_m=u$, $\pi_{i+1}-\pi_{i}\in\{e_1,e_2\}$ for all integers $i\ge m$, and such that $\pi_{k:\ell}$ is a geodesic from $\pi_k$ to $\pi_\ell$, for any pair of integers $\ell>k\ge m$. A semi-infinite geodesic is said to be \emph{locally-rightmost} if each finite segment is the rightmost geodesic between its endpoints. 

When $\pi$ and $\gamma$ are both locally-rightmost geodesics that start at the same point $\pi_o=\gamma_o$, $\pi\apreceq\gamma$ is equivalent to $\pi\preceq\gamma$.

For $u\in\Z^2$ and $m\in\Z$ with $m\ge k=u\cdot(e_1+e_2)$ let $\Geo_{u,m}^\w$ denote the set of rightmost geodesics  
 that start at $u$ and end at some $x\in u+\Z^2_+$ at level 
$m=x\cdot(\evec_1+\evec_2)\ge k$. 
The set of semi-infinite up-right paths $\pi_{\parng{k}{\infty}}$ that start at $\pi_k=u$ and satisfy $\pi_{k:m}\in \Geo^\w_{u,m}$  for all $m\in\Z_{\ge k}$ is  denoted by $\Geo_u^\w$. This is the set of all locally-rightmost semi-infinite geodesics started at $u$. A discussion of measurability of $\Geo_u^\w$ appears in Appendix \ref{app:aux}. It is immediate from the definition that 
\be\label{Geo55}u+\Geo^{T_u\w}_0=\Geo_u^\w\quad\text{for all }u\in\Z^2\text{ and }\w\in\Omega. \ee 

The (deterministic) uniqueness of finite rightmost geodesics implies that $\Geo_u^\w$ is a tree. There are two \textit{trivial} locally-rightmost semi-infinite geodesics in $\Geo_u^\w$ given by $u + \Z_+e_1$ and $u + \Z_+e_2$. This follows from the path structure. We say that a semi-infinite geodesic is non-trivial if it is not one of these trivial semi-infinite geodesics. 

The uniqueness of rightmost point-to-point geodesics implies that $\preceq$ is a total order on $\Geo_u^\w$. Precisely, the following three facts hold for each pair of geodesics $\pi$ and $\gamma$  in $\Geo_u^\w$: 
\begin{align}
  \label{G0.i}  &\text{$\pi\preceq \gamma \preceq \pi$ is equivalent to  $\pi= \gamma$.}\\[3pt]
   \label{G0.ii}  &\text{If $\pi\precneq \gamma$ then  $\pi$ and $\gamma$ separate at some point and never intersect again.}  \\[3pt] 
    \label{G0.iii}  &\text{Exactly one of $\pi\precneq \gamma$, $\pi\succneq \gamma$ and  $\pi=\gamma$ holds.}
\end{align}

\subsection{Limit shape}
By the shape theorem in \cite{Mar-04}, there exists a \textit{shape function} $\shape:\bbR_+^2 \to \bbR$ such that with $\P$-probability one
\be\label{sh-th} 
\lim_{n\to\infty} \;\max_{x\, \in \,\bbZ^2_+ : \, |x|_1 = n} \frac{\abs{\Lpp_{0,x} - \shape(x)}}{n} = 0.
\ee
$\shape$  is symmetric, concave, and positively homogeneous of degree one. Homogeneity implies $\shape$ is determined by its restriction to $\Uset = [e_2,e_1]$. 

The super-differential of $\shape$ at $\xi\in \R_+^2$ is
\be
\partial \shape(\xi) = \{h \in \R^2 : \shape(\zeta)-\shape(\xi) \leq h\cdot (\zeta-\xi) \text{ for all }\zeta\in\R_+^2\}. 
\ee
By homogeneity, $\partial \shape(\xi)=\partial \shape(c\xi)$ for any $c>0$. Thus $\partial \shape(\bbullet)$ is also determined by points on $\Uset$. Concavity implies the existence of one-sided derivatives at relative interior points $\xi\in\ri\Uset$:
\begin{align*}
\nabla \shape(\xi \pm) \cdot e_1 = \lim_{\e \searrow 0} \frac{\shape(\xi \pm \e e_1) - \shape(\xi)}{\pm \e} \quad{ and }\quad \nabla \shape(\xi \pm) \cdot e_2 = \lim_{\e \searrow 0} \frac{\shape(\xi \mp \e e_2) - \shape(\xi)}{\mp \e}.
\end{align*}
By \cite[Lemma 4.7(c)]{Jan-Ras-18-arxiv}  differentiability of $\shape$ at $\xi\in\ri\Uset$ is the same as $\nabla \shape(\xi+) = \nabla \shape(\xi -)$. More generally, these values are the extreme points of the convex set $\partial \shape(\xi)$. 

In addition to the shape theorem, Martin proved universal asymptotics of the limit shape which play a key role in many of our arguments. By 
Theorem 2.4 in \cite{Mar-04}, with  $\mu = \bbE[\w_{0}]$ and $\sigma = \sqrt{\Var(\w_{0})}$, 
\be\label{eq:Martin} 
\shape(1,t)=\shape(t,1) = \mu + 2\sigma \sqrt{t} + o\bigl(\sqrt{t}\tspb\bigr) \quad\text{ as $t \searrow 0$}.
\ee 
This  implies that the limit shape has infinitely many faces and  $\partial \shape(e_1) = \partial \shape(e_2) = \emptyset$. These facts are often critically important in nontriviality arguments and this is the primary reason why we must assume the weights are i.i.d.\ in \eqref{iid}.

\begin{remark}
The results of \cite{Mar-04} mentioned above have a slightly weaker moment assumption than in \eqref{iid}. The existence result in \cite{Jan-Ras-20-aop} recorded below as Theorem \ref{thm:exist} also relies on a variational characterization of the limit shape from \cite{Geo-Ras-Sep-16}, which was proven under our stronger moment hypothesis.
\end{remark}

An important index set for Busemann functions is the total superdifferential of the shape function, denoted by 
\begin{align}
\partial \shape(\Uset) = \{h \in \R^2 : \text{there exists } \xi \in \Uset \text{ with }h \in \partial \shape(\xi)\}.\label{eq:pargUset}
\end{align}
In the sequel, we call a subset of negatives of elements of this set \textit{tilts}. By Lemma 4.6(a) in \cite{Jan-Ras-20-aop}, $\preceq$ is a total order on $\partial\shape(\Uset)$.
Combining equation (4.14) in \cite{Geo-Ras-Sep-17-ptrf-1} with Lemma 4.6(c) in \cite{Jan-Ras-20-aop}, one sees that $\partial \shape(\Uset)$ is a one dimensional curve. The connection to the shape is illustrated in Figure \ref{fig:shape}. Lemma 4.6(c) implies in particular that $\partial\shape(\Uset)$ is the union of the line segments $[\nabla\shape(\xi+),\nabla\shape(\xi-)]$ over $\xi\in\ri\Uset$. 

\begin{figure}
\includegraphics[scale=.5]{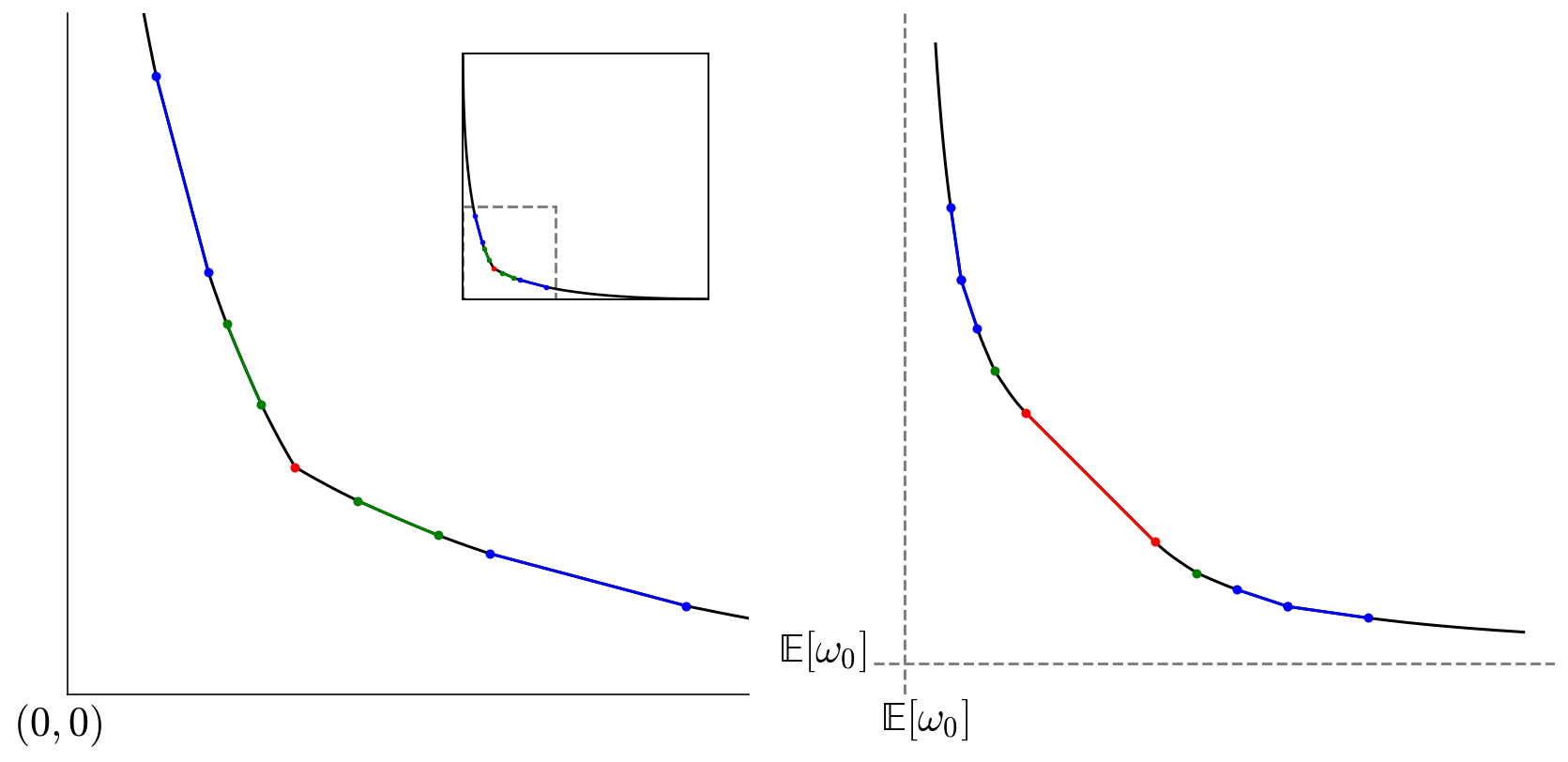}
\caption{\small A level set $\{x\in\R_+^2:\shape(x)=1\}$ of a limit shape (left) and its associated 
super-differential curve $\partial \shape(\Uset)$ (right). The full level set of the limit shape is depicted in the inset image on the left and the dashed lines indicate the enlarged portion. 
This shape has four linear segments and a cusp on the diagonal. 
It is differentiable at the endpoints of the two inner segments (green) 
and non-differentiable at five points: the four endpoints of the outer segments (blue) and the cusp on the diagonal (red). 
Non-differentiability points of the shape correspond to line segments in the super-differential, which represent the intervals of slopes of supporting lines at those points, while linear segments of the shape correspond to non-differentiability points in the super-differential.}\label{fig:shape}
\end{figure}

\subsection{Recovering cocycles and Busemann functions} 
\label{s:cocycle} 
\begin{definition}
A function $\Busgeo:\Z^2\times\Z^2\to\R$ is a \emph{cocycle} if 
\be\begin{aligned}\label{cocycle}
\Busgeo(x,y)+\Busgeo(y,z)=\Busgeo(y,z)\quad\text{for all }x,y,z\in\Z^2.
\end{aligned}\qedhere\ee
\end{definition}

\begin{definition}
Given real weights $\w=(\w_x)_{x\in\Z^2}\in\R^{\Z^2}$, a function $\Busgeo:\Z^2\to\R$ is said to \emph{recover} the weights $\w$ if it satisfies the following \emph{recovery property}:
\be\begin{aligned}\label{recovery}
\Busgeo(x,x+e_1)\wedge\Busgeo(x,x+e_2)=\w_x\quad\text{for all $x\in\Z^2$}.
\end{aligned}\qedhere\ee
\end{definition}

Given a recovering cocycle $\Busgeo$, a semi-infinite up-right path $\pi$ is called an $\Busgeo$-geodesic in weights $\w$ if it satisfies 
    \begin{align}\label{By rule}
\Busgeo(\pi_i,\pi_{i+1}) =\w_{\pi_i}   
    \end{align}
at each step of $\pi$.  
Such a path is always a geodesic in weights $\w$ because for any other up-right path $(x_i)_{i=m}^n$ from $u=\pi_m$ to $v=\pi_n$, 
\[\sum_{i=m}^{n-1}\w_{x_i}
\le\sum_{i=m}^{n-1}\Busgeo(x_i,x_{i+1})
=\Busgeo(u,v)=\sum_{i=m}^{n-1}\Busgeo(\pi_i,\pi_{i+1})=\sum_{i=m}^{n-1}\w_{x_{\pi_i}}.
\]

By Theorem \ref{thm:g400} in Appendix \ref{a:geoBus}, under \eqref{iid}, there exists an event $\Omega_0\in\kS$ of full $\P$-probability on which, for every nontrivial semi-infinite geodesic $\pi$, the limits 
\be\label{B-gen4} 
\Busgeo_\pi(\w,x,y)=\lim_{n\to\infty}(\Lpp_{x,\pi_n}(\w)-\Lpp_{y,\pi_n}(\w))\ee
define a recovering  cocycle. This is the definition originally introduced by Busemann \cite{Bus-55} in metric geometry. Thus, $\Busgeo_\pi$ is called the \emph{Busemann function} generated by $\pi$.  
The definition of $\Busgeo_\pi$ implies  \eqref{By rule} and thereby $\pi$ itself is  an  $\Busgeo_\pi$-geodesic.

The fact that $\Lpp_{x+z,y+z}(\w)=\Lpp_{x,y}(T_z\w)$ gives that if $\pi$ is a semi-infinite geodesic in the weights $T_z\w$ then $z+\pi=(z+u:u\in\pi)$ is a
semi-infinite geodesic in the weights $\w$ and
	\begin{align}\label{B-cov}
	\Busgeo_{z+\pi}(\w,x+z,y+z)=\Busgeo_\pi(T_z\w,x,y).
	\end{align}

Lemma \ref{lm:B-order} states that, $\P$-almost surely,
for any nontrivial $\gamma\apreceq \pi$ in $\Geo^\w$, $\Busgeo_\gamma\preceq \Busgeo_\pi$. 
    Consequently, if $\gamma\coal \pi$, then $\Busgeo_\gamma=\Busgeo_\pi$.

\subsection{Generalized Busemann functions, the Busemann process, and existence}

\begin{definition}
    A measurable function $\Bhat:\Omhat\times\Z^2\times\Z^2\to\R$ is 
\emph{shift-covariant} if for $\Phat$-almost every $\what$,
\begin{align}\label{covariance}
\Bhat(\what,x+z,y+z)=\Bhat(\That_z\what,x,y)\quad\text{for all $x,y,z\in\Z^2$}.
\end{align}
$\Bhat$ is $L^1(\Omhat,\kShat,\Phat)$ if
\be\begin{aligned}\label{BL1}
\Ehat[\abs{\Bhat(x,y)}]<\infty\quad\text{for all $x,y\in\Z^2$}. 
\end{aligned}\ee
When the underlying probability space is clear from context, we simply say that $\Bhat$ is $L^1$.
\end{definition}

The main objects that we consider in this work are shift-covariant, recovering, $L^1$ cocycles, which we call generalized Busemann functions.

\begin{definition}
A \emph{shift-covariant recovering $L^1(\Omhat,\kShat,\Phat)$ cocycle} is a shift-covariant $L^1$ measurable function 
$\Bhat:\Omhat\times\Z^2\times\Z^2\to\R$
such that $\Bhat$ is $\Phat$-almost surely a recovering cocycle.  
The collection of all such cocycles is denoted by $\cKhat$, and its elements are called \emph{generalized Busemann functions}.
\end{definition}

For $\Bhat\in\cKhat$ 
define the random 2-vector $\hhB(\Bhat)=\hhB(\Bhat,\what)\in\R^2$ via
\begin{align}\label{h-def}
	\hhB(\Bhat)\cdot e_i=-\Ehat[\Bhat(0,e_i)\,|\,\cIhat],\quad i\in\{1,2\}.
\end{align}
By \cite[Theorem 4.4]{Jan-Ras-20-aop} (see \cite[Theorem B.3]{Jan-Ras-18-arxiv} for the details),  for $\Phat$-a.e.\ $\what$, 
\begin{align}\label{B-shape}
\lim_{n\to\infty}n^{-1}\max_{\abs{x}_1\le n}\abs{\Bhat(\what, 0,x)+\hhB(\Bhat, \what)\cdot x}=0.
\end{align}
We have the following lemma connecting generalized Busemann functions to the superdifferential of the shape function. Recall the set $\partial \shape(\Uset)$  defined in  \eqref{eq:pargUset}.

\begin{lemma}\textup{\cite[Lemma 4.5]{Jan-Ras-20-aop}}\label{lem:hsupdif}
A generalized Busemann function $\Bhat \in \cKhat$ has the following properties: 
\begin{enumerate}[label={\rm(\alph*)}, ref={\rm\alph*}]   \itemsep=3pt  
\item\label{hsupdif.a} $-\hhB(\Bhat)$ takes values in $\partial \shape(\Uset)$, $\Phat$-almost surely.
\item\label{hsupdif.b} If $-\Ehat[\hhB(\Bhat)]\in\partial\shape(\xi)$ for some $\xi \in \Uset$, then $-\hhB(\Bhat)\in\partial\shape(\xi)$ $\Phat$-almost surely.
\item\label{hsupdif.c} If $-\Ehat[\hhB(\Bhat)] \in\{\nabla \shape(\xi+),\nabla\shape(\xi-)\}$ for some $\xi \in \Uset$, then $\hhB(\Bhat) = \Ehat[\hhB(\Bhat)]$ $\Phat$-almost surely. \qedhere
\end{enumerate}
\end{lemma}

As mentioned in the introduction, when the weights are unbounded from above \cite{Cha-94} or bounded from below \cite{Pra-03}, it is known that for each $h \in -\partial\shape(\Uset)$ there exists at most one distribution of a generalized Busemann function satisfying $\Phat\{\what:\hhB(\Bhat,\what)=h\}=1$. In the stochastic analysis literature, such a result is referred to as weak uniqueness \cite{Kur-07,Kur-14}. However, weak uniqueness falls short of establishing the strong existence that is one of the central goals of this work.


We begin our discussion by recalling that weak existence, meaning existence of a probability space which supports these cocycles, is already known for cocycles with a mean vector given by an extreme point of the superdifferential. By an extension theorem argument, the problem of showing existence of shift-covariant recovering cocycles on an extended probability space is equivalent to constructing translation invariant stationary distributions for LPP (or, equivalently, for G/G/1/$\infty$ queueing models). See Section 2.4 of \cite{Jan-Ras-20-jsp} for a sketch of how such an argument goes.

A partial result proving existence of certain stationary queueing fixed points for the tandem queueing model connected to the general i.i.d.\ weight corner growth model was originally established by Mairesse and Prabhakar \cite{Mai-Pra-03} under the assumption that the weights are bounded from below with $>2$ moments, but phrased in queueing language. These were used to generate generalized Busemann functions  for the corner growth model in \cite{Geo-Ras-Sep-17-ptrf-1}. Connections to geodesics were explored in \cite{Geo-Ras-Sep-17-ptrf-2}. \cite{Jan-Ras-20-aop} subsequently removed the boundedness below requirement for existence.  By monotonicity, these constructions also build a \textit{Busemann process} on the extended space. This is a shift-covariant, recovering cocycle-valued stochastic process indexed by $-\partial \shape(\Uset)\times\{+,-\}$ as described by the next theorem.

\begin{theorem}{\rm\cite[Theorem 4.7]{Jan-Ras-20-aop}}\label{thm:exist} There exists a probability space $(\Omhat,\kShat,\Phat)$,  equipped with an additive group of continuous bijections $\That=\{\That_x:x\in\Z^2\}$ and satisfying the hypotheses of Section \ref{sec:probsp}, on which there exists a stochastic process
\begin{align} \label{Bhat65}   
    \bigl(\Bhat^{h\sigg}(x,y) : x,y\in\bbZ^2, h \in -\partial \shape(\Uset),\, \sigg \in \{+,-\}\bigr)
\end{align}
with the following properties:
\begin{enumerate}  [label={\rm(\alph*)}, ref={\rm\alph*}]   \itemsep=3pt
\item \textup{(}No $\pm$ distinction at fixed $h$\textup{)} For each $h \in -\partial \shape(\Uset)$, 
\[
\Phat\bigl\{\Bhat^{h-}(x,y) = \Bhat^{h+}(x,y)\bigr\}=1.
\]
When $\Bhat^{h-}(\what,x,y) = \Bhat^{h+}(\what,x,y)$, call the common value $\Bhat^h(\what, x,y)$.
\item\label{exist.b} \textup{(}Generalized Busemann function\textup{)} For each $h \in -\partial \shape(\Uset), \Bhat^h \in \cKhat$.  
\item \textup{(}Mean $-h$\textup{)} For each $h \in -\partial \shape(\Uset),$ $\Ehat[\Bhat^h(0,e_i)] = -h \cdot e_i.$
\item \textup{(}Monotonicity\textup{)} For $h,h'\in -\partial \shape(\Uset)$ with $h \cdot e_1 \leq h' \cdot e_1$, all $x \in \bbZ^2$, and $\Phat$-almost every $\what$
\[ 
    \Bhat^{h-}(x,x+e_1) \geq  \Bhat^{h+}(x,x+e_1) \geq  \Bhat^{h'-}(x,x+e_1) \geq  \Bhat^{h'-}(x,x+e_1)  
\]
and
\[ 
    \Bhat^{h-}(x,x+e_2) \leq  \Bhat^{h+}(x,x+e_2) \leq  \Bhat^{h'-}(x,x+e_2) \leq  \Bhat^{h'-}(x,x+e_2).
\]
\item \textup{(}Left-/right-continuity\textup{)} For $\Phat$ almost all $\what$, for all $h\in -\partial \shape(\Uset)$,
\begin{equation*}\begin{aligned}
\Bhat^{h-}(x,y) = \lim_{\substack{-\partial \shape (\Uset)\tsp\ni\tsp  h' \to h\\ h'\cdot e_1 \nearrow h \cdot e_1}} \Bhat^{h\pm}(x,y) \qquad \text{ and } \Bhat^{h-}(x,y) = \lim_{\substack{-\partial \shape (\Uset)\tsp\ni\tsp  h' \to h\\ h'\cdot e_1 \searrow h \cdot e_1}} \Bhat^{h\pm}(x,y).
\end{aligned}\end{equation*}
\item\label{exist.backward} \textup{(}Backward independence\textup{)} For any $I \subset \bbZ^2$, the random variables $\{\w_x, \Bhat^{h\pm}(x,y) : y \geq x,\, x \in I,\, h \in -\partial \shape(\Uset)\}$ are independent of the  weights  $\{\w_x : \forall z\in I , \, x  \not\geq z \}$ behind $I$.\qedhere
\end{enumerate}
\end{theorem}
\begin{remark}\label{rem:compat}
    With the conventions of this paper, time progresses in the south-west direction on the lattice. Condition \eqref{exist.backward} above is then the type of temporal compatibility condition that one often sees when studying weak solutions of stochastic equations. See, for example, the discussion in \cite{Kur-14}. 
\end{remark}

\begin{remark}\label{rem:dir} The Busemann process is also often indexed by directions. This indexing corresponds to restricting the process to the subset of $\partial \shape(\Uset)$ given by the extreme points of each super-differential interval, $-h \in \{\nabla \shape(\xi\sigg) : \xi \in \Uset, \sigg \in \{+,-\}\}$. In principle, it is possible that tilt-indexing gives a richer process if there exist directions of non-differentiability. If the shape is differentiable, as is widely believed to be true in the setting of this work, then the two are equivalent.
\end{remark}

Having now addressed weak existence through Theorem \ref{thm:exist} (so we are not studying the empty set), we return to the study of generalized Busemann functions on a general extended probability space satisfying the assumptions of Section \ref{sec:probsp}. 

\subsection{Cocycle geodesics}
One of the main reasons for geometric interest in generalized Busemann functions is the fact, recorded above as \eqref{By rule}, that any recovering cocycle determines a local rule that constructs semi-infinite geodesics. It is possible in principle for a recovering cocycle to have ties (meaning locations $\Bhat(x,x+e_1)=\Bhat(x,x+e_2)=\w_x)$ even if the weights have a continuous distribution. In this paper, we generate the geodesics that we study with the tie-breaking rule that favors the $e_1$ step. 

For $\Bhat\in\cKhat$ and $x\in\Z^2$ let $\edgehat_x^\Bhat:\Omhat\to\{e_1,e_2\}$ be the $\kShat$-measurable random variable defined by 
	\be\edgehat_x^\Bhat(\what)=\begin{cases}e_1&\text{if }\Bhat(\what,x,x+e_1)\le \Bhat(\what,x,x+e_2),\\ e_2&\text{if }\Bhat(\what,x,x+e_1)> \Bhat(\what,x,x+e_2).\end{cases}\label{eq:e1tie}\ee
Think of $\edgehat^\Bhat=\{\edgehat_x^\Bhat:x\in\Z^2\}$ as arrows at the lattice sites so that $x$ points to $x+\edgehat_x^\Bhat(\what)$.

For $u\in\Z^2$ let $\cgeod{}^{\Bhat,u}(\what)$ denote the path that starts at $u$ and follows the arrows given by $\edgehat^\Bhat(\what)$. 
For $\Phat$-almost every $\what$, these paths satisfy \eqref{By rule} for $\Busgeo=\Bhat(\what)$. Therefore, 
these are semi-infinite geodesics in the environment  $\w(\what)$.  
The $e_1$ tie-breaking rule in \eqref{eq:e1tie} makes $\cgeod{}^{\Bhat,u}(\what)$ the rightmost among all $\Bhat$-geodesics out of $u$. In particular, it is locally rightmost and so $\cgeod{}^{\Bhat,u}(\what)\in\Geo^{\w(\what)}_u$.
$\edgehat^\Bhat$ is a measurable way to encode all these (locally rightmost) semi-infinite $\Bhat$-geodesics. 

The shift-covariance of $\Bhat$ gives, $\Phat$-almost surely, $\edgehat_x^\Bhat(\That_z\what)=\edgehat_{x+z}^\Bhat(\what)$ for all $x,z\in\Z^2$, and hence
\begin{align}\label{xhat-cov}
\cgeod{}^{\Bhat,u+z}(\what)=z+\cgeod{}^{\Bhat,u}(\That_z\what).
\end{align}

Lemma \ref{lem:xhat nontrivial} in the appendix says that these are nontrivial semi-infinite geodesics:
\be\label{no e2}  \Phat\bigl\{ \what:   \cgeod{}^{\Bhat,u}(\what)\not\in\{u+\Z_+e_1,u+\Z_+e_2\}\bigr\}=1.  
\ee


\begin{definition}
Given a shift-covariant recovering $L^1$ cocycle $\Bhat$ we say that it \emph{has coalescing {\rm(}rightmost{\rm)} semi-infinite geodesics} if $\cgeod{}^{\Bhat,u}(\what)\coal\cgeod{}^{\Bhat,v}(\what)$ for all $u,v\in\Z^2$ and $\Phat$-almost all $\what$.  Let $\cKhatcoal\subset\cKhat$ denote the subspace of  shift-covariant recovering $L^1$ cocycles with coalescing {\rm(}rightmost{\rm)} semi-infinite geodesics. 
\end{definition}

Lemma \ref{phih-unique} below states that if $\Bhat\in\cKhatcoal$, then coalescence of its rightmost geodesics implies uniqueness of its locally rightmost geodesics: $\Phat$-almost surely, for all $u\in\Z^2$, $\cgeod{}^{\Bhat,u}(\what)$ is the \emph{unique} $\Bhat$-geodesic in the tree $\Geo^{\w(\what)}_u$.

We next record that the cocycles in Theorem \ref{thm:exist} are examples of members of $\cKhatcoal$.

\begin{theorem}\label{DLR-coal} {\rm\cite[Theorem A.1]{Geo-Ras-Sep-14}}
    In the setting of Theorem \ref{thm:exist}, $\Bhat^h\in\cKhatcoal$ for each $h\in-\partial\shape(\Uset)$.
\end{theorem}

The above result comes from an adaptation of the Licea-Newman \cite{Lic-New-96} coalescence argument. As noted above, the details are provided in
     Theorem A.1 in \cite{Geo-Ras-Sep-14}. 
The backward independence in Theorem \ref{thm:exist}\eqref{exist.backward} implies the finite energy condition used in that coalescence proof.

We close this section with the introduction of analogous notation on  the canonical space $(\Omega,\sF,\bbP)$.

\begin{definition}
Let $\cK$ be the space of shift-covariant recovering $L^1(\Omega,\sF,\P)$ cocycles. 
Let $\cKcoal\subset\cK$ be the subspace of cocycles that have coalescing {\rm(}rightmost{\rm)} geodesics. A cocycle $\Bus\in\cK$ is   
\emph{forward measurable} if, for all $u\in\Z^2$ and  $\{x,y\}\subset u+\Z^2_+$, $\w\mapsto \Bus(\w,x,y)$ is $\sFfor_u$-measurable. We denote by $\cKfor\subset\cK$ the subspace of forward measurable cocycles   
and $\cKcoalfor=\cKcoal\cap\cKfor$. 
\end{definition}
\begin{remark}
In our setting, forward measurability is precisely the usual notion of \textit{adaptedness} to the filtration of the driving noise.
\end{remark}

\subsection{Covariant coalescing systems of random geodesics}
In \cite{Ahl-Hof-16-}, the authors approach closely related problems to those we consider from a different starting point. Instead of focusing on the generalized Busemann functions and the cocycles they generate, Ahlberg and Hoffman begin their study with covariant coalescing systems of geodesics and use these to construct Busemann functions. We briefly define the relevant terms in this section. One of the results in the next section, Theorem \ref{thm:xB}, essentially says that these two approaches are equivalent.

\begin{definition}
A \emph{random (locally rightmost) geodesic} out of $u\in\Z^2$ is a measurable mapping $\pihat:\Omhat\to\pathsp_u$ such that 
\[\Phat\{\what: \pihat(\what)\in\Geo_u^{\w(\what)}\}=1. \qedhere\] 
\end{definition}

\begin{definition}
A \emph{system of random geodesics} is a family of random geodesics 
$\{\pihat^u(\what):u\in\Z^2\}$ such that for each $u\in\Z^2$, $\pihat^u$ is a random geodesic out of $u$.

The system is \emph{coalescing} if 
    \[\Phat\bigl\{\forall u,v\in\Z^2:  \pihat^u(\what)\coal\pihat^v(\what)\bigr\}=1.\]

The system is \emph{shift-covariant} if, $\Phat$-almost surely,
\[\pihat^u(\what)=u+\pihat^0(\That_u\what)\quad \text{ for }  u\in\Z^2. \]

$\Gcchat$ denotes the set of shift-covariant coalescing systems of random geodesics. \qedhere
\end{definition}

\begin{remark}
\cite{Ahl-Hof-16-} refers to what we call a \textit{shift-covariant system of coalescing geodesics} as \textit{random coalescing geodesics}. We use this slightly different terminology because in last-passage percolation, it has been proven that there exist random systems of coalescing geodesics which are not shift-covariant. For example, the system of rightmost geodesics in the exponential last-passage percolation going in the direction of the competition interface rooted at the origin. See \cite[Theorem 3.11]{Jan-Ras-Sep-23}. A similar statement can be expected to hold in first-passage percolation as well.
\end{remark}

Any shift-covariant system of random geodesics can be generated by the member emanating from $0$.
Conversely, every random geodesic $\pihat$ out of $0$ generates a shift-covariant system of random geodesics $\pihat^u$, $u\in\Z^2$, defined by 
\be\label{pihat56}  \pihat^u(\what)=u+\pihat(\That_u\what)\in\Geo_u^{\w(\what)}\quad \text{ for }  u\in\Z^2.\ee
With this notation, $\pihat^0=\pihat$. We will abbreviate $\{\pihat^u:u\in\Z^2\}$ by writing $\pihat^\aabullet$.

We say that a shift-covariant system $\pihat^\aabullet$ of geodesics is \emph{non-crossing} if
    \[\Phat\bigl\{\forall u,v\in\Z^2:  \pihat^u(\what)\coal\pihat^v(\what)\text{ or }\pihat^u(\what)\cap\pihat^v(\what)=\varnothing\bigr\}=1.\]
In plain language, non-crossing geodesics coalesce if they ever touch.   
The recovering cocycle $\Busgeo_{\pihat^u}$ generated by the individual random geodesic $\what\mapsto\pihat^u(\what)$ is the function on $\Omhat\times\Z^2\times\Z^2$ defined in terms of \eqref{B-gen4} by
\be\label{Bpi89}  \Busgeo_{\pihat^u}(\what, x,y)= \Busgeo_{\pihat^u(\what)}(\w(\what), x,y).   
\ee

As one might expect, if one starts with a covariant system of coalescing geodesics, these geodesics generate a single covariant recovering cocycle.
\begin{lemma}\label{Api-cov}
    If $\pihat^\aabullet\in\Gcchat$, then $\Busgeo_{\pihat^0}$ is shift-covariant.
\end{lemma}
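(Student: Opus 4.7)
The plan is to unravel the definition of $\Busgeo_{\pihat^0}$ and move the shift from $\what$ through both the path and the weights, using three ingredients in turn: (i) the coalescence property of the system, which tells us that different roots generate the same Busemann function; (ii) the shift-covariance of the system itself, which expresses $\pihat^z$ in terms of $\pihat^0$ composed with $\That_z$; and (iii) the identity \eqref{B-cov}, which is how the pathwise Busemann function transforms under a spatial shift of the weights. Together with the setup identity $\w_x(\That_z\what)=(T_z\w(\what))_x$ from Section \ref{sec:probsp}, these assemble into the desired covariance.

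Concretely, fix $x,y,z\in\Z^2$ and work on the full $\Phat$-probability event on which: the limits \eqref{B-gen4} exist and define a recovering cocycle along every nontrivial geodesic ray (Theorem \ref{thm:g400}); the covariance $\pihat^{u}(\what)=u+\pihat^0(\That_u\what)$ holds for every $u\in\Z^2$; the rays $\pihat^0(\what)$ and $\pihat^z(\what)$ coalesce; coalescing geodesics generate equal Busemann functions (Lemma \ref{lm:B-order}); and $\w(\That_z\what)=T_z\w(\what)$. Starting from the definition \eqref{Bpi89},
\[
\Busgeo_{\pihat^0}(\what,x+z,y+z)
=\Busgeo_{\pihat^0(\what)}(\w(\what),x+z,y+z).
\]
The coalescence step replaces the root path $\pihat^0(\what)$ by $\pihat^z(\what)$ without changing the value, and then shift-covariance of the system rewrites $\pihat^z(\what)=z+\pihat^0(\That_z\what)$:
\[
\Busgeo_{\pihat^0(\what)}(\w(\what),x+z,y+z)
=\Busgeo_{z+\pihat^0(\That_z\what)}(\w(\what),x+z,y+z).
\]
Now apply \eqref{B-cov} with $\pi=\pihat^0(\That_z\what)$ and weight configuration $\w(\what)$ to pull the $z$ out of both the path and the base points, and use $\w(\That_z\what)=T_z\w(\what)$ to convert the shifted weights back into $\w$ evaluated at $\That_z\what$:
\[
\Busgeo_{z+\pihat^0(\That_z\what)}(\w(\what),x+z,y+z)
=\Busgeo_{\pihat^0(\That_z\what)}(T_z\w(\what),x,y)
=\Busgeo_{\pihat^0(\That_z\what)}(\w(\That_z\what),x,y),
\]
which by \eqref{Bpi89} is exactly $\Busgeo_{\pihat^0}(\That_z\what,x,y)$.

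There is no real obstacle beyond bookkeeping: since the assertion is for fixed $x,y,z$, a single countable intersection of the full-measure events listed above suffices, and the identity extends to all $x,y,z\in\Z^2$ simultaneously on a shift-invariant $\Phat$-full event. The argument uses nothing about the cocycle or recovery properties of $\Busgeo_{\pihat^0}$ beyond its pointwise definition along the random ray, so the lemma is essentially a direct consequence of coalescence plus the two built-in covariance identities.
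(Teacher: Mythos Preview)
Your proof is correct and follows essentially the same approach as the paper: both arguments chain together the coalescence $\pihat^0\coal\pihat^z$ via Lemma \ref{lm:B-order}, the system's shift-covariance $\pihat^z(\what)=z+\pihat^0(\That_z\what)$, and the identity \eqref{B-cov}, together with $\w(\That_z\what)=T_z\w(\what)$. The only cosmetic difference is that the paper starts from $\Busgeo_{\pihat^0}(\That_z\what,x,y)$ and works forward, whereas you start from $\Busgeo_{\pihat^0}(\what,x+z,y+z)$ and apply the same steps in reverse order.
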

The proof of this lemma is almost immediate, but we defer it to the beginning of Section \ref{sec:equiv}. With these basic facts and definitions in place, we now turn to the statements of our results.

\section{Main results}\label{sec:main}

We begin our discussion with the main results of the paper: strong existence and uniqueness of generalized Busemann functions and the Busemann process. We then turn to the connections between our approach and that of \cite{Ahl-Hof-16-}. 

\subsection{Strong existence and uniqueness}
Our first main result is strong existence of generalized Busemann functions with deterministic tilt vectors and coalescing geodesics.

\begin{theorem}\label{thm:Bhat-B}
Let $\Bhat\in\cKhatcoal$ and  $h=\Ehat[\hhB(\Bhat)]$. Assume that $\Phat\{\what:\hhB(\Bhat,\what)=h\}=1$. Then there exists a forward measurable cocycle $B\in\cKcoalfor$ defined on the canonical space $\Omega$ such that $\Bhat(\what)=\Bus(\w(\what))$, $\Phat$-almost surely. 
\end{theorem}

Theorem \ref{thm:decomp} below generalizes this representation to an ergodic decomposition of each $\Bhat\in\cKhatcoal$.

\begin{remark}\label{rk:counter0}
Theorem \ref{thm:Bhat-B} states that the $\Phat$-almost sure condition $\hhB(\Bhat, \what)=\Ehat[\hhB(\Bhat)]$ is sufficient for the existence of an $\kS$-measurable version of $\Bhat$. This condition is also necessary: if $\Bhat\in\cKhat$ is $\kS$-measurable, then $\hhB(\Bhat)$ is measurable with respect to the $\sigma$-algebra $\cI$ of $\kS$-events invariant under the group of shifts $\That$. By the i.i.d.\ assumption \eqref{iid}, this $\sigma$-algebra is trivial, which forces $\hhB(\Bhat,\what)=\Ehat[\hhB(\Bhat)]$ $\Phat$-almost surely. Example \ref{ex:counter} below illustrates how this condition can fail.

On the other hand, Lemma \ref{lem:hsupdif}\eqref{hsupdif.c} asserts that if 
\[\Ehat[\hhB(\Bhat)] \in \bigl\{-\nabla\shape(\xi\sig):\xi\in\ri\Uset,\;\sigg\in\{-,+\}\bigr\},\]  
then we do have $\hhB(\Bhat)=\Ehat[\hhB(\Bhat)]$ $\Phat$-almost surely. Further discussion is given in Remarks \ref{rk:counter1} and \ref{rk:counter2} after we introduce the set $\cH$ in \eqref{cHdef}.
\end{remark}
We also record the following sufficient condition to verify the hypothesis of Theorem \ref{thm:Bhat-B}. The proof is deferred to Appendix \ref{app:aux}.
\begin{lemma}
  Let $\Bhat\in\cKhat$ such that for some $\hat u\in(\R_+\times\R_-)\cup(\R_-\times\R_+)\setminus\{0\}$, $\hhB(\Bhat)\cdot \hat u=\Ehat[\hhB(\Bhat)]\cdot\hat u$, $\Phat$-almost surely. Then $\hhB(\Bhat)=\Ehat[\hhB(\Bhat)]$, $\Phat$-almost surely.\label{lem:ergosuf}
\end{lemma}

The second main result is strong uniqueness on the canonical space. Note that because the canonical space is shift-ergodic, if $\Bus \in \cKcoal$, then $\hhB(\Bus)$ is necessarily deterministic.

\begin{theorem}\label{thm:uniqueness}
Let $\Bus_1,\Bus_2\in\cKcoal$. Then exactly one of the following three happens: 
\begin{enumerate}  [label={\rm(\alph*)}, ref={\rm\alph*}] 
\item $\hhB(\Bus_1)\precneq\hhB(\Bus_2)$ and $\P(\Bus_1\precneq\Bus_2)=1$,
\item $\hhB(\Bus_1)\succneq\hhB(\Bus_2)$ and $\P(\Bus_2\succneq\Bus_1)=1$, or 
\item $\hhB(\Bus_1)=\hhB(\Bus_2)$ and $\P(\Bus_1=\Bus_2)=1$.
\end{enumerate}
\end{theorem}

 Denote by 
\begin{align}\label{cHdef}
\cH=\{\hhB(\Bus):\Bus\in\cKcoal\}
\end{align}
the collection of \textit{tilts} associated to shift-covariant recovering cocycles with coalescing geodesics which are defined on $\Omega$. Again, because the i.i.d.\ $\bbP$ is shift-ergodic, $\hhB(\Bus)$ is non-random for each  $\Bus \in \cKcoal$. 

\begin{proposition}
We have
\begin{align}
\{-\nabla \shape(\xi\sigg): \xi \in \ri\Uset, \,\sigg \in \{+,-\}\} \subset \sH \subset -\partial \shape(\Uset).\label{eq:Hinc}
\end{align}
\end{proposition}

\begin{proof}
The second inclusion comes from Lemma \ref{lem:hsupdif}\eqref{hsupdif.a}. For the first inclusion, take $\xi\in\ri\Uset$ and $\sigg\in\{-,+\}$. Let $h=-\nabla\shape(\xi\sigg)$. 
Theorem \ref{thm:exist} furnishes a cocycle $\Bhat\in\cKhat$ with $\Ehat[\hhB(\Bhat)]=h$. By Lemma \ref{lem:hsupdif}\eqref{hsupdif.c}, $\hhB(\Bhat)=h$, $\Phat$-almost surely. Theorem \ref{DLR-coal} says $\Bhat\in\cKhatcoal$.
 Then Theorem \ref{thm:Bhat-B} gives a $B\in\cKcoal$ with $\hhB(B)=h$, which implies that $h\in\cH$.
\end{proof}

By concavity, the function $\shape$ is differentiable at all but countably many points $\xi\in\ri\Uset$. By \eqref{eq:Martin}, the set on the left-hand side of \eqref{eq:Hinc} has at least countably infinitely many elements.

\begin{remark}
It is an open question whether either one of the inclusions in \eqref{eq:Hinc} is strict. 
\begin{enumerate}  [label={\rm(\alph*)}, ref={\rm\alph*}] 
\item If $\shape$ is differentiable, as is widely expected under assumption \eqref{iid}, $\partial \shape(\Uset)$ consists of genuine gradients $\nabla \shape(\xi)$ and then 
both inclusions in \eqref{eq:Hinc} are equalities. As mentioned in Remark \ref{rem:dir}, if $\xi$ is a direction of non-differentiability, then  the   super-differential $\partial\shape(\xi)$ is a nondegenerate line segment. 
\item If $h\in-\ri\partial\shape(\xi)$ for some $\xi\in\ri\Uset$ exists that is associated to an ergodic cocycle with coalescing geodesics (an element of $\cKhatcoal$   with $\hhB(\Bhat)$ deterministic), then the first inclusion in \eqref{eq:Hinc} is strict. 
\item The second inclusion is an equality if and only if every element of the negative of the super-differential is associated to an ergodic cocycle with coalescing geodesics. Then we say   there are no \textit{gaps} in $\sH$. Brito and Hoffman \cite{Bri-Hof-21} give an example of an ergodic FPP model where the limit shape is a diamond ($\ell^1$ ball). In this case there are only four semi-infinite geodesics rooted at the origin and the associated $\sH$ consists of the four gradients corresponding to the linear segments of the shape.  This provides an example where \textit{gaps} in   $\cH$ exist.
\end{enumerate}
\end{remark}

\begin{example}\label{ex:counter}
    We provide here an example where $\hhB(\Bhat)$ is not deterministic.
    Take distinct $h',h''\in\cH$. Then there exist $B^{h'},B^{h''}\in\cKcoal$ such that $\hhB(B^{h'})=h'$ and $\hhB(B^{h''})=h''$. Take $t\in(0,1)$. Let $Z$ be a Bernoulli($t$) random variable defined on the space $\{0,1\}$. Let $\Omhat=\Omega\times\{0,1\}$, equipped with the product Borel $\sigma$-algebra $\kShat$ and the shifts $\That_x(\w,Z)=(T_x\w,Z)$, $x\in\Z^2$. Let $\Phat$ be the distribution of $\what=(\w,Z)$. Let $\Bhat(\what)=Z B^{h'}(\w)+(1-Z)B^{h''}(\w)$.
    Then, the definition \eqref{h-def} gives $\hhB(\Bhat,\what)=Zh'+(1-Z)h''$. \end{example}

\begin{remark}\label{rk:counter1}
Theorem \ref{thm:Bhat-B} implies that for $\Bhat\in\cKhatcoal$ to have an $\kS$-measurable version, it is necessary that $\Ehat[\hhB(\Bhat)]\in\cH$. This condition can be made to fail by choosing $h',h''\in\cH$ in Example \ref{ex:counter} so that the line segment $[h',h'']$ is not entirely contained in $\cH$. The existence of such $h'$ and $h''$ follows from the asymptotics \eqref{eq:Martin}, which show that $\partial\shape(\Uset)$ cannot be a single line segment.
\end{remark}

\begin{remark}\label{rk:counter2}
Following up on Remark \ref{rk:counter0}, suppose there exists an $h\in\cH\setminus\bigl\{-\nabla\shape(\xi\sigg) : \xi \in \ri \Uset, \sigg\in\{+,-\}\bigr\}$. Then, by Lemma 4.6(a) in \cite{Jan-Ras-20-aop}, $h$ is in the interior of a line segment $[-\nabla\shape(\xi-),-\nabla\shape(\xi+)]$ for some $\xi\in\ri\Uset$ at which $\shape$ is not differentiable. In this case, there exists a $t\in(0,1)$ such that 
$h=th'+(1-t)h''$, where $h'=-\nabla\shape(\xi-)\precneq-\nabla\shape(\xi+)=h''$. Example \ref{ex:counter} now gives a $\Bhat\in\cKhatcoal$ with $\Ehat[\hhB(\Bhat)]=h\in\cH$ and yet $\hhB(\Bhat)\ne h$, $\Phat$-almost surely.
\end{remark}

By Theorem \ref{thm:uniqueness}, for each $h\in\cH$ there exists a unique $B^h\in\cKcoal$ such that $\hhB(B^h)=h$.
However, these cocycles are random variables defined up to null sets that may depend on $h$. We now construct a coupling of these random variables, which produces the tilt-indexed Busemann process on the canonical space $\Omega$. The index set of this process is $\cH\times\{-,+\}$. The process is obtained by taking left and right limits of the cocycles $B^h$ as $h$ ranges over a countable dense subset of $\cH$.

\begin{theorem}\label{thm:stexist} There exists a stochastic process 
\begin{align}
    \bigl(\Bus^{h\sig}(x,y) : x,y\in\bbZ^2,\, h \in \sH,\, \sigg \in \{+,-\}\bigr)
\end{align}
on $(\Omega, \sF, \bbP)$ with the following properties:
\begin{enumerate}  [label={\rm(\alph*)}, ref={\rm\alph*}]   \itemsep=3pt
\item\label{h-=h+} \textup{(}No $\pm$ distinction at fixed $h$\textup{)} For each $h \in \sH$, 
\[
\bbP\{\Bus^{h-}= \Bus^{h+}\}=1.
\]
When $\Bus^{h-}(\w) = \Bus^{h+}(\w)$, call the common cocycle $\Bus^h(\w)$.
\item\label{thm:stexist:Bus} \textup{(}Forward measurable Busemann function\textup{)} For each $h\in \sH, \Bus^h \in \cKcoalfor$.
\item \textup{(}Mean $-h$\textup{)} For each $h \in \sH$ and $i\in\{1,2\}$, $\bbE[\Bus^h(0,e_i)] = -h \cdot e_i.$
\item \textup{(}Monotonicity\textup{)} For $h,h'\in \sH$ with $h \cdot e_1 \leq h' \cdot e_1$, all $x \in \bbZ^2$, and $\bbP$ almost every $\w$
\[ 
    \Bus^{h-}(x,x+e_1) \geq  \Bus^{h+}(x,x+e_1) \geq  \Bus^{h'-}(x,x+e_1) \geq  \Bus^{h'-}(x,x+e_1)  
\]
and
\[ 
    \Bus^{h-}(x,x+e_2) \leq  \Bus^{h+}(x,x+e_2) \leq  \Bus^{h'-}(x,x+e_2) \leq  \Bus^{h'-}(x,x+e_2).
\]
\item \textup{(}Left-/right-continuity\textup{)} For $\bbP$-almost all $\w$, all $h\in \sH$, and all $x,y\in\Z^2$,
\begin{equation*}\begin{aligned}
\Bus^{h-}(x,y) = \lim_{\substack{\sH\tsp\tsp\ni\tsp \tsp h' \to h\\ h'\cdot e_1 \nearrow h \cdot e_1}} \Bus^{h\pm}(x,y) \,\text{ and }\, \Bus^{h-}(x,y) = \lim_{\substack{\sH\tsp\tsp\ni\tsp \tsp h' \to h\\ h'\cdot e_1 \searrow h \cdot e_1}} \Bus^{h\pm}(x,y).
\end{aligned}\end{equation*}
\end{enumerate}
Moreover, this process is unique in the sense that any two processes satisfying the above conditions are equal almost surely.
\end{theorem}

\begin{remark}
If one instead wishes to work with the (potentially) smaller direction-indexed Busemann process coming from restricting to $\{-\nabla \shape(\xi\pm) : \xi\in\Uset\}\subset \sH$ or with finite dimensional distributions (i.e. finitely many $h\in\sH$) of the process, strong uniqueness still holds.
\end{remark}

\begin{remark}
The shift-covariance 
implicitly contained in part \eqref{thm:stexist:Bus} of Theorem \ref{thm:stexist} is inherited by the full process. In particular, we have that
\[
(\Bus^{\aabullet}(x,y) : x,y\in\bbZ^2) \circ T_z = (\Bus^{\aabullet}(z+x,z+y) : x,y\in\bbZ^2) 
\]
$\bbP$-almost surely. 
\end{remark}

As a consequence of the above observation and the fact that $\bbP$ is an i.i.d.~measure on $\Omega=\bbR^{\bbZ^2}$, we have strong mixing.
\begin{corollary}
Call $\Bus^{\aabullet} = (\Bus^{\aabullet}(x,y) : x,y\in\bbZ^2)$ and let $z\in\bbZ^2\backslash\{0\}$. Then $\Bus^{\aabullet}$ is strongly mixing under $T_z$. Explicitly, this means that for all events $A \in \sF$ and Borel $C$,
\[
\lim_{n\to\infty}\bbP(A, \Bus^{\aabullet}\circ T_z^{-n}\in C ) = \bbP(A)\tspb\bbP(\Bus^{\aabullet}\in C). \qedhere
\]
\end{corollary}

 Combining Theorems \ref{thm:Bhat-B}, \ref{thm:uniqueness}, and \ref{thm:stexist} yields the following ergodic decomposition of the elements of $\cKhatcoal$. It implies that if $\hhB(\Bhat)$ is not deterministic, then the distribution of $\Bhat$ is a convex mixture of the distributions of cocycles in $\cKcoalfor$ coming from the tilt-indexed Busemann process.

\begin{theorem}\label{thm:decomp}
    Let $\Bhat\in\cKhatcoal$. Then  for $\Phat$-almost every $\what$, $\hhB(\Bhat,\what)\in\cH$ and  
    \[\Bhat(\what)=B^{\hhB(\Bhat,\what)-}(\w(\what))=B^{\hhB(\Bhat,\what)+}(\w(\what)).\]
\end{theorem}


\begin{remark}
Theorem \ref{thm:decomp} can be interpreted as another strong uniqueness statement. Any covariant recovering cocycle with coalescing geodesics on any space satisfying the hypotheses of Section \ref{sec:probsp} can be written as a mixture of elements of the (strongly unique) tilt-indexed Busemann process. Any such mixture is entirely captured by the law of the tilt vector $\hhB(\Bhat)$, defined in \eqref{h-def}, which encodes the dependence of the process on the $\That$-invariant $\sigma$-algebra $\widehat{\sI}$. The shift-covariance forces this mixture to assign zero mass to the exceptional tilts at which the Busemann process has discontinuities.
\end{remark}

\subsection{Connections with covariant coalescing systems of geodesics}
Our remaining results concern the connection between generalized Busemann functions with coalescing geodesics and covariant systems of coalescing geodesics. The proofs of the results in this section are contained in Section \ref{sec:equiv}.

Our first result is one of independent interest. Our next result says that if $\Bhat$ is a  shift-covariant recovering $L^1$ cocycle and  the  geodesics $\cgeod{}^{\Bhat,\aabullet}$    defined with the $e_1$ tiebreaking rule \eqref{eq:e1tie} coalesce, then these geodesics are the unique locally rightmost $\Bhat$-geodesics.
 Ties in a  cocycle can occur even if no two distinct paths with the same endpoints have equal weights. This result is therefore nontrivial even under the assumption that $\w_0$ has a continuous distribution.
\begin{lemma}\label{phih-unique}
    Take $\Bhat\in\cKhatcoal$. For $\Phat$-almost every $\what$ and each $u\in\Z^2$, $\cgeod{}^{\Bhat,u}(\what)$ is the only $\Bhat$-geodesic in the tree $\Geo_u^{\w(\what)}$.
\end{lemma}

Our final result in this section gives the equivalence between studying covariant systems of coalescing geodesics and studying shift covariant recovering $L^1(\bbP)$ cocycles. The first part of the next result says that if one uses a coalescing system of covariant geodesics to generate a cocycle, then that cocycle can in turn be used to recover the same family of locally rightmost geodesics by using the local rule in \eqref{eq:e1tie}. In light of the previous uniqueness result, this is not surprising. 

 Recall that $(\Omhat, \kShat,\Phat,\That)$ is ergodic if, for every event $H\in\kShat$ with $\That_x H=H$ for all $x\in\Z^2$, we have $\Phat(H)\in\{0,1\}$. In particular,   $(\Omega,\sF,\P,T)$ is ergodic by the i.i.d.\ assumption. The second part of the statement of the next result says that if $(\Omhat, \kShat,\Phat,\That)$ is ergodic, then any shift-covariant coalescing family of geodesics defines a recovering shift-covariant $L^1$ cocycle. The  $L^1$ property of the cocycle  is not obvious and is false if the ergodicity assumption is dropped. This fact is  recorded in Remark \ref{rem:ergonec} following the theorem.  The proof of the theorem relies on our strong existence result, Theorem \ref{thm:stexist}, and Martin's shape asymptotic  \eqref{eq:Martin}. 

\begin{theorem}
Let $(\Omhat, \kShat,\Phat,\That)$ satisfy the hypotheses of Section \ref{sec:probsp}. Let $\pihat^\aabullet\in\Gcchat$ and let $\Bhat=\Busgeo_{\pihat^0}$. 
Then $\Phat\{\forall u\in\Z^2:\cgeod{}^{\Bhat,u}=\pihat^u\}=1$.
Moreover, if $(\Omhat, \kShat,\Phat,\That)$ is ergodic, then   $\pihat^\aabullet\in\Gcchat$ implies $\Busgeo_{\pihat^0}\in\cKhatcoal$.\label{thm:xB}
\end{theorem}
\begin{remark}
By taking mixtures, the $L^1$ portion of the claim that $\Busgeo_{\pihat^0}\in\cKhatcoal$ in the second part of Theorem \ref{thm:xB} is false without the ergodicity assumption. This is because the asymptotics of $\shape$ in \eqref{eq:Martin} show $\E[B^{-\nabla\shape(\xi-)}(0,e_1)]=\nabla\shape(\xi-)\cdot e_1\to\infty$ as $\xi\to e_2$ and $\E[B^{-\nabla\shape(\xi+)}(0,e_2)]=\nabla\shape(\xi+)\cdot e_2\to\infty$ as $\xi\to e_1$. \label{rem:ergonec}
\end{remark}
With our main results stated, we now turn to the proofs, beginning with strong existence.
\section{Strong existence and uniqueness of generalized Busemann functions}\label{sec:unique}

The main aim of the first part of this section is to start with a given generalized Busemann function on $\Omhat$ with coalescing geodesics, $\Bhat\in\cKhatcoal$, construct an $\kS$-measurable version of its coalescing geodesics, and subsequently use this to obtain an $\kS$-measurable version of $\Bhat$ itself. While one could try to rely on general measure-theoretic techniques (e.g., sections) to produce a system of coalescing geodesics---similar to the approach used in \cite[Theorem 3.2]{Jan-Ras-20-aop} to establish the $\P$-a.s.\ existence of directed geodesics after proving their $\Phat$-a.s.\ existence---the challenge lies in maintaining shift-covariance across all $T_x\w$. 

The basic idea, following a similar construction in \cite{Ahl-Hof-16-}, is to apply a variant of the classical inverse CDF sampling method to the conditional distribution of a $\Bhat$-geodesic, given $\kS$. Because the path space is totally ordered (see Figure \ref{fig:geotree}), we can define quantile functions and this method works essentially the same way as for real random variables. The outcome is a process of random geodesics which is shift-covariant by construction. We then show that under appropriate hypotheses on $\Bhat$, this process is essentially constant and defines a family of coalescing geodesics, which then generate a Busemann function that equals $\Bhat$ almost surely. Strong uniqueness comes from showing that there is a total ordering on such objects indexed by the tilt vector and so in particular any two generalized Busemann functions with the same deterministic tilt vector must be equal. Extending these properties to the full process is essentially immediate from monotonicity.

We begin by discussing left- and right-isolated geodesics. These play a central role in the argument.

\subsection{Isolated geodesics}
Since $\Geo_u^\w$ is totally ordered and compact, it has the greatest lower bound and least upper bound properties. For $u\in\Z^2$, let $m=u\cdot(e_1+e_2)$ and define these collections of semi-infinite geodesics rooted at $u$: 
\be\begin{aligned}
\LI_u^\w&=\bigcup_{ \substack{\sigma_{m:n} \text{ up-right, } n \tsp\in\tsp \Z_{\ge m}\\[3pt] \exists \tsp\pi\tsp\in\tsp \Geo_u^\w : \,\pi_{m:n}=\sigma_{m:n}}}\bigl\{\inf\{\pi \in \Geo_{u}^\w : \pi_{m:n}=\sigma_{m:n}\}\bigr\}\quad\text{and}\\
\RI_u^\w&=\bigcup_{ \substack{\sigma_{m:n} \text{ up-right}, n \tsp\in\tsp \Z_{\ge m}\\[3pt] \exists \tsp\pi\tsp\in\tsp \Geo_u^\w : \, \pi_{m:n}=\sigma_{m:n}}}
\bigl\{\sup\{\pi \in \Geo_{u}^\w : \pi_{m:n}=\sigma_{m:n}\}\bigr\}.
\end{aligned}\label{eq:isodense}
\ee 
Note that the conditions in the unions above and the locally-rightmost semi-infinite geodesics appearing in \eqref{eq:isodense} (e.g.\ $\sup\{\pi \in \Geo_{u}^\w : \pi_{m:n}=\sigma_{m:n}\}$ for some finite up-right path $\sigma_{m:n}$ with $\sigma_m=u$) are $\kSfor_u$-measurable, because they can be constructed inductively from the arrows $\edge_{u,x}^i(\w)$ described at the beginning of Appendix \ref{app:aux}.

A nontrivial locally-rightmost semi-infinite geodesic $\lambda\in\Geo_u^\w$ is \emph{left-isolated} if it is not a strictly increasing  limit of members of $\Geo_u^\w$ from the left, equivalently, $\lambda\succneq \sup\{\gamma \in \Geo_{u}^\w : \gamma\precneq \lambda\}$.  Analogously, a nontrivial $\rho\in\Geo_u^\w$ is \emph{right-isolated} if $\rho \precneq \inf\{\gamma \in \Geo_u^\w : \gamma \succneq \rho\}$. A nontrivial locally-rightmost semi-infinite geoddesic is right-isolated if and only if  it is in $\RI_u^\w$ and it is left-isolated if and only if  it is in $\LI_u^\w$.  To see this, take $\lambda \in \Geo_u^\w$ and consider the first index $n$ at which $\lambda$ differs from the supremum of locally-rightmost semi-infinite geodesics which are strictly less than it. Then $\lambda$  is the infimum of all locally-rightmost semi-infinite geodesics which contain the segment $\lambda_{u:n}$. The right-isolated case is similar. Note  that $u+\Z_+e_2\in\LI_u^\w$ and $u+\Z_+e_1\in\RI_u^\w$ and also that $\RI_u^\w$ is right-dense in $\Geo_u^\w$ and $\LI_u^\w$ is left-dense in $\Geo_u^\w$. See Figure \ref{fig:geotree}.

\begin{figure}[h!]
\centering
\includegraphics[width=.5\textwidth]{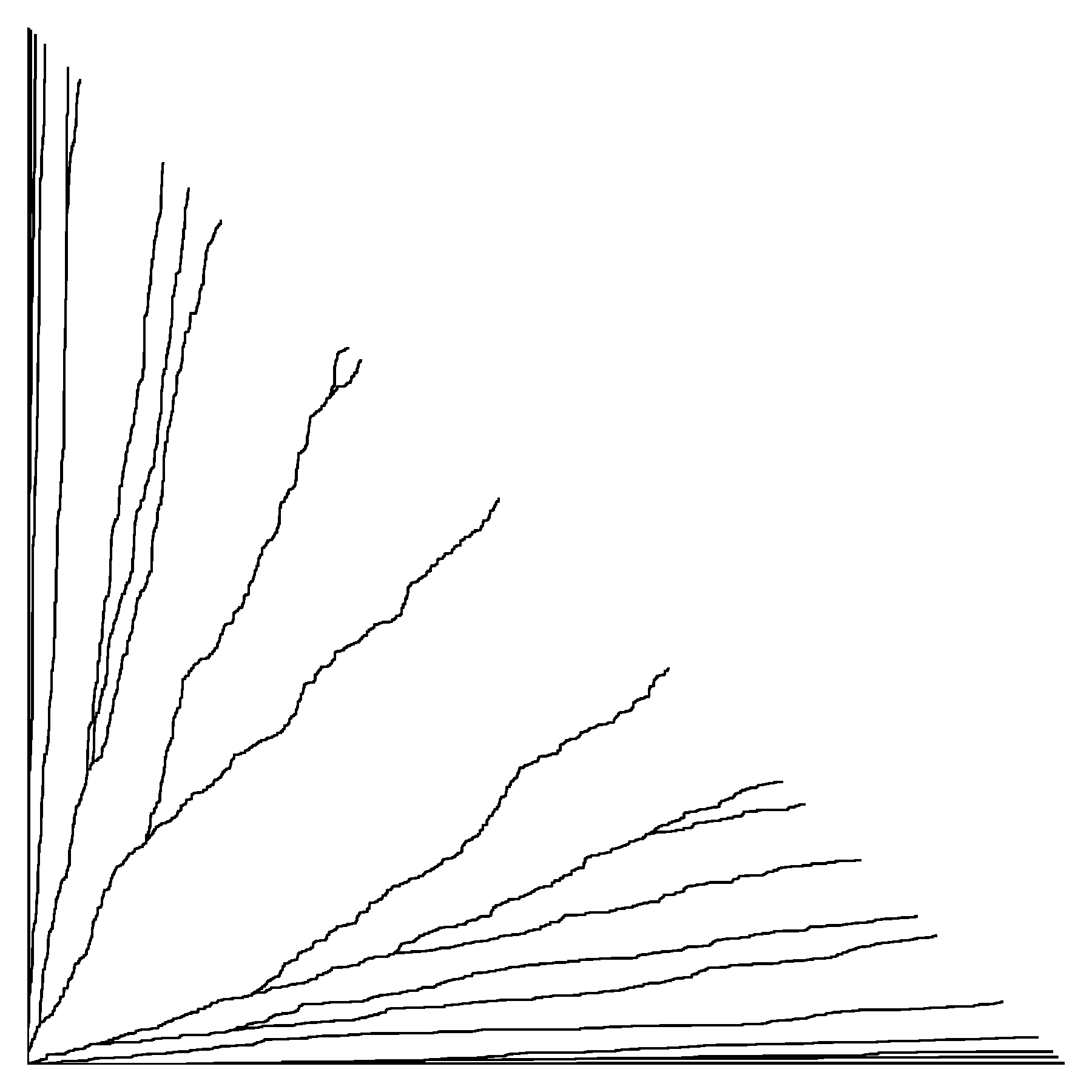}
\caption{A simulation of an approximation to the tree $\Geo_0^\w$ of infinite geodesics rooted at the origin in the i.i.d.~Exponential(1) model. Right-isolated geodesics are those which cannot be approximated from the right (in the topology in which paths converge if they are eventually equal on finite sets) in this tree. Graphically, such geodesics appear as the left boundary of one of the connected components of the quadrant $\bbZ_+^2$ which contain no vertices on any geodesics of $\Geo_0^\w$. Left-isolated geodesics are the right boundary of such a region.}\label{fig:geotree}
\end{figure}

Let 
    \[\Geo^\w=\bigcup_{u\tsp\in\tsp\Z^2}\Geo_u^\w\]
be the collection of all locally-rightmost semi-infinite geodesics from all  starting points.

On the union $\Geo^\w$,   $\pi\apreceq\gamma$ and $\gamma\apreceq\pi$ happen together  if and only if  $\pi\coal\gamma$.  In this situation, after their first meeting the two remain together, again by virtue of the uniqueness of rightmost point-to-point  geodesics.

\subsection{Strong existence}
Let $\nu^{\w}$ denote the conditional distribution of $\Phat$ on $(\Omhat, \kShat)$ given $\kS$. Recall the countable right-dense set of $\kSfor_u$-measurable right-isolated geodesics $\RI_u^\w \subset\Geo_u^\w$, defined in \eqref{eq:isodense}, and note that by definition $\RI_u^{T_z\w} = \RI_{u+z}^\w$ for all $u,z\in\Z^2$. 

For $u\in\Z^2$ and $s \in \bbQ\cap [0,1]$, define an  $\kS$-measurable locally-rightmost semi-infinite geodesic $\cdfpi^{\Bhat,u,s} \in \Geo_u^\w$ via
\begin{align}
\cdfpi^{\Bhat,u,s}(\what)=\inf\bigl\{\rho\in\RI_u^{\w(\what)}:\nu^{\w(\what)}(\cgeod{}^{\Bhat,u}\preceq \rho) \geq s\bigr\}.\label{eq:inf-formula-rational}
\end{align}
The set in \eqref{eq:inf-formula-rational} is not empty because it contains the trivial geodesic $u+\Z_+e_1$. We took the infimum in \eqref{eq:inf-formula-rational} over the countable set $\RI_u^{\w(\what)}$  to ensure measurability. We verify that this expression is measurable in Corollary \ref{cor:infmeas} below. 

Extend this definition to $s \in [0,1]$ by setting 
\begin{align}
\cdfpi^{\Bhat,u,s}(\what)= \sup\bigl\{\cdfpi^{\Bhat,u,r}(\what) : r \in \bbQ\cap[0,1], \,r <  s\bigr\}.\label{eq:inf-formula}
\end{align}
Denote by $\cdfpi^{\Bhat,u,\aabullet}$ the process $\bigl(\cdfpi^{\Bhat,u,s}:s\in[0,1]\bigr)$, defined through \eqref{eq:inf-formula}. In Lemma \ref{lm:xBhatproc}\eqref{lm:xBhatproc.defined}, we show that \eqref{eq:inf-formula-rational} and \eqref{eq:inf-formula}  agree on rational $s$.  In Lemma \ref{lm:xBhatproc}\eqref{lm:xBhatproc.inf}, we show that the infimum can be taken over the uncountable tree $\Geo_u^{\w(\what)}$ for any $s\in[0,1]$.

Since $\Geo_u^\w$ is closed, we have, for $\Phat$-almost every $\what$, $\cdfpi^{\Bhat,u,s}(\what)\in\Geo_u^{\w(\what)}$ for all $s\in[0,1]$. In the statement of the next result, $\Skor([0,1],\bbX_u)$ and $\Skor([0,1],\Geo_u^\w)$ are Skorokhod spaces of left-continuous paths with right limits (see \cite[Section 3.5]{Eth-Kur-86} for a definition of the Skorokhod topology) taking values in the compact metric spaces $\bbX_u$ and $\Geo_u^\w$, respectively.

In what follows, the phrase ``for $\Phat$-almost every $\what$'' means  the existence of an $\kShat$-measurable event of full $\Phat$-measure such that the stated property holds for each $\what$ in this event. By taking intersections of all shifts by $\{\That_x : x \in\bbZ^2\}$, this event can without loss of generality be assumed to be shift invariant.

\begin{lemma}\label{lm:xBhatproc}
For each $\Bhat\in \cKhat$, the process $\cdfpi^{\Bhat,u,\aabullet}$  satisfies the following properties: 
\begin{enumerate}  [label={\rm(\alph*)}, ref={\rm\alph*}]   \itemsep=3pt
\item\label{lm:xBhatproc.defined} 
For $\Phat$-almost every $\what$, the definition in \eqref{eq:inf-formula} agrees with \eqref{eq:inf-formula-rational} for rational $s$, so $\cdfpi^{\Bhat,u,\aabullet}(\what)$ is well-defined.
\item\label{lm:xBhatproc.meas} $\what\mapsto \cdfpi^{\Bhat,u,\aabullet}(\what)$ is an $\kS$-measurable, $\Skor([0,1],\bbX_u)$-valued random variable which almost surely takes values in $\Skor([0,1],\Geo_u^{\w(\what)})$.
\item\label{lm:xBhatproc.inf} 
For $\Phat$-almost every $\what$ and all $s \in [0,1]$,
\begin{align}
\cdfpi^{\Bhat,u,s}(\what)=\inf\bigl\{\gamma\in\Geo_u^{\w(\what)}:\nu^{\w(\what)}(\cgeod{}^{\Bhat,u}\preceq \gamma)\geq s\bigr\}.\label{eq:fullinf}
\end{align}
Consequently, $\cdfpi^{\Bhat,u,r}(\what)\preceq\cdfpi^{\Bhat,u,s}(\what)$ for $r\le s$ in $[0,1]$.
\item \label{lm:xBhatproc.key} 
For $\Phat$-almost every $\what$ and all $\gamma \in \Geo_u^{\w(\what)}$,
\begin{align}
\bigl\{s\in [0,1] : \cdfpi^{\Bhat,u,s}(\what) \preceq \gamma\bigr\}=\bigl[0,\nu^{\w(\what)}(\cgeod{}^{\Bhat,u} \preceq \gamma)\bigr]. 
\label{eq:key-sym} 
\end{align}
\item \label{lm:xBhatproc:cov} 
For $\Phat$-almost every $\what$, all $u,z\in\Z^2$, and all $s\in[0,1]$,
\be\begin{aligned}\label{x-cov}
\cdfpi^{\Bhat,u+z,s}(\what)=z+\cdfpi^{\Bhat,u,s}(\That_z\what).
	\end{aligned}\qedhere\ee
\end{enumerate}
\end{lemma}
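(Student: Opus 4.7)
The process $\cdfpi^{\Bhat,u,\aabullet}$ is a path-space adaptation of classical inverse-CDF sampling applied to the conditional law of the $\Bhat$-geodesic $\cgeod{}^{\Bhat,u}$ given the $\kS$-measurable weights $\w$. Writing $F(\gamma,\what) = \nu^{\w(\what)}(\cgeod{}^{\Bhat,u}\preceq \gamma)$, the formula \eqref{eq:inf-formula-rational} reads $\cdfpi^{\Bhat,u,s} = \inf\{\rho \in \RI_u^\w : F(\rho) \geq s\}$, restricted to the countable skeleton $\RI_u^\w$ only to secure $\kS$-measurability. The plan is to verify the quantile-function structure at rational $s$, upgrade the infimum from $\RI_u^\w$ to the full tree $\Geo_u^\w$, and then extend to all $s \in [0,1]$ via the sup definition \eqref{eq:inf-formula}.

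\medskip
\noindent At rational $s$, measurability in part (b) follows because $\RI_u^\w$ is a countable $\kSfor_u$-measurable family and the condition $F(\rho) \geq s$ is $\kS$-measurable for each fixed $\rho \in \RI_u^\w$; monotonicity in $s$ is immediate from the nesting of the defining sets. The heart of the argument is part (c): given $\gamma \in \Geo_u^\w$ with $F(\gamma) \geq s$, I would approximate $\gamma$ from above by a decreasing sequence $\rho_n \in \RI_u^\w$ (possible because $\RI_u^\w$ is right-dense), use monotonicity of $F$ to conclude $F(\rho_n) \geq s$, and pass to the limit in $\cdfpi^{\Bhat,u,s} \preceq \rho_n$ to obtain $\cdfpi^{\Bhat,u,s} \preceq \gamma$. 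The same approximation, combined with the monotone-continuity identity $\{\cgeod{}^{\Bhat,u}\preceq\rho_n\}\downarrow\{\cgeod{}^{\Bhat,u}\preceq\gamma\}$ which forces $F(\rho_n)\downarrow F(\gamma)$, shows moreover that the infimum is attained: $F(\cdfpi^{\Bhat,u,s}) \geq s$.

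\medskip
\noindent The key identity \eqref{eq:key-sym} in part (d) now follows at rational $s$: if $\cdfpi^{\Bhat,u,s}\preceq \gamma$ then monotonicity and attainment give $s \leq F(\cdfpi^{\Bhat,u,s}) \leq F(\gamma)$, and conversely $s \leq F(\gamma)$ makes $\gamma$ an admissible competitor in the infimum. Part (a) then reduces to left-continuity of the quantile at rational $s$: with $\sigma = \sup_{r \in \bbQ,\, r < s} \cdfpi^{\Bhat,u,r}$, applying \eqref{eq:key-sym} to each rational $r < s$ yields $r \leq F(\sigma)$ and hence $s \leq F(\sigma)$, which forces $\cdfpi^{\Bhat,u,s} \preceq \sigma$; the reverse inequality is monotonicity. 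Extending parts (c) and (d) from rational to arbitrary $s \in [0,1]$ is then a routine density-and-monotonicity argument from the definition \eqref{eq:inf-formula}. The Skorokhod statement in part (b) follows because $s \mapsto \cdfpi^{\Bhat,u,s}$ is monotone with left-continuity built into \eqref{eq:inf-formula} and right limits realized as infima over rational $r > s$; closedness of $\Geo_u^\w$ in $\bbX_u$ gives the $\Skor([0,1],\Geo_u^\w)$ refinement. Finally, shift covariance in part (e) is a direct transport of $\RI_{u+z}^{\w} = z + \RI_u^{T_z\w}$, $\cgeod{}^{\Bhat,u+z}(\w) = z + \cgeod{}^{\Bhat,u}(T_z\w)$, and the corresponding equivariance of $\nu^\w$ through the infimum formula.

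\medskip
\noindent The main obstacle is the passage from the countable $\RI_u^\w$ to the full $\Geo_u^\w$ in part (c); this is where the countable choice made for measurability could in principle distort the quantile value. The argument rests crucially on the right-density of $\RI_u^\w$ in the compact totally-ordered tree $\Geo_u^\w$ and on the right-continuity of $F$ along decreasing path sequences, so that no mass is lost in the approximation. Once this is in hand, the remaining assertions are standard quantile-function manipulations transplanted to the path-space setting.
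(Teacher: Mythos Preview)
Your proposal is correct and follows essentially the same approach as the paper: both arguments hinge on the right-density of $\RI_u^\w$ in $\Geo_u^\w$ together with the right-continuity of $\gamma\mapsto\nu^\w(\cgeod{}^{\Bhat,u}\preceq\gamma)$ (continuity of measure along decreasing sequences) to upgrade the countable-skeleton infimum to the full-tree infimum, then derive the quantile identity \eqref{eq:key-sym}, left-continuity at rationals, and the extension to all $s$ by monotonicity and density. The only cosmetic difference is that the paper establishes attainment for $\widetilde{\cdfpi}^s=\inf\{\gamma\in\Geo_u^\w:F(\gamma)\ge s\}$ first and then matches it with $\cdfpi^{\Bhat,u,s}$, whereas you approximate an arbitrary admissible $\gamma$ directly; for part~(e) the paper works through \eqref{eq:fullinf} and $\That$-covariance rather than through $\RI$, but either route is valid.
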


\begin{proof}
First note that if $r < s$ and both are rational, then using definition \eqref{eq:inf-formula-rational}, we have $\cdfpi^{\Bhat,u,r} \preceq \cdfpi^{\Bhat,u,s}$.  Monotonicity and the fact that $\Geo_u^\w$ has the least upper bound and greatest lower bound properties imply the existence of left and right limits of these paths, which lie in $\Geo_u^\w\subset \bbX_u$.

We check that \eqref{eq:fullinf} holds for rational $s$, with the definition in \eqref{eq:inf-formula-rational}. 
For such $s$, we have
\[
\cdfpi^{\Bhat,u,s} = \inf\bigl\{\rho\in\RI_u^\w:\nu^\w(\cgeod{}^{\Bhat,u}\preceq \rho) \geq s\}  \succeq \inf\bigl\{\gamma\in\Geo_u^\w:\nu^\w(\cgeod{}^{\Bhat,u}\preceq \gamma)\geq s\bigr\} = \widetilde{\cdfpi}^{s},
\]
because the set in the infimum on the left is a subset of the one on the right. Continuity of probability implies that 
\begin{align}\label{auxi}
\nu^\w(\cgeod{}^{\Bhat,u} \preceq \widetilde{\cdfpi}^{s}) \geq s.
\end{align}
Either $\widetilde{\cdfpi}^{s} \in \RI_u^\w$, in which case we have $\widetilde{\cdfpi}^{s} = \cdfpi^{\Bhat,u,s}$, or there exists a sequence $\rho^n\in \RI_u^\w$ with $\rho^n \searrow \widetilde{\cdfpi}^{s}$. But then we must have $\nu^\w(\cgeod{}^{\Bhat,u} \preceq \rho^n) \geq s$ by monotonicity, which implies that $\cdfpi^{\Bhat,u,s} \preceq\inf\{\rho^n : n \in \N\} = \widetilde{\cdfpi}^{s}$ and again $\widetilde{\cdfpi}^{s}=\cdfpi^{\Bhat,u,s}$.

Next, observe that if $s$ is any number in $[0,1]$ for which \eqref{eq:fullinf} holds, then for all $\gamma\in\Geo_u^\w$
\begin{align}
\cdfpi^{\Bhat,u,s} \preceq \gamma \Longleftrightarrow s \leq \nu^\w(\cgeod{}^{\Bhat,u}\preceq \gamma).\label{eq:fullinfproof}
\end{align}
$\Leftarrow$ comes from \eqref{eq:fullinf} and $\Rightarrow$ comes from \eqref{auxi} and that $\widetilde{\cdfpi}^{s}=\cdfpi^{\Bhat,u,s}_{0:\infty}$. In particular, since we showed above that \eqref{eq:fullinf} holds for all rational $s\in[0,1]$, we now know that \eqref{eq:fullinfproof} holds for all such $s$.

With this observation in mind, we check that the process $\cdfpi^{\Bhat,u,\aabullet}$ is well-defined, i.e., that with the definition in \eqref{eq:inf-formula-rational}, we have left-continuity over the rational $s\in[0,1]$. For $s\in [0,1]$, call
\[
\overline{\cdfpi}^{s} = \sup\bigl\{\cdfpi^{\Bhat,u,r} : r<s , r \in \bbQ\cap[0,1]\bigr\}.
\] 
The monotonicity observed at the beginning of the proof implies that if $s\in[0,1]$ is rational, then $\overline{\cdfpi}^s\preceq \cdfpi^{\Bhat,u,s}$. On the other hand,  from $\nu^\w(\cgeod{}^{\Bhat,u} \preceq \overline{\cdfpi}^s)\geq \nu^\w(\cgeod{}^{\Bhat,u} \preceq \cdfpi^{\Bhat,u,r}) \geq r$ for all rational $r$ with $r<s$,  we see that $\nu^\w(\cgeod{}^{\Bhat,u} \preceq \overline{\cdfpi}^s) \geq s$, which implies $\cdfpi^{\Bhat,u,s} \preceq \overline{\cdfpi}^s$ by \eqref{eq:fullinfproof}. 
 This is left-continuity on the rationals and part \eqref{lm:xBhatproc.defined} follows. 

The definition \eqref{eq:inf-formula} is left-continuous with right limits by construction and monotonicity on the rationals. $\kS$-measurability then follows from the fact that the path $\cdfpi^{\Bhat,u,\aabullet}$ is determined by the values of $\cdfpi^{\Bhat,u,s}$ defined according to \eqref{eq:inf-formula-rational} for $s \in \bbQ \cap [0,1]$, the $\kSfor_u$-measurability of the paths in $\RI_u^\w$, and the $\kS$-measurability of $\nu^\w$.
Part \eqref{lm:xBhatproc.meas} is proved.

Next, we show that \eqref{eq:fullinf} holds also for irrational $s$. For $s$ irrational, we denote the infimum in \eqref{eq:fullinf} by $\widetilde{\cdfpi}^s$, similarly to what was done above. Arguing exactly as above, by continuity of probability, we have $\nu^\w(\cgeod{}^{\Bhat,u} \preceq \widetilde{\cdfpi}^s) \geq s$. Since we already proved \eqref{eq:fullinf}  for rationals, we get that for all $r$ rational with $r<s$, $\cdfpi^{\Bhat,u,r} \preceq \widetilde{\cdfpi}^s$. Then by \eqref{eq:inf-formula}, $\cdfpi^{\Bhat,u,s} \preceq \widetilde{\cdfpi}^s$. 

On the other hand, we already showed that $\nu^\w(\cgeod{}^{\Bhat,u} \preceq \cdfpi^{\Bhat,u,r}) \geq r$ holds for all rational $r\in[0,1]$. Hence, $\nu^\w(\cgeod{}^{\Bhat,u} \preceq \cdfpi^{\Bhat,u,s}) \geq \nu^\w(\cgeod{}^{\Bhat,u} \preceq \cdfpi^{\Bhat,u,r}) \geq r$ for all rational $r<s$, which implies that $\nu^\w(\cgeod{}^{\Bhat,u} \preceq \cdfpi^{\Bhat,u,s})\geq s$ and so the reverse inequality $\cdfpi^{\Bhat,u,s} \succeq \widetilde{\cdfpi}^s$ also holds. Part \eqref{lm:xBhatproc.inf} is proved, which in turn implies that \eqref{eq:fullinfproof} holds for all $s\in[0,1]$ and proves part \eqref{lm:xBhatproc.key}.

We verify \eqref{lm:xBhatproc:cov} using \eqref{eq:fullinf} for each $s$. First, we note that by definition, $\gamma \in \Geo_u^\w$ if and only if $z + \gamma\in \Geo_{u+z}^{T_{-z}\w}$, where the addition is understood as the translation $(z + \gamma)_i=z + \gamma_i$. Similarly, for $\gamma\in\bbX_{u+z}$, the shift covariance of $\Bhat$ in \eqref{covariance} implies that $\That_{-z}\{\what : \cgeod{}^{\Bhat,u+z}(\what) \preceq \gamma\} = \{\what : \cgeod{}^{\Bhat,u}(\what) \preceq \gamma-z\}$. Changing variables by  $\gamma' = \gamma-z$, we deduce part \eqref{lm:xBhatproc:cov}:
\begin{align*}
\cdfpi^{\Bhat,u+z,s}(\That_{-z}\what) &= \inf\bigl\{\gamma\in\Geo_{u+z}^{\w(\That_{-z}\what)}:\nu^{\w(\That_{-z}\what)}(\cgeod{}^{\Bhat,u+z}\preceq \gamma) \geq s\bigr\} \\
&=  z + \inf\bigl\{\gamma'\in\Geo_{u}^{\w(\what)}:\nu^{\w(\what)}(\cgeod{}^{\Bhat,u}\preceq \gamma') \geq s\bigr\} = z + \cdfpi^{\Bhat,u,s}(\what).
\qedhere \end{align*}
\end{proof}

Note that there is no dependence in $\cdfpi^{\Bhat,u,s}$ on $\what$ through the superscript $\Bhat$. The $\Bhat$ in the superscript is just to remind us that if we use a different cocycle we get a different path. Due to the $\kS$-measurability in Lemma \ref{lm:xBhatproc}\eqref{lm:xBhatproc.meas}, instead of $\cdfpi^{\Bhat,u,s}_{0:\infty}(\what)$ we write $\cdfpi^{\Bhat,u,s}(\w(\what))$ and frequently simplify it further to $\cdfpi^{\Bhat,u,s}(\w)$ because now these paths can be regarded as functions of either  $\what\in\Omhat$ or $\w\in\Omega$.

We illustrate the definition with a simple case. This example is not referred to in the sequel. 

\begin{example} 
Suppose that  the  conditional distribution $\nu^\w\{\xhat^{u,\Bhat}_{0:\infty}\in\acdot\tspa\}$ is supported on finitely many distinct geodesics $\pi^1 \precneq  \pi^2 \precneq \dotsm \precneq\pi^m$ with $a_k=\nu^\w\{\cgeod{}^{\Bhat,u}=\pi^k\}$, $b_0=0$,  and $b_k=a_1+\dotsm+a_k$, for $k\in\{1,\dotsc,m\}$. Then
\[   \nu^\w\{\cgeod{}^{\Bhat,u}\preceq \gamma\}= 
\begin{cases}   0,  &\text{if }\gamma \precneq \pi^{1},   \\  
b_k, &\text{if }\pi^k \preceq \gamma \precneq \pi^{k+1}\text{ for }1\le k<m,\text{ and}\\
1, &\text{if }\gamma \succeq \pi^{m}_{0:\infty}.   \end{cases} 
\] 
Thus, for $s \in [0,1]$,
\[
\cdfpi^{\Bhat,u,s}= \inf\bigl\{\gamma \in \Geo_u^\w : \nu^\w(\cgeod{}^{\Bhat,u} \preceq\gamma)\geq s\bigr\} = \begin{cases}
u + \Z_+ e_2&\text{if }s = 0\text{ and}\\
\pi^{k} &\text{if }b_{k-1} < s \leq b_k\text{ for }k \in\{1,\dots,m\}. 
\end{cases}\qedhere
\]
\end{example}

Recall that $\leb$ is the Lebesgue measure on $[0,1]$, but we write integrals with respect to this measure using the standard notation $\int_0^1 f(s)\,ds$.

\begin{lemma}\label{lm:Fubini-new}
Fix $\Bhat\in \cKhat$ and $u\in\Z^2$
 and define $\cdfpi^{\Bhat,u,\aabullet}$ as in Lemma \ref{lm:xBhatproc}. Equip $\Omega \times\bbX_u$  with the product Borel $\sigma$-algebra. The distribution on $\Omega\times\bbX_u$  of $(\w(\what),\cgeod{}^{\Bhat,u}(\what))$ under $\Phat(d\what)$ is the same as the distribution of
$(\w,\cdfpi^{\Bhat,u,s}(\w))$ under $\P(d\w)\otimes\leb(ds)$.
\end{lemma}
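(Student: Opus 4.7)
The plan is to compare the conditional distributions of the second coordinate given the first under the two joint measures on $\Omega\times\bbX_u$ and deduce their equality from the quantile identity in Lemma~\ref{lm:xBhatproc}\eqref{lm:xBhatproc.key}. Both joint distributions have first marginal $\P$: trivially for $(\w(\what),\cgeod{}^{\Bhat,u}(\what))$ under $\Phat$, and for $(\w,\cdfpi^{\Bhat,u,s}(\w))$ under $\P\otimes\leb$ by integrating out $s\in[0,1]$. It therefore suffices, by disintegration, to show that the two conditional distributions of the second coordinate given $\w$ are $\P$-a.s.\ equal.

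Under $\Phat$, this conditional distribution is $\nu^\w$ by the defining property of the regular conditional probability. Under $\P\otimes\leb$, it is the pushforward $\mu^\w$ of Lebesgue measure on $[0,1]$ by the map $s\mapsto\cdfpi^{\Bhat,u,s}(\w)$. The random-geodesic assumption on $\cgeod{}^{\Bhat,u}$ and Lemma~\ref{lm:xBhatproc}\eqref{lm:xBhatproc.meas} imply that, for $\P$-almost every $\w$, both $\nu^\w$ and $\mu^\w$ are concentrated on the compact totally-ordered set $\Geo_u^\w\subset\bbX_u$.

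For every $\gamma\in\Geo_u^\w$, Lemma~\ref{lm:xBhatproc}\eqref{lm:xBhatproc.key} applied at the fixed weight $\w$ yields
\[
\mu^\w\{\sigma:\sigma\preceq\gamma\}
=\leb\bigl\{s\in[0,1]:\cdfpi^{\Bhat,u,s}(\w)\preceq\gamma\bigr\}
=\nu^\w\{\sigma:\sigma\preceq\gamma\},
\]
so $\mu^\w$ and $\nu^\w$ agree on every order tail. Since $\Geo_u^\w$ is compact and totally ordered with a closed order relation, every cylinder event $\{\sigma_{m:n}=\tau\}$ intersected with $\Geo_u^\w$ is an order interval $[\lambda_\tau,\rho_\tau]$ with $\lambda_\tau\in\LI_u^\w$ and $\rho_\tau\in\RI_u^\w$ by \eqref{eq:isodense}, so its measure under either $\mu^\w$ or $\nu^\w$ is determined by tail values at the countable family $\RI_u^\w\cup\LI_u^\w$. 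A standard $\pi$-$\lambda$ argument then gives $\mu^\w=\nu^\w$ on the Borel $\sigma$-algebra of $\bbX_u$, proving the lemma.

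The step that requires the most care is the last one: translating agreement on order tails into equality on all Borel sets of $\bbX_u$. Everything else is a direct substitution using the quantile identity \eqref{eq:key-sym}; this is the payoff of having constructed $\cdfpi^{\Bhat,u,\aabullet}$ as an inverse-CDF process in \eqref{eq:inf-formula-rational}--\eqref{eq:inf-formula}.
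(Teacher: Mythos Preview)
Your proof is correct and follows essentially the same route as the paper's. Both arguments reduce to checking equality on order tails $\{\sigma:\sigma\preceq\gamma\}$, invoke the quantile identity \eqref{eq:key-sym} from Lemma~\ref{lm:xBhatproc}\eqref{lm:xBhatproc.key} for the core computation, and finish with a $\pi$--$\lambda$ argument. The only cosmetic difference is where the generating-class argument lives: the paper works on all of $\bbX_u$, uses Lemma~\ref{lem:gen} to see that tails $\{\gamma\preceq\pi\}$ for $\pi\in\bbX_u$ generate $\sB(\bbX_u)$, and then projects each $\pi$ onto $\gamma^\w=\sup\{\lambda\in\LI_u^\w:\lambda\preceq\pi\}\in\Geo_u^\w$ so that \eqref{eq:key-sym} applies; you instead restrict from the start to $\Geo_u^\w$ (where both measures live) and observe that cylinder sets are closed order intervals whose endpoints lie in $\LI_u^\w$ and $\RI_u^\w$. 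Your interval argument is fine, though it could be spelled out that the strict tail $\{\sigma\precneq\lambda_\tau\}$ coincides with a closed tail at $\sup\{\gamma:\gamma\precneq\lambda_\tau\}$ precisely because $\lambda_\tau$ is left-isolated.
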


\begin{proof}
To prove the lemma, it suffices to show that for all $A
\in \sB(\bbX_u)$ and $V\in \sF$, 
\[
\Phat(\w \in V, \, \cgeod{}^{\Bhat,u}\in A) = \int_0^1\bbP(\cdfpi^{\Bhat,u,s}\in A, V)\,ds.
\]
For this to hold, it is sufficient to consider $A$ of the form $A = \{\gamma \in \bbX_u : \gamma \preceq \pi\}$ for fixed $\pi\in\bbX_u$ because the collection of sets of this form is closed under intersections and generates $\sB(\bbX_u)$. See Lemma \ref{lem:gen}.

Fix $\pi \in \bbX_u$. We then have
\begin{align*}
\Phat(\w \in V,\, \cgeod{}^{\Bhat,u} \preceq \pi) &= \Ehat\bigl[\nu^\w(\cgeod{}^{\Bhat,u}\preceq\pi)\one_V(\w)\bigr] = \E\bigl[\nu^\w(\cgeod{}^{\Bhat,u}\preceq \pi)\one_V(\w)\bigr].
\end{align*}
Call $\gamma^\w = \sup\{\lambda\in \LI_u^\w : \lambda \preceq \pi\}\in\Geo_u^\w$. Then $\gamma^\w$ is 
 $\kSfor_u$-measurable and the set inside the supremum is non-empty (as it always contains $u + \bbZ_+e_2$). Moreover, for any $\rho\in \Geo_u^\w$, $\rho \preceq \gamma^\w$ if and only if $\rho\preceq \pi$. 
 
 Working on the $\P$-almost sure event where $\nu^\w(\cgeod{}^{\Bhat,u}\in\Geo_u^\w)=1$, we may write 
\[\nu^\w(\cgeod{}^{\Bhat,u}\preceq \pi) = \nu^\w(\cgeod{}^{\Bhat,u}\preceq \gamma^\w).\]

By Lemma \ref{lm:xBhatproc}\eqref{lm:xBhatproc.key} and the Fubini-Tonelli theorem, we have
\begin{align*}
\E\bigl[\nu^\w(\cgeod{}^{\Bhat,u}\preceq \pi)\one_V(\w)\bigr] &= \E\bigl[\nu^\w(\cgeod{}^{\Bhat,u}\preceq \gamma^\w)\one_V(\w)\bigr] =\E\Bigl[\int_0^1 \one_V(\w)\one\{\cdfpi^{\Bhat,u,s}\preceq \gamma^\w\}\,ds\Bigr] \\
&= \int_0^1 \P(\cdfpi^{\Bhat,u,s} \preceq \gamma^\w,V)\,ds.
\end{align*}
For each $s$, restricting to the $\P$-almost sure event on which $\cdfpi^{\Bhat,u,s} \in \Geo_u^\w$, this last expression is equal to
\[
\int_0^1 \P(\cdfpi^{\Bhat,u,s} \preceq \pi,V)\,ds.
\]
The result now follows.
\end{proof}

Next we narrow the assumptions to include coalescence of $\Bhat$-geodesics. In the statement of the next result, for $u,v\in\bbZ^2$, we say that $(\pi,\gamma)\in\Geo_u^\w\times \Geo_v^\w$ is a \textit{coalescing pair} if $\pi \coal \gamma.$

\begin{lemma}\label{lem:coal-equiv}
Fix $\Bhat \in \cKhatcoal$. Then with $\cdfpi^{\Bhat,u,\aabullet}$ and $\cdfpi^{\Bhat,v,\aabullet}$ defined as in Lemma \ref{lm:xBhatproc}, for $\P$-almost every $\w$, for any $u,v\in\Z^2$, and for all coalescing pairs $(\pi,\gamma)\in\Geo_u^\w\times\Geo_v^\w$, 
\be
\bigl\{s\in[0,1]:\cdfpi^{\Bhat,u,s}(\w)\preceq \pi\bigr\} = \bigl\{s\in[0,1]:\cdfpi^{\Bhat,v,s}(\w)\preceq \gamma\bigr\}.\label{eq:coal-equiv}\qedhere
\ee
\end{lemma}

\begin{proof}
By $\Bhat\in\cKhatcoal$ and Fubini's theorem, the event $\Omhat_1=\{\what:\nu^\w(\cgeod{}^{\Bhat,u}\coal \cgeod{}^{\Bhat,v})=1\}$ has $\Phat$-probability one.
Let $\Omhat_2$ be the intersection of $\Omhat_1$ with the full $\Phat$-probability event in Lemma \ref{lm:xBhatproc}\eqref{lm:xBhatproc.key}.
Take $\w\in\Omhat_2$. Then for any $u,v\in\Z^2$, $(\pi,\gamma)\in\Geo_u^\w$ such that $\pi\coal\gamma$, 
we have
\begin{align*}
 \nu^\w(\cgeod{}^{\Bhat,v}\preceq\gamma)-\nu^\w(\cgeod{}^{\Bhat,u}\preceq\pi)
 &\le\nu^\w(\cgeod{}^{\Bhat,u} \succneq \pi,\, \cgeod{}^{\Bhat,v} \preceq \gamma) \leq 1 - \nu^\w(\cgeod{}^{\Bhat,u}\coal \cgeod{}^{\Bhat,v})=0
\end{align*}
and
\begin{align*}
 \nu^\w(\cgeod{}^{\Bhat,u}\preceq\pi)-\nu^\w(\cgeod{}^{\Bhat,v}\preceq\gamma)
 &\le\nu^\w(\cgeod{}^{\Bhat,u} \preceq \pi,\, \cgeod{}^{\Bhat,v} \succneq \gamma) \leq 1 - \nu^\w(\cgeod{}^{\Bhat,u}\coal \cgeod{}^{\Bhat,v})=0.
\end{align*}
Thus, $\nu^\w(\cgeod{}^{\Bhat,u} \preceq \pi)=\nu^\w(\cgeod{}^{\Bhat,v}\preceq \gamma)$. Lemma \ref{lm:xBhatproc}\eqref{lm:xBhatproc.key} then gives \eqref{eq:coal-equiv} for all $\what\in\Omhat_2$ and coalescing $\pi\in\Geo_u^\w$ and $\gamma\in\Geo_v^\w$.
\end{proof}

\begin{lemma}\label{lm:coal}
Fix $\Bhat \in \cKhatcoal$. For all $u,v \in \bbZ^2$, with $\cdfpi^{\Bhat,u,\aabullet}$ and $\cdfpi^{\Bhat,v,\aabullet}$ defined as in Lemma \ref{lm:xBhatproc},
\[
\int_0^1 \P(\cdfpi^{\Bhat,u,s}\coal \cdfpi^{\Bhat,v,s})\,ds=1. \qedhere
\]
\end{lemma}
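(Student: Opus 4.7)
The plan is to use Fubini to reduce the assertion to a pathwise‑in‑$\w$ claim and then argue $\w$‑by‑$\w$ using uniqueness of coalescing partners together with Lemma~\ref{lem:coal-equiv}. By Fubini and the $\kS$‑measurability from Lemma~\ref{lm:xBhatproc}\eqref{lm:xBhatproc.meas},
\[
\int_0^1\P\bigl(\cdfpi^{\Bhat,u,s}\coal\cdfpi^{\Bhat,v,s}\bigr)\,ds
=\E\biggl[\int_0^1\one\bigl\{\cdfpi^{\Bhat,u,s}(\w)\coal\cdfpi^{\Bhat,v,s}(\w)\bigr\}\,ds\biggr],
\]
so it suffices to show that for $\P$‑a.e.\ $\w$, Lebesgue‑a.e.\ $s\in[0,1]$ satisfies $\cdfpi^{\Bhat,u,s}(\w)\coal\cdfpi^{\Bhat,v,s}(\w)$. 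Fix $\w$ in the intersection of the full‑measure events produced by Lemmas~\ref{lm:xBhatproc},~\ref{lm:Fubini-new}, and~\ref{lem:coal-equiv}, and on which $\nu^\w(\cgeod{}^{\Bhat,u}\coal\cgeod{}^{\Bhat,v})=1$.

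Next, I would exploit the tree structure on locally‑rightmost rays. The uniqueness of finite rightmost geodesics forces each $\pi\in\Geo_u^\w$ to have at most one coalescing partner in $\Geo_v^\w$; write $\tau_v(\pi)$ for that partner when it exists, let $T_u^\w\subset\Geo_u^\w$ be its domain, and define $\tau_u$ and $T_v^\w$ symmetrically. Planar non‑crossing combined with \eqref{G0.i}--\eqref{G0.iii} makes $\tau_v$ monotone in $\preceq$: if $\pi_1\preceq\pi_2$ both lie in $T_u^\w$, then $\pi_i$ eventually equals $\tau_v(\pi_i)$, so $\tau_v(\pi_1)\apreceq\tau_v(\pi_2)$ and thus $\tau_v(\pi_1)\preceq\tau_v(\pi_2)$. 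The $\nu^\w$‑coalescence assumption places $\cgeod{}^{\Bhat,u}\in T_u^\w$ and $\cgeod{}^{\Bhat,v}\in T_v^\w$ $\nu^\w$‑almost surely, so Lemma~\ref{lm:Fubini-new} applied to the product‑measurable events $\{(\w,\pi):\pi\in T_u^\w\}$ and $\{(\w,\gamma):\gamma\in T_v^\w\}$ yields, for Lebesgue‑a.e.\ $s\in[0,1]$, that both $\pi_s:=\cdfpi^{\Bhat,u,s}(\w)$ and $\gamma_s:=\cdfpi^{\Bhat,v,s}(\w)$ admit coalescing partners $\gamma_*:=\tau_v(\pi_s)$ and $\pi_*:=\tau_u(\gamma_s)$.

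To close the loop, I would squeeze $\gamma_s=\gamma_*$. Applying Lemma~\ref{lem:coal-equiv} to the coalescing pair $(\pi_s,\gamma_*)$ and noting that $s$ belongs to $\{s':\cdfpi^{\Bhat,u,s'}\preceq\pi_s\}$ places it in $\{s':\cdfpi^{\Bhat,v,s'}\preceq\gamma_*\}$, giving $\gamma_s\preceq\gamma_*$; the symmetric application to $(\pi_*,\gamma_s)$ yields $\pi_s\preceq\pi_*$. Monotonicity of $\tau_v$ then gives $\gamma_*=\tau_v(\pi_s)\preceq\tau_v(\pi_*)=\gamma_s$ (using $\tau_v(\pi_*)=\gamma_s$, because $\pi_*\coal\gamma_s$), so $\gamma_s=\gamma_*$ and hence $\pi_s\coal\gamma_s$. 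The step that requires the most care is the monotonicity of the partial map $\tau_v$: although it follows cleanly from planar non‑crossing of locally‑rightmost paths and \eqref{G0.i}--\eqref{G0.iii}, it is what powers the final two‑sided squeeze; every other step is a routine manipulation of the quantile identity in Lemma~\ref{lm:xBhatproc}\eqref{lm:xBhatproc.key} and the coalescence equivalence in Lemma~\ref{lem:coal-equiv}.
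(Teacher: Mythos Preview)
Your proof is correct and follows essentially the same route as the paper: reduce by Fubini, invoke Lemma~\ref{lm:Fubini-new} to obtain coalescing partners for $\cdfpi^{\Bhat,u,s}$ and $\cdfpi^{\Bhat,v,s}$ for $\P\otimes\leb$-a.e.\ $(\w,s)$, and then use Lemma~\ref{lem:coal-equiv} to close an ordering argument. The only difference is in the final squeeze. You introduce the coalescing-partner maps $\tau_u,\tau_v$, establish their $\preceq$-monotonicity, and then argue $\gamma_*=\tau_v(\pi_s)\preceq\tau_v(\pi_*)=\gamma_s\preceq\gamma_*$. The paper bypasses this machinery: once $\gamma_s\preceq\gamma$ for some $\gamma\coal\pi_s$, it observes directly that $\gamma_s\apreceq\pi_s$ (since $\gamma$ eventually equals $\pi_s$); the symmetric inequality $\pi_s\apreceq\gamma_s$ then gives coalescence in one stroke. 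Your detour through $\tau_v$ is correct but unnecessary --- the asymptotic ordering $\apreceq$ already encodes what monotonicity of the partner map buys you.
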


\begin{proof}
Fix $u,v\in\Z^2$.  Using Lemma \ref{lm:Fubini-new},
\begin{align*}  
1&=\Phat\{ \tspa \cgeod{}^{\Bhat,u}\coal \cgeod{}^{\Bhat,v} \tspa \} 
\le  \Phat\bigl\{ \what: \tspa \exists \pi \in \Geo^{\w(\what)}_v \text{ s.t. }  \pi \coal \cgeod{}^{\Bhat,u} \tspa \bigr\}   \\
&=\int_0^1 \P\bigl\{ \tspa \w: \tspa \exists \pi \in \Geo^{\w}_v \text{ s.t. }  \pi \coal \cdfpi^{\Bhat,u,s} \tspa \bigr\}\,ds.
\end{align*} 
By symmetry, switching the roles of $u$ and $v$,
\[  1= \int_0^1 \P\bigl\{ \tspa \w: \tspa \exists \pi \in \Geo^{\w}_u \text{ s.t. }  \pi \coal \cdfpi^{\Bhat,v,s} \tspa \bigr\}\,ds. 
\]
Thus there exists a measurable set $D_0\subset\Omega\times(0,1]$ 
such that  $\P\otimes\leb(D_0)=1$ and $ \forall (s,\w)\in D_0$:
\be\label{aux425}  
 \exists \gamma \in \Geo^{\w}_v \text{ s.t. }  \gamma \coal \cdfpi^{\Bhat,u,s}
\quad\text{and}\quad 
\exists \gamma' \in \Geo^{\w}_u \text{ s.t. }  \gamma' \coal \cdfpi^{\Bhat,v,s}. 
\ee

By Lemma \ref{lem:coal-equiv}, there exists an event $\Omega_0\in\kS$ with $\P(\Omega_0)=1$ and on which \eqref{eq:coal-equiv} holds for all coalescing pairs  $(\pi,\gamma) \in \Geo_u^\w\times \Geo_v^\w$.
We claim that for $(s,\w)\in D_0\cap(\Omega_0\times(0,1])$, $\cdfpi^{\Bhat,u,s}(\w)\coal\cdfpi^{\Bhat,v,s}(\w)$. 

By \eqref{aux425} there is a coalescing pair $(\cdfpi^{\Bhat,u,s}, \gamma)\in\Geo^{\w}_u\times\Geo^{\w}_v$. Then by \eqref{eq:coal-equiv},   $\cdfpi^{\Bhat,v,s}\preceq \gamma$.  Thus $\cdfpi^{\Bhat,v,s}\apreceq \cdfpi^{\Bhat,u,s}$. A symmetric argument gives  $\cdfpi^{\Bhat,u,s}\apreceq \cdfpi^{\Bhat,v,s}$. Hence, the two  coalesce.
\end{proof}

Because geodesics proceed up-right in directed last-passage percolation, it is natural  to expect that a shift-covariant family of coalescing geodesics will be measurable with respect to the weights ahead of the root.   
The next result records this fact if the weights are i.i.d. We state this result on $\Omega$. An analogous statement holds on $\Omhat$ with $\sFfor_u$ replaced by $\kSfor_u$ if the geodesics are $\kS$ measurable.

\begin{lemma}\label{lm:pi-forward} 
Let $\pi^u:\Omega\to\pathsp_u$, $u\in\Z^2$, be measurable maps such that $\pi^u\in\Geo_u^\w$ $\P$-almost surely. Suppose the family $(\pi^u)_{u\in\Z^2}$ is shift-covariant, 
\[
\P\{\pi^u=u+\pi^0\circ T_u\}=1 \quad \text{for all } u\in\Z^2,
\]
and that $\pi^u\coal\pi^v$ holds with probability one for all $u,v\in\Z^2$. Then each $\pi^u$ is $\sFfor_u$-measurable up to $\P$-null sets.
\end{lemma}

\begin{proof}
    Recall that we assumed the weights are i.i.d.\ in \eqref{iid}. Let $\overline\P(d\w, d\wt\w)$ be the probability measure on $\Omega^2$ that couples two copies of $\P$ as follows: $\wt\w_x = \w_x$ if $x \in \bbZ_+^2$ and $(\wt \w_x : x \notin \bbZ_+^2)$ is independent of $(\w_x : x \notin \bbZ_+^2)$.

   For 
   every $(\w,\wt\w)$, $\Geo_0^\w=\Geo_0^{\wt\w}$ and therefore $\pi^0(\w)\preceq\pi^0(\wt\w)$ or $\pi^0(\w)\succeq\pi^0(\wt\w)$. We claim that both hold almost surely.
    
    To derive a contradiction, assume that $\overline\P\{\pi^0(\w)\ne\pi^0(\wtil)\}>0$. By symmetry 
    \[\overline\P\{\pi^0(\w)\precneq\pi^0(\wt\w)\}=\overline\P\{\pi^0(\w)\succneq\pi^0(\wt\w)\}>0.\]
      Denote the two events in the above display by $A$ and $A'$. 
    By the ergodic theorem, for $\overline\P$-almost every $(\w,\wt\w)$, $T_{ke_1}(\w,\wt\w)\in A$ occurs for infinitely many integers $k$ and $T_{\ell e_1}(\w,\wt\w)\in A'$ occurs for infinitely many integers $\ell$. Thus, for $\overline\P$-almost every $(\w,\wt\w)$, there exist integers $k<\ell$ such that $T_{ke_1}(\w,\wt\w)\in A$ and $T_{\ell e_1}(\w,\wt\w)\in A'$. This implies that 
    \[\pi^{ke_1}(\w)\precneq\pi^{ke_1}(\wt\w)\coal\pi^{\ell e_1}(\wt\w)\precneq\pi^{\ell e_1}(\w).
    \]
   These inequalities prevent the coalescence
      $\pi^{ke_1}(\w)\coal\pi^{\ell e_1}(\w)$, thereby contradicting the assumption that $\{\pi^u:u\in\Z^2\}$ is a coalescing system of semi-infinite geodesics. Thus $\pi^0(\w) = \pi^0(\wt\w)$ almost surely. Standard measure theory (e.g.~ Lemma A.2 in \cite{Kur-07})
       implies   the existance of a $\sigma(\w_x : x \in \bbZ_+^2)$-measurable function $F:\Omega\to\bbX_0$ such that $\pi^0(\w) = F(\w)$ $\bbP$-almost surely.
\end{proof}

Recall definition \eqref{B-gen4} of the Busemann function $\Busgeo_\pi$. For  $\w$ in the full $\P$-probability event from Theorem \ref{thm:g400} on which Busemann functions are well-defined and for  $s\in(0,1]$, define 
	\begin{align}\label{B-def}
	\Buss^s(\w,x,y)=\Busgeo_{\cdfpi^{\Bhat,0,s}(\w)}(\w,x,y).
	\end{align}

\begin{lemma}\label{lm:B-cK}
Fix $\Bhat\in\cKhat$. 
Suppose $s\in(0,1]$ satisfies 
	\begin{align}\label{s-choice}
	\P\bigl( \forall u,v\in\Z^2:\cdfpi^{\Bhat,u,s}\coal \cdfpi^{\Bhat,v,s}\tspa\bigr) =1.
	\end{align}
Then $\Buss^s$, defined by \eqref{B-def}, is a shift-covariant recovering cocycle. For any $u\in\Z^2$ and $\{x,y\}\subset u+\Z^2_+$, $\Buss^s(x,y)$ is $\sFfor_u$-measurable, up to events of zero $\bbP$-measure.
\end{lemma}

\begin{proof}
That $\Buss^s$ is a recovering cocycle comes from Theorem \ref{thm:g400}. The shift-covariance \eqref{x-cov} and the coalescence \eqref{s-choice} allow to apply Lemma \ref{lm:pi-forward} to the system $\{\cdfpi^{\Bhat,u,s}:u\in\Z^2\}$ to conclude that  $\cdfpi^{\Bhat,u,s}$ is $\kSfor_u$-measurable, for all $u\in\Z^2$.  
Take $u\in\Z^2$ and $x,y\in u+\Z^2_+$. Then $\Lpp_{x,\cdfpi_n^{\Bhat,u,s}}$ and $\Lpp_{y,\cdfpi_n^{\Bhat,u,s}}$ are both $\kSfor_u$-measurable (for $n\ge u\cdot(e_1+e_2)$) and, consequently, so is $\Busgeo_{\cdfpi^{\Bhat,u,s}(\w)}(\w,x,y)$.
Lemma \ref{lm:B-order} and the coalescence  $\cdfpi^{\Bhat,u,s}\coal\cdfpi^{\Bhat,0,s}$ implies
    \begin{align}\label{B_u}
    \Buss^s(\w,x,y)=\Busgeo_{\cdfpi^{\Bhat,u,s}(\w)}(\w,x,y),
    \end{align}
for all $x,y,u\in\Z^2$.
Thus, we see that $\Buss^s(\w,x,y)$ is $\sFfor_u$-measurable, for all $x,y\in u+\Z^2_+$.

\eqref{B_u} also implies the shift-covariance:
	\begin{align*}
	\Buss^s(\w,x+z,y+z)
	&=\Busgeo_{\cdfpi^{\Bhat,z,s}(\w)}(\w,x+z,y+z)=\Busgeo_{z+\cdfpi^{\Bhat,0,s}(T_z\w)}(\w,x+z,y+z)\\
	&=\Busgeo_{\cdfpi^{\Bhat,0,s}(T_z\w)}(T_z\w,x,y)=\Buss^s(T_z\w,x,y),
	\end{align*}
where the first equality used \eqref{B_u} with $u=z$, the second equality used the shift-covariance \eqref{x-cov}, and the third equality used the shift-covariance \eqref{B-cov}. 
\end{proof}

Lemma \ref{lm:g428} gives the following: 

\begin{lemma}\label{lm:xhat->Bhat}
Take $\Bhat\in\cKhatcoal$. Then for $\Phat$-almost every $\what$ and all $u,x,y\in\Z^2$
	\be\begin{aligned}\label{xhat->Bhat}
	\Busgeo_{\cgeod{}^{\Bhat,u}(\what)}(\w(\what),x,y)=\Bhat(\what,x,y).
	\end{aligned}\qedhere\ee
\end{lemma}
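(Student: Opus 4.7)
The plan is to use the cocycle property of $\Bhat$ to rewrite $\Bhat(\what,x,y)$ as a telescoping difference $\Bhat(x,\pi_n)-\Bhat(y,\pi_n)$ along the $\Bhat$-geodesic $\pi=\cgeod{}^{\Bhat,u}(\what)$, identify each increment with the point-to-point last-passage time $\Lpp_{x,\pi_n}$ (respectively $\Lpp_{y,\pi_n}$) for all sufficiently large $n$, and then pass to the limit using the definition \eqref{B-gen4} of $\Busgeo_\pi$.

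First I would restrict to a shift-invariant event $\Omhat_0\in\kShat$ of full $\Phat$-probability on which, for every $v\in\Z^2$: (i) $\Bhat$ is a recovering cocycle; (ii) the family $\{\cgeod{}^{\Bhat,v}:v\in\Z^2\}$ is coalescing, from $\Bhat\in\cKhatcoal$; (iii) $\cgeod{}^{\Bhat,v}$ is a non-trivial locally-rightmost semi-infinite geodesic in the weights $\w(\what)$, by \eqref{no e2} and the remarks preceding it; and (iv) the Busemann limit \eqref{B-gen4} along $\cgeod{}^{\Bhat,v}$ exists and defines a recovering cocycle, by Theorem \ref{thm:g400}. Such an event exists because $\Z^2$ is countable. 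Fix $\what\in\Omhat_0$ and $u,x,y\in\Z^2$, and set $\pi=\cgeod{}^{\Bhat,u}(\what)$. Because $\pi$ is non-trivial, $\pi_n\cdot e_i\to\infty$ for $i\in\{1,2\}$, so $x\preceq\pi_n$ and $y\preceq\pi_n$ for all $n$ sufficiently large, whence the cocycle property gives
\[
\Bhat(\what,x,y)=\Bhat(\what,x,\pi_n)-\Bhat(\what,y,\pi_n).
\]

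The central step is the identity $\Bhat(\what,x,\pi_n)=\Lpp_{x,\pi_n}(\w(\what))$ for all $n$ sufficiently large (and the identical argument at $y$). By coalescence, select a vertex $z=z(\what,x,u)$ at which $\cgeod{}^{\Bhat,x}(\what)\coal\pi$. For $n$ large enough that $\pi_n$ lies on $\cgeod{}^{\Bhat,x}(\what)$ past $z$, the initial segment of the locally-rightmost geodesic $\cgeod{}^{\Bhat,x}(\what)$ that ends at $\pi_n$ is a finite geodesic from $x$ to $\pi_n$ which passes through $z$; splitting it at $z$ gives $\Lpp_{x,z}+\Lpp_{z,\pi_n}=\Lpp_{x,\pi_n}$. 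Applying \eqref{By rule} to $\cgeod{}^{\Bhat,x}(\what)$ between $x$ and $z$, and to $\pi$ between $z$ and $\pi_n$, and invoking the cocycle property of $\Bhat$,
\[
\Bhat(\what,x,\pi_n)=\Bhat(x,z)+\Bhat(z,\pi_n)=\Lpp_{x,z}+\Lpp_{z,\pi_n}=\Lpp_{x,\pi_n}.
\]

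Combining this (and the analogous identity at $y$) with the telescoping display and letting $n\to\infty$ yields
\[
\Bhat(\what,x,y)=\lim_{n\to\infty}\bigl(\Lpp_{x,\pi_n}(\w(\what))-\Lpp_{y,\pi_n}(\w(\what))\bigr)=\Busgeo_\pi(\w(\what),x,y),
\]
which is exactly \eqref{xhat->Bhat}. The main subtlety sits in the key step above: the identity $\Bhat(x,\pi_n)=\Lpp_{x,\pi_n}$ uses in an essential way that the auxiliary $\Bhat$-geodesic $\cgeod{}^{\Bhat,x}$ coalesces with $\pi$, which is precisely the content of the hypothesis $\Bhat\in\cKhatcoal$ and is the reason the lemma requires coalescence rather than only the cocycle and recovery properties. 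Everything else is routine bookkeeping with the cocycle, recovery, and by-rule properties.
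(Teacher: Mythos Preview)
Your proof is correct and follows essentially the same idea as the paper's: use coalescence to find a common vertex $z$ on the $\Bhat$-geodesics from $u$ and from $x$ (resp.\ $y$), invoke \eqref{By rule} to identify $\Bhat$ with last-passage values along these geodesics, and conclude via the cocycle property. The only organizational difference is that you pass to the limit in $n$ directly from the definition \eqref{B-gen4}, whereas the paper picks a single $z$ on all three geodesics $\cgeod{}^{\Bhat,u},\cgeod{}^{\Bhat,x},\cgeod{}^{\Bhat,y}$ and uses Lemma~\ref{lm:B-order} (coalescing geodesics generate the same Busemann function) to equate $\Busgeo_{\cgeod{}^{\Bhat,x}}(x,z)$ with $\Busgeo_{\cgeod{}^{\Bhat,u}}(x,z)$; this avoids the explicit limit but is mathematically equivalent to what you wrote.
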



\begin{lemma}\label{lm:Bs-Bhat}
Fix $\Bhat\in\cKhatcoal$. 
The distribution of $(\w(\what),\Bhat(\what))$ under $\Phat(d\what)$ is the same as the distribution of $(\w,\Buss^s(\w))$ under $\P(d\w)\otimes\leb(ds)$.
\end{lemma}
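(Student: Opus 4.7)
The plan is to realize both sides of the claimed distributional equality as images of the same joint distribution under a common measurable map, so that the statement follows directly from Lemma \ref{lm:Fubini-new} applied with $u=0$.

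First, by Lemma \ref{lm:xhat->Bhat} applied at $u=0$, there is a $\Phat$-full probability event on which
\[\Bhat(\what, x, y) = \Busgeo_{\cgeod{}^{\Bhat, 0}(\what)}(\w(\what), x, y) \quad \text{for all } x, y \in \Z^2.\]
In particular, the entire cocycle $\Bhat(\what, \acdot, \acdot)$ is realized as a deterministic measurable functional of the pair $(\w(\what), \cgeod{}^{\Bhat, 0}(\what))$. Define
\[F: \Omega \times \pathsp_0 \to \Omega \times \R^{\Z^2 \times \Z^2}, \qquad F(\w, \pi) = \bigl(\w, \tspa \Busgeo_\pi(\w, \acdot, \acdot)\bigr),\]
where $\Busgeo_\pi(\w,\acdot,\acdot)$ is defined as the pointwise limit in \eqref{B-gen4}. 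By Theorem \ref{thm:g400}, this limit exists, and hence $F$ is measurable, on the set of pairs $(\w, \pi)$ with $\w \in \Omega_0$ and $\pi$ a non-trivial geodesic ray in $\w$.

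I would then check that $F$ is defined on a full probability set under both measures involved. Under $\Phat(d\what)$, the pair $(\w(\what), \cgeod{}^{\Bhat, 0}(\what))$ lies in the good set by \eqref{no e2} (Lemma \ref{lem:xhat nontrivial}) and the a.s.\ existence of Busemann limits along $\Bhat$-geodesics. Under $\P(d\w) \otimes \leb(ds)$, the pair $(\w, \cdfpi^{\Bhat, 0, s}(\w))$ also lies in the good set: by Lemma \ref{lm:Fubini-new} the joint distribution of $(\w, \cdfpi^{\Bhat, 0, s}(\w))$ under $\P \otimes \leb$ equals the distribution of $(\w(\what), \cgeod{}^{\Bhat, 0}(\what))$ under $\Phat$, so full measure events for the latter transfer to full measure events for the former.

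To conclude, I would push forward both measures under $F$. On the $\Phat$ side,
\[F\bigl(\w(\what), \cgeod{}^{\Bhat, 0}(\what)\bigr) = \bigl(\w(\what), \Busgeo_{\cgeod{}^{\Bhat, 0}(\what)}(\w(\what), \acdot, \acdot)\bigr) = (\w(\what), \Bhat(\what))\]
by Lemma \ref{lm:xhat->Bhat}, while on the $\P \otimes \leb$ side,
\[F\bigl(\w, \cdfpi^{\Bhat, 0, s}(\w)\bigr) = \bigl(\w, \Busgeo_{\cdfpi^{\Bhat, 0, s}(\w)}(\w, \acdot, \acdot)\bigr) = (\w, \Buss^s(\w))\]
by definition \eqref{B-def}. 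Since Lemma \ref{lm:Fubini-new} gives equality of the joint distributions before applying $F$, pushing forward by the common measurable map $F$ yields the desired equality of the joint distributions of $(\w(\what), \Bhat(\what))$ and $(\w, \Buss^s(\w))$. There is no real obstacle beyond careful bookkeeping of the measurability of $F$ and the transfer of full-probability events between the two parameterizations; both are handled by the lemmas already in place.
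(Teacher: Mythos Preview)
Your proposal is correct and follows essentially the same approach as the paper: apply Lemma~\ref{lm:Fubini-new} with $u=0$ to obtain equality of the joint distributions of $(\w,\cgeod{}^{\Bhat,0})$ and $(\w,\cdfpi^{\Bhat,0,s})$, then push forward by the measurable map $(\w,\gamma)\mapsto(\w,\Busgeo_\gamma(\w))$, using Lemma~\ref{lm:xhat->Bhat} on one side and the definition~\eqref{B-def} on the other. The paper's proof is more terse but uses exactly these ingredients; your version simply spells out the measurability and full-measure bookkeeping that the paper leaves implicit.
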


\begin{proof}
  Apply Lemma \ref{lm:Fubini-new} to the measurable mapping $(\w,\gamma)\mapsto(\w,\Busgeo_\gamma(\w))$ to get that the distribution of $(\w,\Busgeo_{\cdfpi^{\Bhat,u,s}}(\w))$ under $\P(d\w)\otimes\leb(ds)$ is the same as that of $(\w(\what),\Busgeo_{\cgeod{}^{\Bhat,u}(\what)}(\w(\what)))$ under $\Phat(d\what)$. The claim then follows from \eqref{B_u} and \eqref{xhat->Bhat}.
\end{proof}

Given $\Bhat\in\cKhat$, recall the random vector $\hhB(\Bhat, \what)$ defined in \eqref{h-def}. Note that if $\Bus\in\cK$, from the assumption that $\P$ is i.i.d.,
$\hhB(\Bus,\w)$ is a deterministic vector, i.e.\ $\hhB(\Bus,\w)=\E[\hhB(\Bus)]$, $\P$-almost surely.

\begin{lemma}\label{lm:orig_space}
Fix $\Bhat\in\cKhatcoal$. 
Let $h=\Ehat[\hhB(\Bhat)]$. Assume that $\Phat\{\what: \hhB(\Bhat, \what)=h\}=1$. Then for 
every $s\in(0,1]$,  $\Buss^s\in\cKcoalfor$ and $\hhB(\Buss^s)=h$, 
$\P$-almost surely.
Furthermore, $\P\{\forall s,t\in(0,1]:\Buss^s= \Buss^t\}=1$.
\end{lemma}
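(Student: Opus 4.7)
The plan is to first establish the ``furthermore'' statement that $\Buss^s$ is $\P$-almost surely constant in $s$, and then deduce the first part by transferring properties from any single good $s_0$. I start by identifying a full-Lebesgue-measure set of ``good'' $s$: applying Lemma~\ref{lm:coal} separately to each pair $(u,v)$ in the countable set $(\Z^2)^2$ and intersecting over pairs produces a full-Lebesgue-measure set $S_0 \subseteq (0,1]$ on which \eqref{s-choice} holds. Since $\Bhat \in L^1$, Lemma~\ref{lm:Bs-Bhat} applied with $B \mapsto |B(0,e_i)|$ yields $\int_0^1 \E|\Buss^s(0,e_i)|\,ds < \infty$, so restricting to a full-measure subset $S \subseteq S_0$ places $\Buss^s \in L^1$, and Lemma~\ref{lm:B-cK} then gives $\Buss^s \in \cKcoalfor$ for each $s \in S$.

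Next, I exploit the monotone structure. By Lemma~\ref{lm:xBhatproc}\eqref{lm:xBhatproc.inf}, $\cdfpi^{\Bhat,0,s_1}\preceq \cdfpi^{\Bhat,0,s_2}$ for $s_1\le s_2$, so Lemma~\ref{lm:B-order} gives $\Buss^{s_1}\preceq \Buss^{s_2}$. Ergodicity of the i.i.d.~measure $\P$ makes $\hhB(\Buss^s)$ deterministic for each $s\in S$, and Lemma~\ref{lem:hsupdif}(a) places $-\hhB(\Buss^s)\in\partial\shape(\Uset)$; applying Lemma~\ref{lm:Bs-Bhat} to $B\mapsto B(0,e_i)$ yields $\int_0^1 \hhB(\Buss^s)\,ds = h$. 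Thus $s\mapsto-\hhB(\Buss^s)$ is a $\preceq$-monotone path in $\partial\shape(\Uset)$ that averages to $-h$.

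The main obstacle is to upgrade these observations to the pointwise constancy $\Buss^{s_1} = \Buss^{s_2}$ $\P$-a.s.\ for $s_1<s_2$ in $S$. I plan to prove this at the level of geodesics by showing that the families $\{\cdfpi^{\Bhat,u,s_1}\}_u$ and $\{\cdfpi^{\Bhat,u,s_2}\}_u$ coalesce pairwise, i.e.\ $\cdfpi^{\Bhat,u,s_1} \coal \cdfpi^{\Bhat,u,s_2}$ for each $u$ almost surely. The argument is a Licea--Newman-style sandwich: by within-level coalescence, for large $k$ the path $\cdfpi^{\Bhat,ke_1,s_2}$ merges with $\cdfpi^{\Bhat,0,s_2}$ and is sandwiched to the right of $\cdfpi^{\Bhat,0,s_1}$ by monotonicity in both the starting vertex and the parameter $s$; stationarity and ergodicity under $T_{e_1}$, combined with the integrated-tilt identity above and the shape theorem~\eqref{B-shape}, then rule out persistent divergence between the two monotonically-ordered covariant tree structures. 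Once inter-level coalescence is in hand, Lemma~\ref{lm:B-order} yields $\Busgeo_{\cdfpi^{\Bhat,u,s_1}} = \Busgeo_{\cdfpi^{\Bhat,u,s_2}}$ $\P$-a.s., i.e.\ $\Buss^{s_1}=\Buss^{s_2}$.

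To extend to every $s\in(0,1]$, fix $s_0 \in S$ and set $\Bus := \Buss^{s_0}$; the previous step gives $\Buss^s = \Bus$ for all $s\in S$. For arbitrary $s\in(0,1)$, density of $S$ together with the $\preceq$-monotonicity from Paragraph~2 squeezes $\Buss^s$ between $\Bus$ from below (via $s_1<s$ in $S$) and $\Bus$ from above (via $s_2>s$ in $S$), so by antisymmetry of $\preceq$, $\Buss^s = \Bus$. The boundary value $s=1$ is accessible from $S$ only from the left; for it, I invoke the left-continuity of $s\mapsto \cdfpi^{\Bhat,0,s}$ from Lemma~\ref{lm:xBhatproc}\eqref{lm:xBhatproc.meas} together with the inter-level coalescence already produced in Paragraph~3 to conclude $\Buss^1 = \Bus$ as well. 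The first part of the lemma then follows: $\Buss^s = \Bus \in \cKcoalfor$ and $\hhB(\Buss^s) = h$ $\P$-a.s.\ for every $s \in (0,1]$.
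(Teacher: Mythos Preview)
Your paragraph~3 contains a genuine gap. From paragraph~2 you have only the averaged identity $\int_0^1 \hhB(\Buss^s)\,ds = h$, not $\hhB(\Buss^s)=h$ for almost every $s$; nothing you have written excludes a nonconstant monotone map $s\mapsto\hhB(\Buss^s)$ that merely integrates to $h$. The proposed ``Licea--Newman-style sandwich'' to force inter-level coalescence $\cdfpi^{\Bhat,u,s_1}\coal\cdfpi^{\Bhat,u,s_2}$ is not a proof: the Licea--Newman mechanism produces coalescence within a single covariant family from different basepoints, not across two distinct families indexed by $s_1\ne s_2$. In fact, if $\hhB(\Buss^{s_1})\ne\hhB(\Buss^{s_2})$ (which you have not ruled out), the two families are directed into different sectors and cannot coalesce, so the argument as sketched would have to already contain a proof that the tilts agree---and it does not. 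The appeal to ``stationarity and ergodicity under $T_{e_1}$, combined with the integrated-tilt identity and the shape theorem~\eqref{B-shape}'' is not an argument.

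The paper closes this gap by a direct and much simpler route that you should adopt in place of paragraph~3. Since $\hhB(\Bhat)=h$ almost surely, the cocycle shape theorem~\eqref{B-shape} together with Lemma~\ref{lm:xhat->Bhat} gives that the event $\bigl\{\varlimsup_{|x|_1\to\infty}|x|_1^{-1}|\Busgeo_{\cgeod{}^{\Bhat,0}}(\w,0,x)+h\cdot x|=0\bigr\}$ has full $\Phat$-probability. Push this forward through Lemma~\ref{lm:Fubini-new}: for $\leb$-a.e.\ $s$, $\P$-a.s., $n^{-1}\Buss^s(0,ne_i)\to -h\cdot e_i$. Combined with shift-covariance, the cocycle property, the bound $\Buss^s(x,x+e_i)\ge\w_x\in L^1$, and Birkhoff's theorem, this yields $\Buss^s\in L^1$ and $\hhB(\Buss^s)=h$ for $\leb$-a.e.\ $s$. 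Now choose a countable dense $D_3\subset(0,1]$ of such $s$: for $s,t\in D_3$ the monotonicity $\Buss^{s}\preceq\Buss^{t}$ from your paragraph~2 and the equal means $\E[\Buss^{s}(0,e_i)]=\E[\Buss^{t}(0,e_i)]=-h\cdot e_i$ force $\Buss^{s}=\Buss^{t}$ $\P$-a.s.\ immediately, with no geodesic-level argument required. The monotone sandwich then extends equality to all $s\in(0,1]$ as in your paragraph~4.
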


\begin{proof}
By Lemmas \ref{lm:coal}, \ref{lm:B-cK}, and \ref{lm:Bs-Bhat}, there exists a Borel set $D_1\subset(0,1]$ such that $\leb(D_1)=1$ and for each $s\in D_1$,
$\Buss^s$ is a 
forward measurable  shift-covariant recovering cocycle with coalescing geodesics.

The assumption $\hhB(\Bhat, \what)=h$, identity \eqref{xhat->Bhat}, and the cocycle shape theorem \eqref{B-shape}
give for $\Phat$-almost every $\what$
	\[\varlimsup_{\abs{x}_1\to\infty}\frac{\abs{\Busgeo_{\cgeod{}^{\Bhat,0}(\what)}(\w(\what),0,x)+h\cdot x}}{\abs{x}_1}=0.\]
Then Lemma \ref{lm:Fubini-new} says that there exists a Borel set $D_2\subset(0,1]$ with $\leb(D_2)=1$ and such that for each $s\in D_2$,   
	\[\P\Bigl\{\,\varlimsup_{\abs{x}_1\to\infty}\frac{\abs{\Busgeo_{\cdfpi^{\Bhat,0,s}(\w)}(\w,0,x)+h\cdot x}}{\abs{x}_1}=0\Bigr\}=1.\]
Thus, for each $s\in D_1\cap D_2$, $\Buss^s$ is a forward-measurable  shift-covariant  recovering cocycle (on $\Omega$) that satisfies
	\[\varlimsup_{\abs{x}_1\to\infty}\frac{\abs{\Buss^s(0,x)+h\cdot x}}{\abs{x}_1}=0,\quad\P\text{-almost surely}.\]
Then $n^{-1}\Buss^s(0,ne_1)\to h\cdot e_1$ and $n^{-1}\Buss^s(0,ne_2)\to h\cdot e_2$. The shift-covariance, the cocycle property, the inequalities $\Buss^s(\w,x,x+e_i)\ge\w_x\in L^{1}(\Phat)$, and Birkhoff's ergodic theorem give integrability and  $\hhB(\Buss^s)=h$, so in particular  $\Buss^s\in\cKcoalfor$. 

So far, we proved that for $\leb$-almost 
every $s\in(0,1]$,  $\Buss^s\in\cKcoalfor$ and $\hhB(\Buss^s)=h$. This implies that there exists a countable dense set $D_3\subset(0,1]$ such that for every $s\in D_3$, $\Buss^s\in\cKcoalfor$ and $\hhB(\Buss^s)=h$. 

The monotonicity of $\cdfpi^{\Bhat,0,s}$ in $s$ implies the monotonicity of $\Buss^s$ by Lemma \ref{lm:B-order}.
This monotonicity and the equal expectations imply that $\P$-almost surely, for any $s,t\in D_3$, $\Buss^s=\Buss^t$.
Using the monotonicity one more time extends this to all $s,t\in(0,1]$. 
%
\end{proof}

From Lemmas \ref{lm:Bs-Bhat} and \ref{lm:orig_space} we can now establish the strong existence claimed in Theorem \ref{thm:Bhat-B}.  

\begin{proof}[Proof of Theorem \ref{thm:Bhat-B}] 
Define $\Bus(\w,x,y)$ = $\int_0^1 \Buss^s(\w, x,y) ds$, a Borel-measurable random field on $\Omega$. By Lemma \ref{lm:orig_space}, $\bbP$-almost surely, $\Buss^s = \Bus$ for all $s \in (0,1]$ and so $\Bus \in \cKfor_c$. Lemma \ref{lm:Bs-Bhat} now implies that the joint distribution of $(\w(\what),\Bhat(\what))$ under $\Phat$ is the same as that of $(\w,\Bus(\w))$ under $\bbP$. It follows that $\Phat$-almost surely, $\Bhat(\what) = \Bus(\w(\what))$. This last claim is essentially Lemma 2.2 in \cite{Kur-07}, but we include the proof. We can uniquely (up to sets of $\bbP$-measure zero) factorize the joint distribution of $(\w(\what), \Bhat(\what))$ under $\Phat$ as the distribution $\bbP(d\w)$ of $\w$ together with a transition kernel $\eta(db\viiva\w)$ that represents the conditional distribution of $\Bhat$ given $\w$. Do the same on the other side of the equality in distribution to see that $\eta(db\viiva\w) = \delta_{\Bus(\w)}(db)$ $\bbP$-almost surely. Thus $\Phat\{\what : \Bhat(\what) = \Bus(\w(\what))\} = \Ehat[\Phat(\Bhat=\Bus\viiva\kS)]=1.$
\end{proof}

\subsection{Strong uniqueness}
Recall the order relation on cocycles: for $B,B'\in\R^{\Z^2}$, $B\preceq B'$ means $B(x,x+e_1)\ge B'(x,x+e_1)$ and $B(x,x+e_2)\le B'(x,x+e_2)$ for all $x\in\Z^2$.

\begin{lemma}\label{lm:monotonicity}
    Let $\Bus_1,\Bus_2\in\cKcoal$. Then exactly one of the following three happens: $\P(\Bus_1\precneq\Bus_2)=1$, $\P(\Bus_2\precneq\Bus_1)=1$, or $\P(\Bus_1=\Bus_2)=1$.
\end{lemma}

\begin{proof}
   On the full $\P$-probability event where  $\cgeod{}^{\Bus_1,u}\coal \cgeod{}^{\Bus_1,v}$ and $\cgeod{}^{\Bus_2,u}\coal\cgeod{}^{\Bus_2,v}$ for all $u,v\in\Z^2$, we have that $\cgeod{}^{\Bus_1,0}\preceq \cgeod{}^{\Bus_2,0}$ implies $\cgeod{}^{\Bus_1,u}\preceq\cgeod{}^{\Bus_2,u}$ for all $u\in\Z^2$. Thus   the event $\{\cgeod{}^{\Bus_1,0}\preceq \cgeod{}^{\Bus_2,0}\}$ is shift-invariant and thereby has  $\P$-probability of $0$ or $1$ by the ergodicity of $\P$. The same holds for the event $\{\cgeod{}^{\Bus_1,0}\succeq \cgeod{}^{\Bus_2,0}\}$.

    Since the $\preceq$ is a total order on $\Geo_0^\w$, we have that $\P$-almost surely, either $\cgeod{}^{\Bus_1,0}\preceq\cgeod{}^{\Bus_2,0}$ or $\cgeod{}^{\Bus_1,0}\succeq \cgeod{}^{\Bus_2,0}$. Since we just showed that these two events are trivial, we get that either $\cgeod{}^{\Bus_1,0}\preceq\cgeod{}^{\Bus_2,0}$, $\P$-almost surely, or $\cgeod{}^{\Bus_1,0}\succeq\cgeod{}^{\Bus_2,0}$, $\P$-almost surely. Lemmas \ref{lm:xhat->Bhat} and \ref{lm:g428} imply then that either $\Bus_1\preceq\Bus_2$, $\P$-almost surely, or $\Bus_1\succeq\Bus_2$, $\P$-almost surely. 

    To conclude, observe that the shift-covariance of $\Bus_1$ and $\Bus_2$ implies that the event $\{\forall x,y\in\Z^2:\Bus_1(\w,x,y)=\Bus_2(\w,x,y)\}$ is shift-invariant. Therefore, $\P(\Bus_1=\Bus_2)\in\{0,1\}$.
\end{proof}

Strong uniqueness follows.

\begin{proof}[Proof of Theorem \ref{thm:uniqueness}]
By the total order in Lemma \ref{lm:monotonicity}, there exists $i\in\{1,2\}$ such that $\P(\Bus_i\preceq\Bus_{3-i})=1$. Ergodicity of $\P$, together with the definition \eqref{h-def}, then implies $\hhB(\Bus_i)\preceq\hhB(\Bus_{3-i})$. If $\hhB(\Bus_1)=\hhB(\Bus_2)$, the the means coincide, and the ordering forces $\Bus_1=\Bus_2$ almost surely. The claim of the theorem follows from the trichotomy in Lemma \ref{lm:monotonicity}.
\end{proof}

\begin{proof}[Proof of Theorem \ref{thm:decomp}]
First, consider $\Phat$ which is ergodic under $\That$. Then $\Phat$-almost surely, $\hhB(\Bhat)$ defined by \eqref{h-def} is deterministic. Denote the value by $h$. By Theorems \ref{thm:Bhat-B} and \ref{thm:uniqueness}, 
$h\in\cH$ and $\Bhat(\what)=B^h(\w(\what))$, $\Phat$-almost surely. By Theorem \ref{thm:stexist}\eqref{h-=h+}, $B^{h-}(\w)=B^{h+}(\w)$, $\P$-almost surely, and hence $B^{h-}(\w(\what))=B^{h+}(\w(\what))$, $\Phat$-almost surely. We have thus shown that 
\[\Phat\Bigl\{\what:B^{\hhB(\Bhat,\what)-}(\w(\what),x,y)=B^{\hhB(\Bhat,\what)+}(\w(\what),x,y)=\Bhat(\what,x,y)\quad\text{for all $x,y\in\Z^2$}\Bigr\}=1.\]
Since the above event has full probability under every ergodic probability measure, the ergodic decomposition theorem \cite[Appendix B]{Ras-Sep-15-ldp} tells us that it also has full probability under any $\That$-invariant probability measure. 
\end{proof}

The following is another immediate corollary of Theorem \ref{thm:Bhat-B}.

\begin{corollary}\label{cor:coal=for}
   $\cKcoal=\cKfor$ and hence these spaces are both the same as their intersection $\cKcoalfor$.
\end{corollary}

\begin{proof} By an adaptation of the Licea-Newman \cite{Lic-New-96} coalescence argument given in
     Theorem A.1 in \cite{Geo-Ras-Sep-14}, $\cKfor\subset\cKcoal$. Forward measurability gives the finite energy condition used in the coalescence proof.  The previous inclusion then gives  $\cKfor\subset\cKcoalfor$, so we have $\cKcoalfor=\cKfor$.  Theorem \ref{thm:Bhat-B} 
      implies that also $\cKcoal\subset\cKcoalfor$ and hence $\cKcoal=\cKfor=\cKcoalfor$.
\end{proof}

Recall Theorem \ref{thm:exist} and the consequence that $\sH$ contains $\{-\nabla g(\xi\sigg): \xi \in \Uset, \sigg \in \{+,-\}\}$. By Theorem \ref{thm:uniqueness},  for each $h\in\cH$ there exists a unique $\Bush^h\in\cKcoalfor$ such that $\hhB(\Bush^h)=h$. 
%
The following is a direct consequence of Theorem \ref{thm:uniqueness}.

\begin{lemma}\label{lm:order}
     For any $h,h'\in\cH$,  either we have $h\preceq h'$ and $\P(\Bush^h\preceq\Bush^{h'})=1$ or we have $h'\preceq h$ and $\P(\Bush^{h'}\preceq\Bush^h)=1$. In particular, $\preceq$ is a total order on $\cH$ and $\cH\ni h\mapsto\Bush^h$ is nondecreasing.
\end{lemma}

Let $\cHdense$ be a countable dense subset of $\cH$. Using the monotonicity in Lemma \ref{lm:order} and the cocycle property \eqref{cocycle} that $\Bush^h$, $h\in\cHdense$, satisfy, 
define the process
\begin{align*}
    &\Bushtmp^{h-}(x,y)=\lim_{\cHdense\tsp\ni\tsp h'\nearrow h}\Bush^{h'}(x,y)\quad\text{and}\quad
    \Bushtmp^{h+}(x,y)=\lim_{\cHdense\tsp\ni\tsp h'\searrow h}\Bush^{h'}(x,y),
\end{align*}
for $x,y\in\Z^2$ and $h\in\cH$. Then for $\P$-almost every $\w$, for any $h\in\cH$ and $\sigg\in\{-,+\}$, $\Bushtmp^{h\sig}$ is a  recovering cocycle.
The following lemma says that for a fixed $h\in\cH$, the above definitions recover $\Bush^h$. 

\begin{lemma}\label{lem:extend=}
    Fix $h\in\cH$. Then $\P$-almost surely, for any $x,y\in\Z^2$, $\Bushtmp^{h-}(x,y)=\Bushtmp^{h+}(x,y)=\Bush^h(x,y)$. In particular, this holds for $\P$-almost every $\w$, simultaneously for all $h\in\cHdense$. 
\end{lemma}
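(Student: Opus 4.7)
The plan is to sandwich $\Bush^h$ between $\Bushtmp^{h-}$ and $\Bushtmp^{h+}$ via Lemma \ref{lm:order} and then force equality by matching expected increments. This bypasses the need to verify separately that the one-sided limits themselves lie in $\cKcoal$, so Theorem \ref{thm:uniqueness} is not invoked directly.

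First, I would work on the $\P$-full probability event on which the countable family $\{\Bush^{h'}:h'\in\cHdense\}$ is jointly defined and the monotonicity of Lemma \ref{lm:order} holds. Passing to the limit along the defining sequences for $\Bushtmp^{h\pm}$ then yields, pointwise,
\[
\Bushtmp^{h-}(x,x+e_1)\;\ge\;\Bush^h(x,x+e_1)\;\ge\;\Bushtmp^{h+}(x,x+e_1)
\]
and the reverse chain for the $e_2$-increments.

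Second, I would compute the expected increments by dominated convergence. Fix auxiliary $h_0,h_1\in\cHdense$ with $h_0\cdot e_1<h\cdot e_1<h_1\cdot e_1$; such points exist except possibly when $h$ is extremal in $\cH$ with respect to the $e_1$-coordinate, a degenerate case in which the corresponding one-sided limit is empty (or taken to be $\Bush^h$ by convention) and there is nothing to show. For $h'\in\cHdense$ with $h_0\cdot e_1\le h'\cdot e_1\le h_1\cdot e_1$, Lemma \ref{lm:order} gives the uniform bounds
\[
\Bush^{h_1}(0,e_1)\le \Bush^{h'}(0,e_1)\le \Bush^{h_0}(0,e_1),\qquad \Bush^{h_0}(0,e_2)\le \Bush^{h'}(0,e_2)\le \Bush^{h_1}(0,e_2),
\]
with both extremes in $L^1(\P)$ since $\Bush^{h_0},\Bush^{h_1}\in\cK$. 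Because $\E[\Bush^{h'}(0,e_i)]=-h'\cdot e_i$, dominated convergence yields
\[
\E[\Bushtmp^{h\pm}(0,e_i)] \;=\; \lim_{\cHdense\ni h'\to h}\E[\Bush^{h'}(0,e_i)]\;=\;-h\cdot e_i\;=\;\E[\Bush^h(0,e_i)].
\]

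Third, combining the almost-sure inequalities of Step 1 with the equality of expectations of Step 2 forces $\Bushtmp^{h\pm}(0,e_i)=\Bush^h(0,e_i)$ $\P$-a.s.\ (a nonnegative integrable random variable with zero mean vanishes a.s.). Shift covariance of all three cocycles promotes this to equality of every $(x,x+e_i)$-increment, and the cocycle identity \eqref{cocycle} extends it to arbitrary pairs $(x,y)$. The ``In particular'' clause then follows by intersecting the $\P$-null sets over the countable set $\cHdense$. The only substantive step is the dominated convergence bookkeeping in Step 2; the sandwich in Step 1 and the upgrade-from-expectations argument in Step 3 are formal, and no ingredients beyond Lemma \ref{lm:order} and standard monotone/dominated convergence are required.
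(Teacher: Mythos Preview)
Your proof is correct and follows essentially the same approach as the paper: establish the pointwise ordering $\Bushtmp^{h-}\preceq\Bush^h\preceq\Bushtmp^{h+}$ from Lemma \ref{lm:order}, match the expected nearest-neighbor increments, and conclude a.s.\ equality from a nonnegative integrable difference with zero mean. The paper's proof is more terse, first noting $\Bushtmp^{h-}\in\cK$ and invoking monotone convergence rather than your dominated convergence with auxiliary $h_0,h_1$; both routes yield $\E[\Bushtmp^{h\pm}(0,e_i)]=-h\cdot e_i$ and the rest is identical.
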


\begin{proof}
    We have that $\Bushtmp^{h-}\in\cK$, $\Bushtmp^{h-}\preceq\Bush^h$, and by monotone convergence, $\hhB(\Bushtmp^{h-})=\hhB(\Bush^h)$. This implies that $\Bushtmp^{h-}=\Bush^h$, $\P$-almost surely. The case of $\Bushtmp^{h+}$ is similar.
\end{proof}
\begin{proof}[Proof of Theorem \ref{thm:stexist}]
In view of the above lemma, we will drop the overline from $\Bushtmp^{h\sig}$ and just write $\Bush^{h\sig}$.  
Furthermore, when $\Bush^{h-}=\Bush^{h+}$, we drop the sign distinction and write $\Bush^h$. In particular, for each $h\in\cH$, $\P$-almost surely, $\Bush^{h-}=\Bush^{h+}=\Bush^h$.  The claimed monotonicity follows from Lemma \ref{lm:order}. Using dominated convergence, this implies that the mean $-h$ condition holds. The cocycle, recovery, and covariance properties are closed under limits. By almost sure left- and right- continuity, uniqueness for fixed $h$ implies uniqueness of the process.
\end{proof}

We close this section with the observation that Theorem \ref{thm:Bhat-B} 
and Corollary \ref{cor:coal=for} imply that
\begin{align}
\cH=\{\hhB(\Bus):\Bus\in\cKcoal\}
=\{\hhB(\Bus):\Bus\in\cKfor\}
=\bigl\{h\in\R^2:\Bhat\in\cKhatcoal,\ \Phat\{\what: \hhB(\Bhat,\what)=h\}=1\bigr\}.
\end{align}


\section{Shift-covariant cocycles and coalescing random geodesics}\label{sec:equiv}
In this section, we work toward to the proof of Theorem \ref{thm:xB}. We begin with the proof of Lemma \ref{Api-cov}.

\begin{proof}[Proof of Lemma \ref{Api-cov}]
If $\pihat^\aabullet\in\Gcchat$, then using \eqref{B-cov}  in the second equality and the coalescence $\pihat^0\coal\pihat^z$ with Lemma \ref{lm:B-order} in the last equality,
\begin{align*}
\Busgeo_{\pihat^0(\That_z\what)}(\w(\That_z\what),x,y)
&=\Busgeo_{\pihat^0(\That_z\what)}(T_z\w(\what),x,y)
=\Busgeo_{z+\pihat^0(\That_z\what)}(\w(\what),x+z,y+z)\\
&=\Busgeo_{\pihat^z(\what)}(\w(\what),x+z,y+z)
=\Busgeo_{\pihat^0(\what)}(\w(\what),x+z,y+z).\qedhere
\end{align*}
\end{proof}

The next lemma is a useful  tool. It states that under certain  geometric constraints,  a cocycle cannot generate two distinct covariant systems of geodesics.

\begin{lemma}\label{pi=rho}
    Let $\pihat^\aabullet$ and $\gammahat^\aabullet$ be two shift-covariant non-crossing systems of geodesics such that $\Phat\{\pihat^0\preceq\gammahat^0\}=1$. Assume that either $\pihat^\aabullet\in\Gcchat$ or $\gammahat^\aabullet\in\Gcchat$. Assume also that there exists a cocycle $\Bhat\in\cKhat$ such that both $\pihat^u$ and $\gammahat^u$ are $\Bhat$-geodesics for all $u\in\Z^2$. 
    That is, 
    \begin{align}\label{pi-rule}
    \Phat\bigl\{\what:\forall u\in\Z^2,\forall n\ge u\cdot(e_1+e_2):\Bhat(\what,\pihat_n^u(\what),\pihat_{n+1}^u(\what))=\w_{\pihat_n^u(\what)}(\what)\bigr\}=1
    \end{align}
    and
    \begin{align}\label{rho-rule}
    \Phat\bigl\{\what:\forall u\in\Z^2,\forall n\ge u\cdot(e_1+e_2):\Bhat(\what,\gammahat_n^u(\what),\gammahat_{n+1}^u(\what))=\w_{\gammahat_n^u(\what)}(\what)\bigr\}=1.
    \end{align}
    Then $\Phat\{\forall u\in\Z^2:\pihat^u=\gammahat^u\}=1$.
\end{lemma}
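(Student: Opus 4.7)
The plan is to reduce via shift-covariance to proving $\pihat^0=\gammahat^0$ almost surely, and then derive a contradiction from the assumption that this fails.

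The ordering $\pihat^0\preceq\gammahat^0$ together with both being $\Bhat$-geodesic systems forces any vertex $u$ at which the first steps of $\pihat^u$ and $\gammahat^u$ disagree to satisfy $\pihat^u_1=u+e_2$, $\gammahat^u_1=u+e_1$, together with the tie $\Bhat(u,u+e_1)=\Bhat(u,u+e_2)=\w_u$. Set $A:=\{\pihat^0_1=e_2,\,\gammahat^0_1=e_1\}$; by the $\That$-invariance of $\Phat$, the event of disagreement at any fixed $v\in\Z^2$ has probability $\Phat(A)$, so it suffices to prove $\Phat(A)=0$. The next step is the cocycle identity $\Busgeo_{\pihat^0}=\Bhat$ on $\Omhat$, proved exactly as in Lemma \ref{lm:xhat->Bhat}: for any $x,y\in\Z^2$ pick a coalescence point $z\in\pihat^x\cap\pihat^y$ (available since $\pihat^\aabullet\in\Gcchat$), use $\Bhat(x,z)=\Lpp_{x,z}$ and $\Bhat(y,z)=\Lpp_{y,z}$ from \eqref{pi-rule}, and combine with the Busemann limit along the shared tail; in particular $\Bhat\in\cKhatcoal$. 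On the event $A$, the non-crossing of the $\gammahat$-system and the fact that $\gammahat^{e_1}$ starts at $e_1\in\gammahat^0$ forces $\gammahat^{e_1}(\what)$ to coincide with the tail of $\gammahat^0(\what)$ from $e_1$ onward.

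The contradiction will come from identifying $\Busgeo_{\gammahat^0}=\Bhat$ as well. Combined with Lemma \ref{lm:B-order} and the equality of means $\hhB(\Busgeo_{\pihat^0})=\hhB(\Busgeo_{\gammahat^0})=\hhB(\Bhat)$, this lets me invoke Theorem \ref{thm:uniqueness} (after transferring to $\Omega$ via Theorem \ref{thm:Bhat-B}) to conclude $\Busgeo_{\pihat^0}=\Busgeo_{\gammahat^0}$ as cocycles; the arrow-field consequence at the origin then contradicts $\Phat(A)>0$, since equality of Busemann values at $0$ together with the recovery identity and shift-covariance forces the two systems to make the same tie-breaking choice at $0$.

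The hard part is the identification $\Busgeo_{\gammahat^0}=\Bhat$ without a coalescence hypothesis on $\gammahat$. I anticipate using the non-crossing identity iteratively: the path $\gammahat^x$ cannot cross $\gammahat^0$, and the sandwich $\pihat^x\preceq\gammahat^x$ together with coalescence of $\pihat^x$ and $\pihat^0$ provides sharp enough control on $\Lpp_{x,\gammahat^0_n}$ via $\Bhat(x,\gammahat^0_n)$ and the cocycle shape theorem \eqref{B-shape} to extract the Busemann limit. An alternative completion is a direct ergodic argument: shift-covariance combined with the identity $\gammahat^{e_1}=\gammahat^0|_{\text{from }e_1}$ on $A$ converts $\Phat(A)>0$ into a positive Birkhoff frequency of disagreements along $\Z e_1$, forcing $\gammahat^0$ to coincide with the trivial $e_1$-axis geodesic on a positive-probability set, which contradicts the non-triviality of geodesics generated by Busemann cocycles (cf.\ \eqref{no e2} applied to $\Busgeo_{\gammahat^0}$).
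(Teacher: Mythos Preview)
Your main approach has a fundamental gap at the conclusion. Even granting $\Busgeo_{\pihat^0}=\Busgeo_{\gammahat^0}=\Bhat$ (which you do not actually complete), this does not force $\pihat^0=\gammahat^0$: at a tie $\Bhat(0,e_1)=\Bhat(0,e_2)=\w_0$, both steps are valid for a $\Bhat$-geodesic, and the cocycle places no constraint on which one a covariant system takes. The whole content of the lemma is precisely that two covariant $\Bhat$-geodesic systems (one coalescing) cannot break ties differently, so your conclusion step assumes what is to be proved. Invoking Theorem~\ref{thm:uniqueness} does not help: it compares cocycles, not geodesic systems, and in any case requires both cocycles to lie in $\cKcoal$ with deterministic tilt, neither of which you have for $\Busgeo_{\gammahat^0}$. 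Your alternative completion fails for a different reason: a positive Birkhoff frequency of $A$ along $\Z e_1$ only says that $\gammahat^{ke_1}$ takes an $e_1$-step at a positive density of $k$; at the intermediate $k$ with $\That_{ke_1}\what\notin A$ the path $\gammahat^{ke_1}$ may well take an $e_2$-step (agreeing with $\pihat^{ke_1}$), so $\gammahat^0$ can leave the $e_1$-axis and never return. The claim that $\gammahat^0=\Z_+e_1$ on a positive-probability set is unwarranted.

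The paper's proof is a direct geometric argument that does not pass through Busemann functions at all. Poincar\'e recurrence produces sites $ke_1$ and $\ell e_1$ with $\ell\ge k+m$ at both of which $\pihat$ and $\gammahat$ separate within $m$ steps. Then $\pihat^{\ell e_1}$ starts strictly to the right of $\gammahat^{ke_1}$ but, via coalescence with $\pihat^{ke_1}\precneq\gammahat^{ke_1}$, ends up strictly to its left, so the two cross at some point $z$. Since $ke_1$, $z$ lie on $\gammahat^{ke_1}$ and $z$, $x$ lie on $\pihat^{\ell e_1}$ (with $x$ the coalescence point of $\pihat^{ke_1}$ and $\pihat^{\ell e_1}$), the cocycle identity gives $\Lpp_{ke_1,x}=\Lpp_{ke_1,z}+\Lpp_{z,x}$, placing $z$ on a geodesic from $ke_1$ to $x$ that lies strictly to the right of $\pihat^{ke_1}$---contradicting the fact that $\pihat^{ke_1}$ is locally rightmost.
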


\begin{proof}
    The two cases are proved similarly. We work out the case $\pihat^\aabullet\in\Gcchat$. By the shift-covariance \eqref{pihat56} of both $\pihat^u$ and $\gammahat^u$, it is enough to prove $\Phat\{\pihat^0=\gammahat^0\}=1$.
    We show that  $\cE_\infty=\{\what:\pihat^0(\what)\precneq \gammahat^0(\what)\}$ is a zero $\Phat$-probability event.  To arrive at a contradiction, suppose $\Phat(\cE_\infty)>0$.  
   
    The events 
        $\cE_m=\{\pihat^0_{0:m}\precneq \gammahat^0_{0:m}\}$ increase up to $\cE_\infty$ as  $m\nearrow\infty$.  
    Hence we can pick  $m\in\N$ so that $\Phat(\cE_m)>0$.   By Poincar\'e recurrence, $\P$-almost surely $\cE_m\subset\bigcup_{\ell\ge N} \That_{-\ell e_1}\cE_m$  for each $N\in\N$ \cite[Section 1.3]{Kre-85}. Then 
           \be\label{ck8321} \begin{aligned}
\Phat\Bigl(\;\bigcup_{k\ge1}\bigcup_{\ell\ge k+m}\That_{-ke_1}\cE_m\cap \That_{-\ell e_1}\cE_m\Bigr)
&= \Phat\Bigl(\;\bigcup_{k\ge1}\That_{-ke_1} \Bigl[ \cE_m\cap\bigcup_{j\ge m} \That_{-j e_1}\cE_m\Bigr] \Bigr)   \\
&= \Phat\Bigl(\;\bigcup_{k\ge1}\That_{-ke_1}\cE_m\Bigr)>0.   
        \end{aligned}\ee
      
    Let $\Omhat_0$ be the full $\Phat$-probability event on which the geodesics $\{\pihat^u(\what)\}_{u\tsp\in\tsp\Z^2}$ coalesce, $\{\gammahat^u(\what)\}_{u\tsp\in\tsp\Z^2}$ are non-crossing, $\pihat^0(\what)\preceq\gammahat^0(\what)$, and both events in \eqref{pi-rule} and \eqref{rho-rule} hold.
    The proof is concluded by showing that 
 \be\label{ck-goal2} 
 \Phat\bigl(\Omhat_0\cap \That_{-ke_1}\cE_m\cap \That_{-\ell e_1}\cE_m \bigr)  =0 \quad \forall\, k\ge 1, \;\ell\ge k+m. 
 \ee 
 Since $\Phat(\Omhat_0)=1$, this contradicts \eqref{ck8321}, which in turn forces $\Phat(\cE_\infty)=0$. 
    
    Let $\what\in \Omhat_0\cap\That_{-ke_1}\cE_m\cap\That_{-\ell e_1}\cE_m$ with $\ell\ge k+m>m$. Then the non-crossing of the geodesics $\gammahat^0(\what)$, $\gammahat^{ke_1}(\what)$, and $\gammahat^{\ell e_1}(\what)$ implies $\gammahat^0(\what)\preceq\gammahat^{ke_1}(\what)\preceq\gammahat^{\ell e_1}(\what)$. This, $\pihat^0(\what)\precneq \gammahat^0(\what)$, and the coalescence of $\{\pihat^u(\what)\}_{u\tsp\in\tsp\Z^2}$, together  imply 
\be\label{ck8341}  \pihat^u(\what)\precneq \gammahat^{v}(\what) \quad  \text{for } \ u,v\in\{ke_1,\ell e_1\} . \ee  
    Next, 
    $\what\in \That_{-ke_1}\cE_m\cap \That_{-\ell e_1}\cE_m$ says that  
   $\pihat^u(\what)$ and $\gammahat^u(\what)$ separate in the first $m$ steps, for both $u\in\{ke_1,\ell e_1\}$. Once separated, locally rightmost geodesics from $u$ cannot meet again, and so 
\be\label{ck9041}  \pihat_{o+m:\infty}^u(\what)\cap \gammahat_{o+m:\infty}^u(\what)=\varnothing \quad\text{for both } \ u\in\{ke_1,\ell e_1\} .  \ee

   We draw the conclusions from the observations above.  
 Since $\pihat^{\ell e_1}(\what)$ starts strictly to the right of $\gammahat^{ke_1}(\what)$ but by \eqref{ck8341} ends up strictly to its left, $\gammahat^{ke_1}(\what)$ and $\pihat^{\ell e_1}(\what)$ must intersect.   
    Denote their first intersection point by $z=\gammahat_n^{ke_1}(\what)=\pihat_n^{\ell e_1}(\what)$.  Since $z\in\pihat^{\ell e_1}(\what)$ we have $z\ge\ell e_1$. This implies 
    $n-k=(z-ke_1)\cdot(e_1+e_2)
    \ge  \ell-k\ge m$. Since $z\in\gammahat^{ke_1}(\what)$, \eqref{ck9041} implies $z\not\in\pihat^{ke_1}(\what)$.

    Let $x$ denote the coalescence point of  $\pihat^{ke_1}(\what)$ and $\pihat^{\ell e_1}(\what)$.   Since $z\in\pihat^{\ell e_1}(\what)\setminus\pihat^{ke_1}(\what)$, $z$ must lie on $\pihat^{\ell e_1}(\what)$ before $x$, and thereby $z$ lies strictly to the right of $\pihat^{ke_1}(\what)$. Thus in coordinatewise  ordering  
\[ke_1\le x \quad \text{and}\quad  \ell e_1\le z\le x.\]
 
Since $\what$ is in the event in \eqref{pi-rule} and since $x$ is on $\pihat^{ke_1}(\what)$ and $z\le x$ are both on $\pihat^{\ell e_1}(\what)$,
we have $\Lpp_{ke_1,x}(\w(\what))=\Bhat(\what,ke_1,x)$ and $\Lpp_{z,x}(\w(\what))=\Bhat(\what,z,x)$.
Similarly, since $z$ is on $\gammahat^{ke_1}(\what)$ and $\what$ is in the event in \eqref{rho-rule} we have $\Lpp_{ke_1,z}(\w(\what))=\Bhat(\what,ke_1,z)$.
By the cocycle property,  
\begin{align*}
    &\Lpp_{ke_1,x}(\w(\what))-\Lpp_{ke_1,z}(\w(\what))-\Lpp_{z,x}(\w(\what))\\
    &\qquad=\Bhat(\what,ke_1,x)-\Bhat(\what,ke_1,z)-\Bhat(\what,z,x)=0.
\end{align*}
This implies that $z$ is on some  geodesic from $ke_1$ to $x$.  But we observed above that  $z$ lies strictly to the right of $\pihat^{ke_1}(\what)$.   We have a contradiction because  by definition  $\pihat^{ke_1}(\what)$ gives the rightmost  geodesic from $ke_1$ to $x$. 
This contradiction verifies \eqref{ck-goal2}.
\end{proof}

As an application of the above lemma, we obtain Lemma \ref{phih-unique}: 

\begin{proof}[Proof of Lemma \ref{phih-unique}]
Define
\be\label{to3200}   \pi^u(\what)=\inf\{\gamma\in\Geo_u^{\w(\what)}:\gamma\text{ is a $\Bhat$-geodesic in $\Geo_u^{\w(\what)}$}\}.\ee 
The above set is not empty because it contains $\cgeod{}^{\Bhat,u}(\what)$. This gives $\pi^u\preceq\cgeod{}^{\Bhat,u}$. The $e_1$ tiebreaker guarantees that $\cgeod{}^{\Bhat,u}$ stays weakly to the right of every   $\Bhat$-geodesic out of $u$. The uniqueness claim follows from showing that $\pi^u=\cgeod{}^{\Bhat,u}$.


We first prove that $\Phat$-almost surely     $\pi^u(\what)\in\Geo_u^{\w(\what)}$ and  $\pi^u(\what)$ is a $\Bhat$-geodesic. 
This is clear if the infimum is attained. When the infimum is not attained, there exists a sequence $\gamma^k\in\Geo_u^\w$ of $\Bhat$-geodesics in $\Geo_u^\w$ that is strictly decreasing to $\pi^u$. Then $\pi^u\in\Geo_u^\w$. 
To see it is a $\Bhat$-geodesic, take any two consecutive points $\pi_i^u$ and $\pi_{i+1}^u$. Take $k$ large enough so that $\gamma^k_{i:i+1}=\pi_{i:i+1}^u$. Then, since $\gamma^k$ is a $\Bhat$-geodesic,
\[\Bhat(\pi_i^u,\pi_{i+1}^u)=\Bhat(\gamma^k_i,\gamma^k_{i+1})=\w_{\gamma^k_i}=\w_{\pi_i^u}.\]

The shift-covariances \eqref{w-cov} of the weights $\what\mapsto\w(\what)$, the shift-covariance \eqref{Geo55} of the geodesic trees, and the shift-covariance \eqref{covariance} of $\Bhat$ imply that $\pi^\aabullet$ is a shift-covariant system of random geodesics. We argue that the system is non-crossing. For this, suppose $\pi^v$ and $\pi^u$ meet and then eventually separate at a point $z$. For concreteness, suppose $\pi^v\aprecneq\pi^u$. Take $n$ large enough so that $\pi^v_n\precneq\pi^u_n$. Consider the rightmost geodesic $\sigma^{u,\pi^v_n}(\w)$ from $u$ to $\pi^v_n$. The concatenation of $\pi^u_{u:z}$ and $\pi^v_{z:\infty}$ is a $\Bhat$-geodesic and, therefore, the concatenation of $\pi^u_{u:z}$ and $\pi^v_{z:n}$ is a geodesic in environment $\w$. It then must be weakly to the left of $\sigma^{u,\pi^v_n}$.  In particular, $z\preceq \sigma^{u,\pi^v_n}$. 

We prove by contradiction that $z\in \sigma^{u,\pi^v_n}$. 
If not, then $z$ is strictly to the left of $\sigma^{u,\pi^v_n}$.  Now $\sigma^{u,\pi^v_n}$ starts from $u$, goes  strictly to the right of $\pi^u$  (at the level of $z$), and then   strictly to the  left of $\pi^u$  (at level $n$).  This violates the locally-rightmost property of  $\pi^u$. Thus   $z\in\sigma^{u,\pi^v_n}$. 

The inclusion $z\in\sigma^{u,\pi^v_n}$ forces $\sigma^{u,\pi^v_n}$ to be  the concatenation of $\pi^u_{u:z}$ and $\pi^v_{z:n}$. Since this holds for all large enough $n$,   the concatenation of $\pi^u_{u:z}$ and $\pi^v_{z:\infty}$
is a locally rightmost semi-infinite geodesic and hence
in the tree $\Geo_u^\w$. This provides a $\Bhat$-geodesic in $\Geo_u^\w$ strictly to the left of $\pi^u$, violating the minimality of $\pi^u$ in \eqref{to3200}. This contradiction implies that if $\pi^u$ and $\pi^v$ ever meet, they have to coalesce.

Now we have a shift-covariant non-crossing system of geodesics $\pi^\abullet$, a shift-covariant coalescing system of geodesics $\cgeod{}^{\Bhat,\aabullet}$, both are  systems are $\Bhat$-geodesics, and $\pi^u\preceq\cgeod{}^{B,u}$  $\Phat$-almost surely and for all $u\in\Z^2$. By Lemma \ref{pi=rho},  $\pi^\aabullet=\cgeod{}^{\Bhat,\aabullet}$. Thus there are no other  $\Bhat$-geodesics in   $\Geo^{\w(\what)}$ except those from  the system $\cgeod{}^{\Bhat,\aabullet}$.    
\end{proof}

The next two lemmas give the equivalence between the cocycle-focused approach of this paper and the geodesic-focused approach of  \cite{Ahl-Hof-16-} and complete the proof of Theorem \ref{thm:xB}.

\begin{lemma}\label{pi=xB} Let $\pihat^\aabullet\in\Gcchat$ and let $\Bhat=\Busgeo_{\pihat^0}$. 
Then $\Phat\{\forall u\in\Z^2:\cgeod{}^{\Bhat,u}=\pihat^u\}=1$.\qedhere
\end{lemma}

\begin{proof}  
The system $\{\cgeod{}^{\Bhat,u}\}$ generated by $\Bhat$ is noncrossing because if $\cgeod{}^{\Bhat,u}$ and $\cgeod{}^{\Bhat,v}$ ever intersect, their subsequent steps   are determined by $\Bhat$ and hence identical. Furthermore, $\cgeod{}^{\Bhat,0}$ is by definition the geodesic of $\Bhat$ that takes $e_1$ steps at ties $\Bhat(x,x+e_1)=\Bhat(x,x+e_2)$, and hence $\pihat^0\preceq\cgeod{}^{\Bhat,0}$  by Lemma \ref{lm:Arho6}.  Both \eqref{pi-rule} and \eqref{rho-rule} are satisfied. The claim $\cgeod{}^{\Bhat,u}=\pihat^u$ follows from Lemma \ref{pi=rho}.
\end{proof}

\begin{lemma}\label{pi=xB.b} Suppose $(\Omhat, \kShat,\Phat,\That)$ is ergodic. Then   $\pihat^\aabullet\in\Gcchat$ implies $\Busgeo_{\pihat^0}\in\cKhatcoal$. \end{lemma}
\begin{proof}
Theorem \ref{thm:g400} and Lemma \ref{Api-cov} imply that $\Busgeo_{\pihat^0}$ is a shift-covariant recovering cocycle. To prove that $\Busgeo_{\pihat^0}$ is $L^1$ we bound it with  the Busemann process.
Define, relative to the southeast order $\preceq$ on the simplex $\Uset=[e_2, e_1]$ of direction vectors, 
\[
\underline{\xi}(\what)=\varliminf_{n\to\infty} \pihat^0_n(\what)/n 
\qquad \text{ and }\qquad
\overline{\xi}(\what) = \varlimsup_{n\to\infty} \pihat^0_n(\what)/n .\]
Because $\pihat^{\aabullet}$ is shift-covariant and coalescing, $\overline{\xi}$ and $\underline{\xi}$ are shift-invariant random variables and then by the ergodicity assumption  almost surely constant. By Lemma 5.1 in \cite{Gro-Jan-Ras-25}, $e_2\precneq\underline{\xi} \preceq\overline{\xi} \precneq e_1$. This result implicitly uses Martin's asymptotic \eqref{eq:Martin}. Since $\shape$ has at most countably many nondifferentiability points, there exist points of differentiability $\zeta\precneq\eta$ in $\ri\Uset$ such that 
\begin{align}\label{tmp1159}
\zeta\precneq\underline\xi\preceq\overline\xi\precneq\eta
\end{align}
at which $\shape(\zeta)\neq \nabla \shape(\underline{\xi})\cdot \zeta$ and $\shape(\eta)\neq \nabla \shape(\overline{\xi})\cdot \eta$. The latter claim again relies on   \eqref{eq:Martin} which ensures  that no linear segment of the shape   abuts the coordinate boundaries.

By Theorem \ref{thm:stexist}, $(\Omhat,\kShat)$ supports an $\kS$-measurable copy of the tilt-indexed Busemann process. In particular, both $\Bhat^{-\nabla\shape(\zeta)}(\what) = \Bus^{-\nabla\shape(\zeta)}(\w(\what))$ and $\Bhat^{-\nabla\shape(\eta)}= \Bus^{-\nabla\shape(\eta)}(\w(\what))$ are in $\cKhatcoal$.
Abbreviate $\underline\pi=\cgeod{}^{\Bhat^{-\nabla\shape(\zeta)},0}$ and  $\overline\pi=\cgeod{}^{\Bhat^{-\nabla\shape(\eta)},0}$. By Theorem 4.3 in \cite{Geo-Ras-Sep-17-ptrf-2} (or Theorem 5.8(h) of \cite{Gro-Jan-Ras-25-}),
any limit point of $\underline\pi_n/n$ is contained in the set of directions $\{\xi \in \ri \Uset : \nabla \shape(\zeta) \in \partial \shape(\xi)\}.$
A similar statement holds for the limit points of $\overline\pi_n/n$.  This, the definitions of $\underline\xi$ and $\overline\xi$, and the inequalities \eqref{tmp1159} give that $\Phat$-almost surely,
\be\label{1359}
\cgeod{}^{\Bhat^{-\nabla\shape(\zeta)},0}\precneq\pihat^0\precneq\cgeod{}^{\Bhat^{-\nabla\shape(\eta)},0}.
\ee
Combine Lemmas \ref{lm:xhat->Bhat} and  \ref{lm:B-order} to get the  $\Phat$-almost sure bounds 
\begin{align}\label{temp1171}
\Bhat^{-\nabla\shape(\zeta)}=\Busgeo_{\cgeod{}^{\Bhat^{-\nabla\shape(\zeta)},0}}
\preceq
\Busgeo_{\pihat^0}
\preceq
\Busgeo_{\cgeod{}^{\Bhat^{-\nabla\shape(\eta)},0}}=\Bhat^{-\nabla\shape(\eta)}.
\end{align}
The integrability of the $\Bhat^{h\sig}$ cocycles (Theorem \ref{thm:exist}) implies that 
 $\Busgeo_{\pihat^0}$ is $L^1$.
%
%
%
\end{proof}   

Theorem \ref{thm:xB} comes  from the combination of Lemmas \ref{pi=xB} and \ref{pi=xB.b}. 

\medskip 

\appendix
\section{Technical lemmas}\label{app:aux}
\subsection{Measurability of the geodesic tree}
We begin with measurability of $\Geo_u^\w$.
For $u,x\in\Z^2$ with $x\ge u$ and $i\in\{1,2\}$ define
 the $\kSfor_u$-measurable random variables
	\[\edge_{u,x}^i(\w)=\begin{cases}
	1&\text{if }\forall m\ge x\cdot(e_1+e_2)+1\ \exists \sigma\in\Geo_{u,m}^\w:x,x+e_i\in\sigma,\\
	0&\text{otherwise.}
	\end{cases}\]
Think of $\edge_{u,x}^i=1$ as opening the edge $(x,x+e_i)$. 
Note that if $\edge_{u,x}^i=1$ then $\edge_{u,x+e_i}^j=1$ for some (or both) $j\in\{1,2\}$.
Starting at $u$ and following open edges gives a locally-rightmost semi-infinite geodesic
and, conversely, the edges of any locally-rightmost semi-infinite geodesic started at $u$ are all open. Hence 
$\edge_u=\bigl\{\edge_{u,x}^i:x\in u+\Z_+^2, \, i\in\{1,2\}\bigr\}$ is an $\kSfor_u$-measurable way to encode the random tree $\Geo_u^\w$.  It is not hard to see that $\Geo_u^\w$ is also a closed set in the product-discrete topology on paths. Since the space of paths rooted at $u$ is compact in this topology, this implies that $\Geo_u^\w$ is compact.

\subsection{Measurability on $\bbX_u$}
Without loss of generality, we consider $u=0$ for notational simplicity. We begin with some preliminary observations about the path space $\bbX_0$. Recall that for $\gamma,\pi\in\bbX_0$, the metric distance between $\gamma$ and $\pi$ is $d(\gamma,\pi) = \sum_{i=0}^\infty 2^{-(i+1)}\one_{\{\gamma_i\neq\pi_i\}}$.

We prove measurability of the expression in \eqref{eq:inf-formula-rational}.

\begin{lemma}\label{lem:infmeas}
If $\nu^\w$ is a regular conditional distribution on $(\Omhat,\kShat)$ given $\kS$, then the function 
\[
(\what, \rho) \in \Omhat\times\bbX_0 \mapsto \nu^{\w(\what)}(\cgeod{}^{\Bhat,0}\preceq \rho)\in[0,1]
\]
is jointly  $(\kS,\sB(\bbX_0))$-measurable.  \end{lemma}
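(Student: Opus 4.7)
The plan is to express the target function
\[
F(\what,\rho) \;:=\; \nu^{\w(\what)}(\cgeod{}^{\Bhat,u}\preceq \rho)
\]
as a pointwise monotone limit of finite-horizon approximations that are transparently jointly measurable, and then invoke preservation of measurability under pointwise limits. Since the WLOG reduction above the lemma sets $u=0$, both $\cgeod{}^{\Bhat,0}$ and $\rho$ start at $0$, so $\gamma\preceq\rho$ is just the level-by-level comparison $\gamma_n\preceq\rho_n$ for all $n\ge0$. In particular,
\[
\{\gamma\preceq\rho\} \;=\; \bigcap_{n\ge 0} \{\gamma_{0:n}\preceq\rho_{0:n}\},
\]
and this intersection is decreasing in $n$. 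By continuity of probability, $F(\what,\rho) = \lim_{n\to\infty} F_n(\what,\rho)$, where
\[
F_n(\what,\rho) \;:=\; \nu^{\w(\what)}\bigl(\cgeod{}^{\Bhat,0}_{0:n}\preceq\rho_{0:n}\bigr).
\]

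The second step is to verify joint measurability of each $F_n$. Writing $\sS_n$ for the finite set of up-right segments of antidiagonal length $n$ rooted at $0$, one has
\[
F_n(\what,\rho) \;=\; \sum_{\sigma\in\sS_n}\one\{\sigma\preceq\rho_{0:n}\}\,\nu^{\w(\what)}\bigl(\cgeod{}^{\Bhat,0}_{0:n}=\sigma\bigr).
\]
For fixed $\sigma$, the indicator depends only on the first $n+1$ coordinates of $\rho$, hence is a continuous (therefore Borel) function on $\bbX_0$. The event $\{\cgeod{}^{\Bhat,0}_{0:n}=\sigma\}$ lies in $\kShat$ because $\cgeod{}^{\Bhat,0}$ is constructed in an $\kShat$-measurable way from the arrow field $\edgehat^\Bhat$ (which is in turn built measurably from $\Bhat$). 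So by the defining property of a regular conditional distribution, $\what\mapsto\nu^{\w(\what)}(\cgeod{}^{\Bhat,0}_{0:n}=\sigma)$ is $\kS$-measurable. A finite sum of products of jointly measurable functions is jointly measurable, so $F_n$ is $(\kS,\sB(\bbX_0))$-measurable.

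Passing to the pointwise limit, $F=\lim_n F_n$ inherits joint $(\kS,\sB(\bbX_0))$-measurability, which completes the argument. There is no substantive obstacle; the only points requiring mild care are the identification of $\preceq$ with coordinate-wise ordering for paths sharing the common origin $0$ (which validates the countable-intersection decomposition) and the $\kShat$-measurability of cylinder-type events for $\cgeod{}^{\Bhat,0}$, both of which are immediate from the explicit measurable construction via $\edgehat^\Bhat$.
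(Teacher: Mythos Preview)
Your proof is correct and follows essentially the same approach as the paper: both define the truncations $F_n(\what,\rho)=\nu^{\w(\what)}(\cgeod{0:n}^{\Bhat,0}\preceq\rho_{0:n})$, establish their joint measurability, and pass to the pointwise limit via continuity of measure. The only cosmetic difference is that the paper verifies joint measurability of $F_n$ by noting $\rho\mapsto F_n(\what,\rho)$ is continuous and invoking a Carath\'eodory-type lemma (measurable in one variable, continuous in the other, separable codomain), whereas you write out the explicit finite-sum decomposition over initial segments $\sigma$; your route is slightly more elementary but the content is the same.
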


\begin{proof}
Denote by $F(\what,\rho)$ the function in the statement. It suffices to show that $F$ is the limit of jointly measurable functions. We begin with the observation that for each $n \in \bbN$ and $\what \in \Omhat$
\begin{align*}
\rho \mapsto F_n(\what,\rho)= \nu^{\w(\what)}(\cgeod{0:n}^{\Bhat,0}\preceq \rho_{0:n})
\end{align*}
is continuous. To see this, note that if $\rho^k \to \rho$, then for all sufficiently large $k$, $\rho^k_{0:n}=\rho_{0:n}$. $\bbX_0$ is separable, being compact, so it follows from $\kS$-measurability of $\nu^{\w(\what)}$ that $F_n$ is $(\kS,\sB(\bbX_0))$-jointly measurable. See, e.g., Lemma 4.51 in \cite{Ali-Bor-06}. 

We have that
\[
\{\cgeod{}^{\Bhat,0}\preceq \rho\} = \bigcap_n \{\cgeod{0:n}^{\Bhat,0} \preceq \rho_{0:n}\}.
\]
From continuity of measure, $F(\what,\rho) = \lim_n F_n(\what,\rho)$. Therefore $F$ is measurable.
\end{proof}

\begin{corollary}\label{cor:infmeas}
    For $s \in [0,1]$, the $\bbX_0$-valued random variable $\cdfpi^{\Bhat,0,s}$ in \eqref{eq:inf-formula-rational} is $\kS$-measurable.
\end{corollary}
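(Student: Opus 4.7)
The plan is to enumerate a measurable parametrization of $\RI_u^{\w}$, rewrite $\cdfpi^{\Bhat,u,s}$ as a countable infimum of $\kS$-measurable $\bbX_u$-valued random paths, and argue that this infimum is measurable by evaluating coordinate-wise.

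First, I would fix an enumeration $\{\sigma^k\}_{k\in\bbN}$ of all finite up-right paths starting at $u$. For each $k$, let $n_k$ be the terminal level of $\sigma^k$ and define
\[
\rho^k(\what)\;=\;\begin{cases} \sup\bigl\{\pi \in \Geo_u^{\w(\what)} : \pi_{u:n_k}=\sigma^k\bigr\} & \text{if this set is nonempty},\\ u+\Z_+ e_1 & \text{otherwise}.\end{cases}
\]
As noted right after \eqref{eq:isodense}, $\rho^k$ is $\kSfor_u$-measurable (hence $\kS$-measurable), since it is built inductively from the edge indicators $\edge_{u,x}^i(\w)$. By the definition of $\RI_u^\w$, for every $\what$ the set $\{\rho^k(\what):k\in\bbN\}$ coincides with $\RI_u^{\w(\what)}\cup\{u+\Z_+ e_1\}$.

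Next, for $s\in\bbQ\cap[0,1]$, let $F(\what,\rho)=\nu^{\w(\what)}(\cgeod{}^{\Bhat,u}\preceq\rho)$ be the jointly measurable map from Lemma \ref{lem:infmeas}. Then the event
\[
A_k^s \;=\;\bigl\{\what : F(\what,\rho^k(\what))\ge s\bigr\}
\]
is $\kS$-measurable, so the truncated random path
\[
\tilde\rho^{k,s}(\what)\;=\;\one_{A_k^s}(\what)\,\rho^k(\what) \;+\; \one_{(A_k^s)^c}(\what)\,(u+\Z_+ e_1)
\]
is $\kS$-measurable, with values in $\Geo_u^{\w(\what)}$ (noting $u+\Z_+ e_1$ is always a geodesic ray and is the $\preceq$-maximum of $\bbX_u$). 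Using \eqref{eq:inf-formula-rational} together with the fact that $\Geo_u^\w$ has the greatest lower bound property under the total order $\preceq$, I can write
\[
\cdfpi^{\Bhat,u,s}(\what) \;=\; \inf_{k\in\bbN} \tilde\rho^{k,s}(\what).
\]
Since $\preceq$ on geodesic rays rooted at $u$ means $\pi_n\cdot e_1 \le \gamma_n\cdot e_1$ for every $n$, this infimum is computed coordinate-wise: at level $n$, its first coordinate is $\inf_k\tilde\rho^{k,s}_n(\what)\cdot e_1$. A countable infimum of $\bbZ$-valued $\kS$-measurable functions is $\kS$-measurable, so $\cdfpi^{\Bhat,u,s}$ is $\kS$-measurable as a map into $\bbX_u$ for every rational $s$.

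Finally, for arbitrary $s\in[0,1]$, definition \eqref{eq:inf-formula} expresses $\cdfpi^{\Bhat,u,s}$ as the supremum over rational $r<s$ of the already-measurable random paths $\cdfpi^{\Bhat,u,r}$. The same coordinate-wise argument, now applied to the suprema of the $e_2$-components, yields $\kS$-measurability of $\cdfpi^{\Bhat,u,s}$. The main point requiring care is the passage from an abstract order-theoretic infimum on $\Geo_u^\w$ to the coordinate-wise formula; this is justified by the total order of $\Geo_u^\w$ (properties \eqref{G0.i}--\eqref{G0.iii}) and its compactness in the product-discrete topology, which together guarantee that infima and suprema exist in $\Geo_u^\w$ and agree with the coordinate-wise extremes.
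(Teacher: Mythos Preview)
Your proof is correct and follows essentially the same approach as the paper: enumerate $\RI_u^\w$ by the measurable paths $\rho_{\sigma}(\w)$ indexed by finite up-right segments $\sigma$, then invoke Lemma~\ref{lem:infmeas} to get $\kS$-measurability of $\what\mapsto\nu^{\w(\what)}(\cgeod{}^{\Bhat,u}\preceq\rho_\sigma(\w(\what)))$. The only difference is in the concluding step: the paper verifies directly that each cylinder event $\{\cdfpi^{\Bhat,u,s}_{0:m}=\pi_{0:m}\}$ is a countable Boolean combination of the measurable events $\{\nu^{\w}(\cgeod{}^{\Bhat,u}\preceq\rho_\sigma)\ge s\}$, whereas you package the same information as a countable $\preceq$-infimum of the measurable paths $\tilde\rho^{k,s}$ and evaluate it coordinate-wise. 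Your justification of the coordinate-wise formula is sound---because $\{\tilde\rho^{k,s}(\what)\}_k\subset\Geo_u^{\w(\what)}$ is totally ordered with integer coordinates, the $\preceq$-minimum over any finite subcollection is one of its members, and compactness of $\Geo_u^\w$ lets you pass to the limit---so the two arguments are minor variants of one another.
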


\begin{proof}
Fix a deterministic finite admissible path $\sigma_{0:n}$ rooted at the origin. Then the path defined by $\rho_{\sigma_{0:n}}(\w(\what)) = \sup\{\pi \in \Geo_0^{\w(\what)} : \pi_{0:n}=\sigma_{0:n}\}$ if such a $\pi$ exists and $\rho_{\sigma_{0:n}}(\w(\what))=\bbZ_+e_1$ otherwise is measurable by the results of the previous sub-section. Moreover, this collection enumerates $\RI_0^\w$.  It then follows from the previous result that 
\[
\what\mapsto\nu^{\w(\what)}\{ \what: \cgeod{}^{\Bhat,0}(\what)\preceq \rho_{\sigma_{0:n}}(\w(\what))\}\in[0,1]
\]
is $\kS$-measurable. Then for each $s \in (0,1)$ and each finite admissible path $\sigma_{0:n}$, the event
\[
\{\nu^{\w(\what)}(\cgeod{}^{\Bhat,0}\preceq \rho_{\sigma_{0:n}}(\w(\what))) \geq s\}
\]
is $\kS$-measurable. Fix some finite path $\pi_{0:m}$. The event
\be\label{aux_90}
\{\what: \cdfpi^{\Bhat,0,s}_{0:m}(\what) = \pi_{0:m}\}
\ee
is measurable because it is equal to the intersection of two events: (i) there exists a path $\sigma_{0:n}$ which is on $\Geo_0^{\w(\what)}$, $n \geq m$, where $\pi_{0:m}\subset \sigma_{0:n}$ for which 
$\nu^{\w(\what)}(\cgeod{}^{\Bhat,0}\preceq \rho_{\sigma_{0:n}}(\w(\what))) \geq s$,
\[
\bigcup_{n \geq m}\bigcup_{\substack{\sigma_{0:n} \text{ up-right}: \\
\sigma_{0:m} = \pi_{0:m}}} \{\sigma_{0:n} \text{ is on }\Geo_0^{\w(\what)}, \nu^{\w(\what)}(\cgeod{}^{\Bhat,0}\preceq \rho_{\sigma_{0:n}}(\w(\what))) \geq s\};
\]
and (ii) for any $n \geq m$ and any finite path $\sigma_{0:n}$ which lies on the tree $\Geo_0^{\w(\what)}$ with the property that $\sigma_{0:m} \precneq \pi_{0:m}$, we have
$\nu^{\w(\what)}(\cgeod{}^{\Bhat,0}\preceq \rho_{\sigma_{0:n}}(\w(\what))) < s$:
\[
\bigcap_{n \geq m}\bigcap_{\substack{\sigma_{0:n} \text{ up-right} : \\ \sigma_{0:m} \precneq \pi_{0:m}}} \big(\{\sigma_{0:n}\text{ is on }\Geo_0^{\w(\what)},\nu^{\w(\what)}(\cgeod{}^{\Bhat,0}\preceq \rho_{\sigma_{0:n}}(\w(\what))) < s \}\cup\{\sigma_{0:n} \text{ is not on }\Geo_0^{\w(\what)}\}\big).
\]
Events of the type \eqref{aux_90} generate $\sB(\bbX_0)$, so the claim follows. 
\end{proof}

We also have the following Lemma concerning generation of $\sB(\bbX_0)$.
\begin{lemma}\label{lem:gen}
 The family of events $\{\geodd{0:\infty} \in \bbX_0 : \geodd{0:\infty} \preceq \pi_{0:\infty}\}$, $\pi_{0:\infty}\in\bbX_0$ generates $\sB(\bbX_0)$.
\end{lemma}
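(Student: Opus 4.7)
The plan is to establish the two inclusions separately. Writing $\sC$ for the family in the statement, the containment $\sigma(\sC) \subset \sB(\bbX_0)$ is immediate: each $\{\gamma\preceq\pi\} = \bigcap_{n\ge 0}\{\gamma : \gamma_n\cdot e_1 \le \pi_n\cdot e_1\}$ is closed in the product-discrete topology. For the reverse inclusion, I will use the fact that $\bbX_0$, as a closed subset of $(\bbZ_+^2)^{\bbZ_+}$, is second countable with topology generated by the finite-dimensional cylinders $\{\gamma : \gamma_{0:n} = \sigma_{0:n}\}$. Since any such cylinder is a finite intersection of atomic events $\{\gamma : \gamma_m = p\}$ for $m\le n$ and $p$ on the $m$th antidiagonal, it will suffice to show that each atomic event lies in $\sigma(\sC)$.

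The main step is to exhibit, for every integer $n\ge 1$ and every integer $k$ with $0\le k\le n$, a single path $\pi^{n,k}\in\bbX_0$ for which
\[
\{\gamma\in\bbX_0 : \gamma\preceq \pi^{n,k}\} \;=\; \{\gamma\in\bbX_0 : \gamma_n\cdot e_1 \le k\}.
\]
The natural candidate is defined by $k$ consecutive $e_1$-steps, then $n-k$ consecutive $e_2$-steps, then all $e_1$-steps; explicitly $\pi^{n,k}_m\cdot e_1 = \min(m,k)$ for $m\le n$ and $\pi^{n,k}_m\cdot e_1 = k+(m-n)$ for $m\ge n$. The forward inclusion is immediate from $\pi^{n,k}_n\cdot e_1 = k$. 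For the converse, given $\gamma\in\bbX_0$ with $\gamma_n\cdot e_1 \le k$, monotonicity of $m\mapsto \gamma_m\cdot e_1$ yields $\gamma_m\cdot e_1\le \min(m,k)$ for $m\le n$, while the bound $\gamma_m\cdot e_1-\gamma_n\cdot e_1 \le m-n$ yields $\gamma_m\cdot e_1 \le k+(m-n)$ for $m\ge n$; in both ranges $\gamma_m\cdot e_1 \le \pi^{n,k}_m\cdot e_1$, as required.

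Given this key identity, for any $p$ on the $n$th antidiagonal with $k=p\cdot e_1$, the atomic event $\{\gamma_n=p\}$ equals $\{\gamma_n\cdot e_1\le k\}\setminus\{\gamma_n\cdot e_1\le k-1\}$ (interpreting the second set as empty when $k=0$), and both components lie in $\sC$ by the construction above. Finite intersections across $m\le n$ then produce all cylinder events $\{\gamma_{0:n}=\sigma_{0:n}\}$, which form a basis for $\sB(\bbX_0)$, completing the proof. The only nontrivial ingredient is the explicit construction of the dominating path $\pi^{n,k}$; once that is in hand the rest is standard bookkeeping with generating classes, and I do not anticipate further obstacles.
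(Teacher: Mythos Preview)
Your proof is correct and follows essentially the same approach as the paper: your dominating path $\pi^{n,k}$ coincides with the paper's $\gamma^x$ for $x=(k,n-k)$, and both arguments recover the atomic events $\{\gamma_n=p\}$ as a set difference of two members of $\sC$. You supply a more explicit verification of the identity $\{\gamma\preceq\pi^{n,k}\}=\{\gamma_n\cdot e_1\le k\}$ than the paper does, but the underlying idea is the same.
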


\begin{proof}
 Cylinder events of the form $\{\geodd{0:\infty}\in\bbX_0:\geodd{0:n}=\geoddd{0:n}\}$ are intersections of events of the form $\{\geodd{0:\infty}\in\bbX_u:\geodd{n}=x\}$, $x\in \Z_+^2$. For a given $x\in \Z_+^2$, let $\geoddd{0:\infty}^x$ be the up-right path that starts with $x\cdot e_1$ $e_1$-steps, then takes $x\cdot e_2$ $e_2$-steps, getting to $x$, then from there only takes $e_1$ steps.   Then, for $x\ne ne_2$, we have 
 \[\{\geodd{0:\infty}\in\bbX_0:\geodd{n}=x\}=\{\geodd{0:\infty}\in\bbX_0:\geodd{0:\infty}\preceq \geoddd{0:\infty}^x\}\setminus\{\geodd{0:\infty}\in\bbX_0:\geodd{0:\infty}\preceq\geoddd{0:\infty}^{x+e_2-e_1}\}.\]
 For $x=ne_2$ we have
 \[\{\geodd{0:\infty}\in\bbX_0:\geodd{n}=x\}=\{\geodd{0:\infty}\in\bbX_0:\geodd{0:\infty}\preceq \geoddd{0:\infty}^x\}.\qedhere\]
\end{proof}

\subsection{Non-existence of trivial Busemann geodesics}
\begin{lemma}\label{lem:xhat nontrivial}
Assume \eqref{var>0}, \eqref{iid}, and $\Ehat[\w_0^2]<\infty$. Let $\Bhat \in \cKhat$. Then \eqref{no e2} holds.
\end{lemma}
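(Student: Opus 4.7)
The plan is to argue by contradiction, combining the cocycle shape theorem \eqref{B-shape}, the strong law of large numbers along a coordinate axis, and Martin's theorem that $\partial\shape(e_i)=\varnothing$ under \eqref{iid}. By shift-covariance \eqref{xhat-cov} and invariance of $\Phat$ it suffices to treat $u=0$; I will describe the argument for the event $A:=\{\cgeod{}^{\Bhat,0}=\Z_+e_1\}$, with the $e_2$-ray handled symmetrically.

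The first step is to identify $\Bhat(0,ne_1)$ explicitly on $A$. Whenever $\what\in A$ the arrow $\edgehat_{ke_1}^{\Bhat}$ equals $e_1$ for every $k\ge 0$, so the recovery property \eqref{recovery} forces $\Bhat(ke_1,(k+1)e_1)=\w_{ke_1}$, and the cocycle identity telescopes to
\[
\Bhat(\what,0,ne_1)=\sum_{k=0}^{n-1}\w_{ke_1}(\what),\qquad n\ge 1,\ \what\in A.
\]
The weights $(\w_{ke_1})_{k\ge 0}$ are i.i.d.\ and integrable by \eqref{iid}, so the strong law gives $n^{-1}\sum_{k=0}^{n-1}\w_{ke_1}\to\Ehat[\w_0]=\shape(e_1)$, $\Phat$-a.s. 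Combined with the cocycle shape theorem \eqref{B-shape}, which yields $n^{-1}\Bhat(0,ne_1)\to -\hhB(\Bhat)\cdot e_1$ $\Phat$-a.s., the assumption $\Phat(A)>0$ then forces $-\hhB(\Bhat)\cdot e_1=\shape(e_1)$ on a subset of $A$ of positive $\Phat$-measure.

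The second step converts this equality into a contradiction via convex analysis. Part (a) of Lemma \ref{lem:hsupdif} gives $-\hhB(\Bhat)\in\partial\shape(\Uset)$ $\Phat$-a.s., so I pick a sample $\what$ in the intersection of this full-measure event with the positive-measure subset from the previous step and let $h:=-\hhB(\Bhat,\what)\in\partial\shape(\xi)$ for some $\xi\in\Uset$. Positive homogeneity of $\shape$ forces $h\cdot\xi=\shape(\xi)$, which combined with $h\cdot e_1=\shape(e_1)$ lets the supergradient inequality at $\xi$ chain as
\[
\shape(\zeta)\le\shape(\xi)+h\cdot(\zeta-\xi)=h\cdot\zeta=\shape(e_1)+h\cdot(\zeta-e_1)\qquad\forall\zeta\in\R_+^2,
\]
i.e., $h\in\partial\shape(e_1)$. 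Since Martin's theorem \cite{Mar-04} gives $\partial\shape(e_1)=\varnothing$ under \eqref{iid}, this is the sought contradiction and forces $\Phat(A)=0$. Repeating with $e_1\leftrightarrow e_2$ completes the proof.

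The main obstacle I anticipate is of a bookkeeping nature: $\hhB(\Bhat)$ is a priori random, so the equality $-\hhB(\Bhat)\cdot e_1=\shape(e_1)$ is not a statement in mean but holds only on $A$, and the convex-analytic deduction $h\in\partial\shape(e_1)$ must be executed pointwise on a good $\what$. Once $A$ is intersected with the three a.s.\ events carrying the SLLN, the cocycle shape theorem, and Lemma \ref{lem:hsupdif}(a), the remainder is a short deterministic computation and the appeal to Martin's curvature result closes the loop.
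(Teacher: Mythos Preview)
Your proof is correct but takes a genuinely different route from the paper's. The paper argues at the $\sqrt n$ scale: on the event $\{\cgeod{}^{\Bhat,0}=\Z_+e_2\}$ it uses cocyclicity and recovery to bound $\Bhat(ne_2,e_1+ne_2)/\sqrt n$ below by $n^{-1/2}\sum_{k=0}^{n-1}(\w_{e_1+ke_2}-\w_{ke_2})$ plus a vanishing term; the left side goes to zero in probability (identically distributed integrable increments by shift-covariance) while the right side has $\limsup=+\infty$ almost surely by random-walk fluctuations, using \eqref{var>0} and the second-moment hypothesis directly. Your argument instead works at scale $n$, combining the SLLN, the cocycle shape theorem \eqref{B-shape}, and Lemma~\ref{lem:hsupdif}(a) to force $-\hhB(\Bhat)\in\partial\shape(e_1)$ on a positive-probability event, and then invoking Martin's result that $\partial\shape(e_1)=\varnothing$. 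The paper's approach is more self-contained and explains why the second-moment and positive-variance hypotheses appear in the statement; your approach is cleaner but outsources the hard work to Martin's curvature theorem (which itself needs nondegeneracy and the moment bound in \eqref{iid}), together with the cocycle shape theorem and Lemma~\ref{lem:hsupdif}. Both are valid under the stated assumptions, and there is no circularity since none of the cited results depend on the present lemma.
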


\begin{proof}
By the shift invariance of $\Phat$ and shift covariance of $\Bhat$, it is enough to consider $u=0$. On the event  $\cgeod{}^{\Bhat,0}=\Z_+ e_2$  we have, by 
the cocycle and the recovery properties, 
\begin{align*} 
&\sum_{k=0}^{n-1} \w_{ke_2}  + \Bhat(ne_2, e_1+ne_2) 
= \sum_{k=0}^{n-1} \Bhat(ke_2, (k+1)e_2)  + \Bhat(ne_2, e_1+ne_2) \\
&\quad =  \Bhat(0,e_1) + \sum_{k=0}^{n-1} \Bhat(e_1+ke_2, e_1+(k+1)e_2)  
    \ge   \Bhat(0,e_1) + \sum_{k=0}^{n-1} \w_{e_1+ke_2}
\end{align*} 
from which 
\begin{align*} 
\frac{\Bhat(ne_2, e_1+ne_2)}{\sqrt n} 
   \ge  \frac1{\sqrt n}  \sum_{k=0}^{n-1} (\w_{e_1+ke_2} - \w_{ke_2}) + \frac{\Bhat(0, e_1)}{\sqrt n}.
\end{align*}
Now the left-hand side goes to $0$ in probability and hence almost surely along some subsequence of any given subsequence, while the limsup of the right-hand side is infinite almost surely.
\end{proof}

\subsection{Sufficient condition for ergodicity}
We next prove the sufficient condition for a cocycle to be ergodic, which was recorded as Lemma \ref{lem:ergosuf} above.
\begin{proof}[Proof of Lemma \ref{lem:ergosuf}]
Replacing $\hat u$ with $-\hat u$, if necessary, we can restrict attention to the case $\hat u\in(\R_+\times\R_-)\setminus\{0\}$.
By Lemma \ref{lem:hsupdif}\eqref{hsupdif.a} and the observation following \eqref{eq:Martin}, 
$\hhB(\Bhat)\cdot \hat u \in \bigl(-\partial\shape(\ri\Uset)\bigr)\cdot\hat u=\{ h\cdot \hat u : h \in -\partial \shape(\ri\Uset) \}$,
$\Phat$-almost surely.  
Since, by assumption, 
$\hhB(\Bhat)\cdot \hat u = \Ehat[\hhB(\Bhat)]\cdot \hat u$, $\Phat$-almost surely, 
it follows that 
$\Ehat[\hhB(\Bhat)]\cdot \hat u \in\bigl(-\partial\shape(\ri\Uset)\bigr)\cdot\hat u$.  
That is, there exists $h \in -\partial \shape(\ri\Uset)$ such that 
$h\cdot \hat u = \Ehat[\hhB(\Bhat)]\cdot \hat u$.  
The assumption then implies that 
$\hhB(\Bhat)\cdot \hat u = h\cdot \hat u$, $\Phat$-almost surely.
 This implies that $\hhB(\Bhat)=h$, with $\Phat$-probability one. Indeed, Lemma 4.6(a) in \cite{Jan-Ras-20-aop} says that on the event $\{h\cdot e_1<\hhB(\Bhat)\cdot e_1\}$, 
we have $h\cdot e_2>\hhB(\Bhat)\cdot e_2$ and, therefore, $h\cdot\hat u<\hhB(\Bhat)\cdot\hat u$. A similar argument goes on the event $\{h\cdot e_1>\hhB(\Bhat)\cdot e_1\}$. Therefore, $h\cdot e_1=\hhB(\Bhat)\cdot e_1$ holds on the full event $\{\hhB(\Bhat)\cdot\hat u=h\cdot\hat u\}$. Lemma 4.6(a) in \cite{Jan-Ras-20-aop} implies then that the two vectors are almost surely equal as claimed. In particular, $h=\Ehat[\hhB(\Bhat)]$.
\end{proof}

\section{Busemann functions generated by geodesics}  \label{a:geoBus}

First we prove that in almost every environment $\w$, each nontrivial  semi-infinite geodesic generates a recovering cocycle. Then we record  some basic properties of these cocycles.

\begin{theorem}\label{thm:g400} 
Assume weights are i.i.d.\ with $p>2$ moments.   Then there exists an event $\Omega_0$ of full probability on which the following holds. For each $\w\in\Omega_0$ and every semi-infinite geodesic  $\pi_{k:\infty}$ in the environment $\w$  such that  
$\pi_n\cdot\evec_i\to\infty$  for both $i\in\{1,2\}$,   there exists a finite Busemann function 
\be\label{g408a}  \Busgeo(\w,x,y) =  \lim_{n\to\infty}  [\Lpp_{x,\pi_n}(\w) - \Lpp_{y, \pi_n}(\w) ] \quad \forall x,y\in\Z^2    \ee
that recovers the weights $\w$: 
\[  \w_x = \Busgeo(\w,x,x+\evec_1) \wedge \Busgeo(\w,x,x+\evec_2).\qedhere \]  
\end{theorem}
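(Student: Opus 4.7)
My approach transfers the problem to the extended probability space from Theorem~\ref{thm:exist}, where the full tilt-indexed Busemann process $\Bhat^{h\pm}$ is available, and uses a squeeze between two Busemann values $\Bhat^{h^\pm}(x,y)$ for tilts $h^\pm$ converging to a common tilt determined by the direction of $\pi$. Since the event in the conclusion of Theorem~\ref{thm:g400} depends only on $\w$, it suffices to establish it on a full $\Phat$-probability subset of $\Omhat$, which then descends to a full $\P$-probability event on $\Omega$.

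Fix a countable dense subset $\cH_0 \subset -\partial \shape(\Uset)$ containing the two extremal tilts of every flat face of $-\partial \shape(\Uset)$. On a full-probability event $\Omhat_0$ in $\Omhat$, for every $h \in \cH_0$, $\Bhat^{h\pm}$ is a recovering cocycle satisfying the cocycle shape theorem \eqref{B-shape}, and the LPP shape theorem \eqref{sh-th} holds. For $x,y\in\Z^2$ and non-trivial $\pi$, once $n$ is large enough that $\pi_n\ge x\vee y$ coordinatewise, the recovery inequality $\w_u \le \Bhat^h(u,u+e_i)$ telescopes along any up-right path from $z\in\{x,y\}$ to $\pi_n$ to give $\Lpp_{z,\pi_n}\le \Bhat^h(z,\pi_n)$. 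The cocycle identity then yields
\begin{equation*}
\Delta_n := \Lpp_{x,\pi_n}-\Lpp_{y,\pi_n} = \Bhat^h(x,y) + E^h_n(y) - E^h_n(x), \qquad E^h_n(z) := \Bhat^h(z,\pi_n)-\Lpp_{z,\pi_n}\ge 0.
\end{equation*}

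The sandwich argument rests on the following key observation: the set $D(\pi)$ of accumulation points of $\pi_n/|\pi_n|_1$ is a compact sub-interval of $\ri\Uset$ lying in a single flat face of $\shape$. This follows from the geodesic property of $\pi$ together with concavity and homogeneity of $\shape$: applying the LPP shape theorem to subsegments $\pi_{n_k:m_k}$ whose endpoint ratios approach distinct accumulation directions $\xi_1, \xi_2 \in D(\pi)$ forces $\shape$ to be affine on $[\xi_1,\xi_2]$. Hence there is a common tilt $h^* \in -\partial \shape(\xi)$ valid for every $\xi\in D(\pi)$. Choose $h^-,h^+\in \cH_0$ approaching $h^*$ from the two sides of the corresponding flat face. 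Planarity and coalescence of $\Bhat^{h^\pm}$-geodesics, combined with the identity $\Bhat^h(x,y) = \Lpp_{x,c^h}-\Lpp_{y,c^h}$ at the coalescence point $c^h$ of the $\Bhat^h$-geodesics from $x$ and $y$, yield the analytic sandwich
\begin{equation*}
\Bhat^{h^*-}(x,y)\le \liminf_{n\to\infty}\Delta_n\le \limsup_{n\to\infty}\Delta_n\le \Bhat^{h^*+}(x,y).
\end{equation*}
At the single tilt $h^*$, Theorem~\ref{thm:exist}(a) gives $\Bhat^{h^*-}(x,y) = \Bhat^{h^*+}(x,y) =: \Bhat^{h^*}(x,y)$, so $\Delta_n\to B_\pi(x,y):=\Bhat^{h^*}(x,y)$, which is finite. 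The cocycle property of $B_\pi$ is inherited from the cocycle structure of $\Delta_n$ in $x, y$. For recovery, the LPP recursion $\Lpp_{x,\pi_n} = \w_x + \max(\Lpp_{x+e_1,\pi_n},\Lpp_{x+e_2,\pi_n})$ (valid for large $n$) passes to the limit after subtracting $\Lpp_{x,\pi_n}$ from both terms, giving $\min(B_\pi(x,x+e_1),B_\pi(x,x+e_2)) = \w_x$.

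The main obstacle will be the analytic sandwich step---translating the geometric ordering of $\pi$ between $\Bhat^{h^\pm}$-geodesics into the inequality on $\Delta_n$. This requires combining planar coalescence of the $\Bhat^{h^\pm}$-geodesics with careful use of the cocycle identity at coalescence points, together with a uniform handling of the flat-face case (which is why both extremal tilts of every flat face must lie in $\cH_0$, so that left/right continuity of $h \mapsto \Bhat^{h\pm}(x,y)$ closes the squeeze).
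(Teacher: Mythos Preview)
Your sandwich strategy has a genuine gap at the closing step. You invoke Theorem~\ref{thm:exist}(a) to conclude $\Bhat^{h^*-}(x,y)=\Bhat^{h^*+}(x,y)$, but that equality is almost sure for each \emph{fixed} tilt $h$, with the exceptional null set depending on $h$. In your argument $h^*$ is determined by the geodesic $\pi$, and the theorem quantifies over \emph{all} non-trivial semi-infinite geodesics on a single event $\Omega_0$. As $\pi$ ranges over $\Geo^\w$, the tilt $h^*(\pi)$ can hit any element of $-\partial\shape(\ri\Uset)$, an uncountable set, so you cannot simply intersect the null sets from part~(a). Worse, it is precisely at the random tilts where $\Bhat^{h-}\ne\Bhat^{h+}$ that one expects multiple geodesics sharing the same asymptotic direction but generating \emph{distinct} Busemann functions, so the squeeze genuinely fails to close there and the identification $B_\pi=\Bhat^{h^*}$ is not available. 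Since your argument for the very existence of $\lim_n\Delta_n$ rests entirely on closing this squeeze, neither existence nor finiteness is established.

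The paper's route sidesteps this by separating existence from finiteness and never attempting to identify $B_\pi$ with a process value at a random tilt. Existence of $\lim_n(\Lpp_{x,\pi_n}-\Lpp_{\pi_k,\pi_n})\in(-\infty,\infty]$ is a deterministic monotonicity fact (superadditivity along $\pi$), and this limit is finite on the quadrant $\pi_k+\Z_+^2$. Finiteness is then propagated outward one lattice step at a time by a \emph{one-sided} comparison: pick a single direction $\zeta$ strictly to the northwest of $D(\pi)$ (possible by Martin's boundary curvature) and use the planar crossing inequality to bound $\Lpp_{x-e_1,\pi_n}-\Lpp_{x,\pi_n}$ by the corresponding increment along the $B^{\zeta+}$-Busemann geodesic out of $\pi_k$; that increment has finite $\limsup$ by the uniform bound \eqref{g602} over countably many such $\zeta$. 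Since only boundedness (not equality) is extracted from the comparison, no appeal to $\Bhat^{h-}=\Bhat^{h+}$ at a random $h$ is ever needed.
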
 
 

\begin{proof}    
Recovery at $x$  follows once the limits in \eqref{g408a} are proved for $x$ and  $y\in\{x+\evec_1, x+\evec_2\}$:   for large enough $n$, 
\begin{align*}
&\Lpp_{x, \pi_n} = \w_x + \Lpp_{x+\evec_1, \pi_n}\vee \Lpp_{x+\evec_2, \pi_n}  \\
&\qquad \implies \ \ \w_x = (\Lpp_{x, \pi_n}-\Lpp_{x+\evec_1, \pi_n})\wedge (\Lpp_{x, \pi_n}- \Lpp_{x+\evec_2, \pi_n} )  \underset{n\to\infty}\longrightarrow  \Busgeo(x,x+\evec_1) \wedge \Busgeo(x,x+\evec_2).  
\end{align*}

We begin with a purely deterministic lemma that gives the limit \eqref{g408a} in a northeast quadrant. 

\begin{lemma} \label{lm:g406}  Consider a fixed   weight configuration $\w\in\R^{\Z^2}$ and $k\in\Z$.  Suppose $\pi_{k:\infty}$ is a semi-infinite geodesic such that $\pi_n\cdot\evec_i\to\infty$ for $i\in\{1,2\}$.   Then  for all $x\in\Z^2$ the  monotone nondecreasing  limit 
\be\label{g408b} 
 \Busgeo(x,\pi_k) =  \lim_{n\to\infty}  ( \Lpp_{x,\pi_n} - \Lpp_{\pi_k, \pi_n} )    \ee
exists in $(-\infty, \infty]$.  
On the  quadrant $\pi_k+\Z_+^2$ we have a finite  Busemann function 
\be\label{g408}  
\Busgeo(x,y) =  \lim_{n\to\infty}  ( \Lpp_{x,\pi_n} - \Lpp_{y, \pi_n} ) .    \ee
\end{lemma} 

\begin{proof} 
  Given $x$, let   $N$ be any index such that $\pi_{N}\ge x$. 
Then for $n\ge N$, 
\begin{align*}
\Lpp_{x,\pi_{n+1}} - \Lpp_{\pi_{N}, \pi_{n+1}} &\ge (\Lpp_{x, \pi_{n}} + \Lpp_{\pi_{n}, \pi_{n+1}} ) - (\Lpp_{\pi_{N}, \pi_{n}} + \Lpp_{\pi_{n}, \pi_{n+1}} ) 
= \Lpp_{x, \pi_{n}}  - \Lpp_{\pi_{N}, \pi_{n}} .  
\end{align*} 
Thus  this monotone nondecreasing  limit exists: 
\be\label{g450} 
\lim_{n\to\infty}  ( \Lpp_{x,\pi_n} - \Lpp_{\pi_{N}, \pi_n} )   \in [ \Lpp_{x, \pi_{N}}, \infty] . 
\ee
Furthermore, since $\Lpp_{\pi_k, \pi_n}=\Lpp_{\pi_k, \pi_N}+\Lpp_{\pi_N, \pi_n}$ for $k\le N\le n$,  we have  this monotone nondecreasing  limit: 
\be\label{g452} 
\Busgeo(x,\pi_k)=\lim_{n\to\infty}  ( \Lpp_{x,\pi_n} - \Lpp_{\pi_k, \pi_n} )   \in [ \Lpp_{x, \pi_{N}}-\Lpp_{\pi_k, \pi_N}, \infty] . 
\ee
Now let  $x\in \pi_k+\Z_+^2$.     
For $n$ such that  $ \pi_k\le x\le \pi_n$, 
\[  \Lpp_{\pi_k, \pi_n}\ge  \Lpp_{\pi_k, x} + \Lpp_{x,\pi_n}  
\ \implies \ \Lpp_{x,\pi_n}  -  \Lpp_{\pi_k, \pi_n}  \le -\Lpp_{\pi_k, x}. \]  
Thus for any $x\ge \pi_k$ we have the finite limit 
\be\label{g454} 
\Busgeo(x,\pi_k)=\lim_{n\to\infty}  ( \Lpp_{x,\pi_n} - \Lpp_{\pi_k, \pi_n} )   \in [ \Lpp_{x, \pi_{N}}-\Lpp_{\pi_k, \pi_N}, \, -\Lpp_{\pi_k, x} ]   
\qquad \forall N \text{ such that } \pi_N\ge x.  
\ee
This defines a finite Busemann function 
\be\label{g458}  
\Busgeo(x,y) = \Busgeo(x,\pi_k) - \Busgeo(y,\pi_k)  =  \lim_{n\to\infty}  ( \Busgeo_{x,\pi_n} - \Busgeo_{y, \pi_n} )     \ee
in the quadrant $\pi_k+\Z_+^2$. 
\end{proof} 

Returning to the  proof of  Theorem \ref{thm:g400}, it remains to verify that when weights are i.i.d.\ with $p>2$ moments, we can construct a full-probability event $\Omega_0$ on which  $\Busgeo(x,\pi_k)$ in  \eqref{g408b} is finite for each $x\in\Z^2$ and for every semi-infinite geodesic $\pi_{k:\infty}$ such that $\pi_n\cdot\evec_i\to\infty$  for both $i\in\{1,2\}$.    This can be achieved by combining   known properties of geodesics and Busemann functions in the corner growth model.   Namely, there exists a  full-probability event $\Omega_0$ on which the following properties hold. 
\begin{enumerate} [label={\rm(\alph*)}, ref={\rm\alph*}] \itemsep=2pt 
\item   Each semi-infinite geodesic $\pi_{k:\infty}$  is directed into some $\cU_\xi$ for some $\xi\in[\evec_2, \evec_1]$, meaning that,  as $n\to\infty$,  all the limit points of $\pi_n/n$ lie in $\Uset_\xi$. See Theorem 2.1(i) in \cite{Geo-Ras-Sep-17-ptrf-2} or Theorem 3.14(e) in \cite{Gro-Jan-Ras-25-}. The former assumes the weights are bounded below, but the proof does not require this assumption. 
\item   The only geodesics directed towards $\evec_i$ are the trivial ones of the form $x+\Z_+\evec_i$  (Lemma 5.1 in \cite{Gro-Jan-Ras-25}). 
\item For any sequence $\{u_n\}\subset\Z^2$ such that,  as $n\to\infty$,  $u_n\cdot\evec_i\to\infty$  for both $i\in\{1,2\}$ and  the set of  limit points of $\{u_n/n\}$ is bounded away from $\{\evec_2, \evec_1\}$,  
\be\label{g602}   \varlimsup_{n\to\infty}\abs{\Lpp_{x,u_n}-\Lpp_{y,u_n}} <\infty  \quad\text{$\P$-almost surely} \quad \forall x,y\in\Z^2. 
\ee
This comes from the zero-temperature version of Theorem 4.14 in \cite{Jan-Ras-20-aop}, or by taking the intersection of the full probability events in Theorem 6.1 of \cite{Geo-Ras-Sep-17-ptrf-1} over a countable dense collection of exposed points and  maximal linear segments in $\,]\evec_2, \evec_1[\,$.  
\end{enumerate} 
 
Since we know from Lemma \ref{lm:g406} that  $\Busgeo(x,\pi_k)>-\infty$  for all $x$ and  $\Busgeo(x,\pi_k)<\infty$  for $x$ in a northeast quadrant, it is enough to prove the following statement on the event $\Omega_0$: 
\be\label{g609} 
\text{if $\Busgeo(x,\pi_k)<\infty$, then  $\Busgeo(x-\evec_1,\pi_k)<\infty$ and $\Busgeo(x-\evec_2,\pi_k)<\infty$. } 
\ee
We prove the case $\Busgeo(x-\evec_1,\pi_k)<\infty$, the other one being entirely analogous.  

We can now assume that  for some $\xi\in\,]\evec_2, \evec_1[\,$, as $n\to\infty$,  all the limit points of $\pi_n/n$ lie in $\Uset_\xi$.   $\Uset_\xi$ is a compact segment (possibly a singleton) contained  in the open segment $\,]\evec_2, \evec_1[\,$.   By the curvature of the shape function close to the extreme direction $\evec_2$ implied by \eqref{eq:Martin},  we  can  pick a direction $\zeta\in\,]\evec_2, \xi[\,$ so that the segment $\Uset_\zeta$ lies strictly to the northwest of the segment  $\Uset_\xi$. 
As in equation (2.12) in  \cite{Jan-Ras-Sep-23} or in Section 4 of \cite{Geo-Ras-Sep-17-ptrf-2},   the Busemann function  $B^{\zeta+}$  defines  the Busemann geodesic  $\cgeod{}^{\zeta+,\pi_k}$ started at vertex $\pi_k$,   which takes the horizontal step  $\cgeod{n+1}^{\zeta+,\pi_k}=\cgeod{n}^{\zeta+,\pi_k}+\evec_1$    whenever there is a tie
$B^{\zeta+}(\cgeod{n}^{\zeta+,\pi_k},\cgeod{n}^{\zeta+,\pi_k}+\evec_1)=B^{\zeta+}(\cgeod{n}^{\zeta+,\pi_k},\cgeod{n}^{\zeta+,\pi_k}+\evec_2)$ in the Busemann increments.   

Recall that $\cgeod{}^{\zeta+,\pi_k}$ is indexed so that  $\cgeod{n}^{\zeta+,\pi_k}\cdot(\evec_1+\evec_2)=\pi_n\cdot(\evec_1+\evec_2)$ for all $n\ge k$.   
By Theorem 4.3 in \cite{Geo-Ras-Sep-17-ptrf-2},  $\cgeod{}^{\zeta+,\pi_k}$ is directed into the segment $\Uset_{\zeta+}$.     Thus the two geodesics must separate eventually, and so for large enough $n$,  $\cgeod{n}^{\zeta+,\pi_k}\precneq \pi_n$.   The monotonicity of planar LPP increments  implies that 
\be\label{g619}    \Lpp_{x-\evec_1,\pi_n} -  \Lpp_{x,\pi_n}  \le   \Lpp_{x-\evec_1, \cgeod{n}^{\zeta+,\pi_k}} -  \Lpp_{x, \cgeod{n}^{\zeta+,\pi_k}}   \ee
for any $x\in\Z^2$ such that the LPP values are defined.   This so-called ``path crossing trick'' can be found for example in Lemma \ref{lm:B-order} below.
Now on $\Omega_0$ we have this upper bound, assuming $\Busgeo(x,\pi_k)<\infty$: 
\begin{align*}
\Busgeo(x-\evec_1,\pi_k) &= \lim_{n\to\infty} [\Lpp_{x-\evec_1,\pi_n} - \Lpp_{\pi_k, \pi_n}]  
  =   \lim_{n\to\infty} [ \Lpp_{x-\evec_1,\pi_n} -  \Lpp_{x,\pi_n}   + \Lpp_{x,\pi_n} -  \Lpp_{\pi_k, \pi_n} ]  \\
  &\overset{\eqref{g619}}\le  \varlimsup_{n\to\infty}\babs{ \Lpp_{x-\evec_1, \cgeod{n}^{\zeta+,\pi_k}} -  \Lpp_{x, \cgeod{n}^{\zeta+,\pi_k}}}    + \Busgeo(x,\pi_k) 
  \overset{\eqref{g602}}<\infty. 
\end{align*}
This completes the proof of  Theorem \ref{thm:g400}. 
\end{proof} 


 When $\w$ lies in the event  $\Omega_0$ constructed in Theorem \ref{thm:g400} and $\gamma$ is a nontrivial semi-infinite geodesic in the environment  $\w$, let $\Busgeo_\gamma(\w)$ denote the recovering cocycle constructed in Theorem \ref{thm:g400}. The two trivial semi-infinite geodesics in $\Geo_u^\w$ are $u + \Z_+e_1$ and $u + \Z_+e_2$.  All the other geodesics in $\Geo^\w_u$ are non-trivial.

\begin{lemma} \label{lm:B-order}   Let $\gamma\apreceq \pi$ be two nontrivial semi-infinite geodesics in an environment $\w\in\Omega_0$.   Then, $\Busgeo_\gamma\preceq \Busgeo_\pi$. 
    Consequently, if $\gamma\coal \pi$, then $\Busgeo_\gamma=\Busgeo_\pi$.
\end{lemma}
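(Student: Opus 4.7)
The plan is to reduce everything to the standard path-crossing monotonicity of planar LPP increments (the tool already invoked at \eqref{g619} in the proof of Theorem \ref{thm:g400}) and then pass to the limit in the definition \eqref{g408a}. Unpacking the order on cocycles, it suffices to prove that for every $x \in \Z^2$,
\[
\Busgeo_\gamma(x, x+e_1) \ge \Busgeo_\pi(x, x+e_1) \quad \text{and} \quad \Busgeo_\gamma(x, x+e_2) \le \Busgeo_\pi(x, x+e_2).
\]

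For fixed $x$, I would choose $N$ large enough that for all $n \ge N$ the endpoints $\gamma_n$ and $\pi_n$ lie on a common antidiagonal, coordinatewise dominate $x + e_1 + e_2$, and satisfy $\gamma_n \preceq \pi_n$. This is possible because both geodesics are nontrivial, so each coordinate of $\gamma_n,\pi_n$ tends to $\infty$, and the ordering $\gamma \preceq \pi$ forces eventual southeast ordering of the corresponding points. The key LPP monotonicity then asserts that for any $u \preceq v$ on a common antidiagonal dominating $x + e_1 + e_2$,
\[
\Lpp_{x,u} - \Lpp_{x+e_1,u} \ge \Lpp_{x,v} - \Lpp_{x+e_1,v}, \qquad \Lpp_{x,u} - \Lpp_{x+e_2,u} \le \Lpp_{x,v} - \Lpp_{x+e_2,v}.
\]
For the $e_1$ inequality, a geodesic from $x$ to $v$ and one from $x+e_1$ to $u$ must share a common vertex $z$ by an intermediate-value argument comparing their $e_1$-coordinates along successive antidiagonals; pasting at $z$ produces two up-right paths $x \to z \to u$ and $x+e_1 \to z \to v$ whose combined weight is $\Lpp_{x,v} + \Lpp_{x+e_1,u}$, so suboptimality gives $\Lpp_{x,u} + \Lpp_{x+e_1,v} \ge \Lpp_{x,v} + \Lpp_{x+e_1,u}$. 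The $e_2$ inequality follows from the symmetric surgery applied to geodesics from $x$ to $u$ and from $x+e_2$ to $v$.

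Applying these inequalities with $u = \gamma_n$ and $v = \pi_n$ for $n \ge N$ and sending $n \to \infty$, using Theorem \ref{thm:g400} to justify the convergence on both sides, yields the two desired increment inequalities and hence $\Busgeo_\gamma \preceq \Busgeo_\pi$. For the coalescence consequence, if $\gamma \coal \pi$ then there exists $M$ with $\gamma_n = \pi_n$ for all $n \ge M$, so each difference in \eqref{g408a} is literally identical for $\gamma_n$ and $\pi_n$ once $n \ge M$, whence the two limits coincide identically.

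I do not expect any substantial obstacle here. The one mildly delicate point is that $\gamma \preceq \pi$ is an eventual (asymptotic) ordering, so the monotonicity can only be applied for $n \ge N$, and one must cite Theorem \ref{thm:g400} on each side separately to ensure that the limits defining $\Busgeo_\gamma$ and $\Busgeo_\pi$ exist before passing to the limit in the comparison.
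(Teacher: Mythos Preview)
Your proposal is correct and follows essentially the same approach as the paper: both reduce to the planar path-crossing inequality $\Lpp_{x,\gamma_n}-\Lpp_{x+e_1,\gamma_n}\ge \Lpp_{x,\pi_n}-\Lpp_{x+e_1,\pi_n}$ obtained by intersecting a geodesic from $x$ to $\pi_n$ with one from $x+e_1$ to $\gamma_n$ and pasting at the crossing point, then pass to the limit using Theorem~\ref{thm:g400}. Your treatment of the coalescence consequence (the limits are literally identical for large $n$) is a fine alternative to deducing it from the two-sided ordering.
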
 

\begin{proof} 
Take $x\in\Z^2$. Take $n$ large enough so that $x+e_1\le\gamma_n$, $x+e_1\le\pi_n$, and $\gamma_n\preceq\pi_n$.   Since $x$ is to the left of $\sigma^{x+e_1,\gamma_n}$ and $\pi_n$ to its right, $\sigma^{x+e_1,\gamma_n}$ must intersect $\sigma^{x,\pi_n}$. Let $z$ denote the first intersection point. Then 
\[\Lpp_{x,z}+\Lpp_{z,\gamma_n}\le\Lpp_{x,\gamma_n}\quad\text{and}\quad\Lpp_{x+e_1,z}+\Lpp_{z,\pi_n}\le\Lpp_{x+e_1,\pi_n}.\]
Add the two inequalities, use $\Lpp_{x,z}+\Lpp_{z,\pi_n}=\Lpp_{x,\pi_n}$ and $\Lpp_{x+e_1,z}+\Lpp_{z,\gamma_n}=\Lpp_{x+e_1,\gamma_n}$, and rearrange to get
\[\Lpp_{x,\pi_n}-\Lpp_{x+e_1,\pi_n}\le\Lpp_{x,\gamma_n}+\Lpp_{x+e_1,\gamma_n}.\]
Take $n\to\infty$ to get $\Busgeo_\pi(x,x+e_1)\le\Busgeo_\gamma(x,x+e_1)$. The inequality $\Busgeo_\pi(x,x+e_2)\ge\Busgeo_\gamma(x,x+e_2)$ is proved similarly.
\end{proof}


Given   a recovering cocycle $B$ in an environment $\w\in\Omega$, a \textit{$B$-geodesic} is an up-right path $\pi$, finite or infinite, whose steps obey minimal $B$-increments: 
$B(\pi_i, \pi_{i+1})=B(\pi_i, \pi_{i}+\evec_1)\wedge B(\pi_i, \pi_{i}+\evec_2)=\w_{\pi_i}$. Such a path is a geodesic. 
The $\evec_1$ tiebreaker geodesic  $\cgeod{}^{B,u,+}$ is the semi-infinite geodesic that starts at vertex $u$, follows minimal increments of $B$, and takes an $\evec_1$ step at a tie. It is the rightmost geodesic between any two its vertices \cite[Lemma 4.1]{Geo-Ras-Sep-17-ptrf-2}.  Analogously, $\cgeod{}^{B,u,-}$ is the semi-infinite $B$-geodesic from $u$ that takes an $\evec_2$ step at a tie. 

\begin{lemma} \label{lm:g428}  Let $B$ be a recovering cocycle in an  environment $\w\in\Omega_0$. Suppose there exists 
  a coalescing family  $\{\pi^u\}_{u\tsp\in\tsp\Z^2}$  of   semi-infinite $B$-geodesics  from all initial vertices $u\in\Z^2$. 
  Then $\Busgeo_{\pi^u}(\w)=B$ for every geodesic $\pi^u$ from this family. 
\end{lemma}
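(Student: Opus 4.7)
The plan is to use the coalescence of the family $\{\pi^u\}$ to reduce the question to a single identity along one path: for every $v\in\Z^2$ and every $n$ past the root antidiagonal of $\pi^v$, one has $\Lpp_{v,\pi^v_n}=B(v,\pi^v_n)$. This is the classical ``$B$ computes $\Lpp$ along $B$-geodesics'' statement. It follows from three ingredients already available at this point of the paper: recovery together with the minimizing rule defining a $B$-geodesic, which give $\w_{\pi^v_i}=B(\pi^v_i,\pi^v_{i+1})$ at every step; the cocycle property, which telescopes these increments to $B(v,\pi^v_n)$; and the fact (noted just above the statement of the lemma) that a $B$-geodesic is a genuine LPP geodesic, so that the path sum of weights agrees with $\Lpp_{v,\pi^v_n}$.

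Granted this identity, fix $u,x,y\in\Z^2$. Coalescence of $\pi^u$, $\pi^x$ and $\pi^y$ yields an index $N$ such that $\pi^u_n=\pi^x_n=\pi^y_n$ for all $n\ge N$. For every such $n$,
\begin{align*}
\Lpp_{x,\pi^u_n}-\Lpp_{y,\pi^u_n}
&=\Lpp_{x,\pi^x_n}-\Lpp_{y,\pi^y_n}
=B(x,\pi^x_n)-B(y,\pi^y_n)\\
&=B(x,\pi^u_n)-B(y,\pi^u_n)=B(x,y),
\end{align*}
where the last equality is the cocycle property. Sending $n\to\infty$ and invoking Theorem~\ref{thm:g400} to identify the left-hand side as $\Busgeo_{\pi^u}(\w,x,y)$ gives the claim.

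The one delicate point worth flagging is that Theorem~\ref{thm:g400} only produces $\Busgeo_{\pi^u}$ when $\pi^u$ is nontrivial in the sense $\pi^u_n\cdot e_i\to\infty$ for both $i\in\{1,2\}$, so I need to check that coalescence rules out trivial members of the family. Because $\pi^u$ is indexed by antidiagonal level, the supremum of the $e_2$-coordinates reached along $\pi^u$, call it $h(u)$, depends only on the tail of $\pi^u$, and coalescence $\pi^u\coal\pi^v$ forces $h(u)=h(v)$. Since $h(u)\ge u\cdot e_2$ and $u$ ranges over all of $\Z^2$, this common value must be $+\infty$; symmetrically, the $e_1$-coordinate also diverges along every $\pi^u$. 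Thus every member of the coalescing family is nontrivial, $\Busgeo_{\pi^u}$ is well defined, and the computation above concludes the proof.
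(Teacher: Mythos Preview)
Your proof is correct and follows essentially the same approach as the paper: both arguments use coalescence to align the geodesics and then exploit the identity $\Lpp_{v,\pi^v_n}=B(v,\pi^v_n)$ along $B$-geodesics together with the cocycle property to identify the Busemann limit with $B$. The paper's version is slightly terser (it computes $\Busgeo_{\pi^u}(x,u)=B(x,u)$ and then invokes the cocycle property rather than handling general $x,y$ directly), and it does not explicitly verify nontriviality of the $\pi^u$; your coalescence argument for nontriviality is a nice addition that makes the appeal to Theorem~\ref{thm:g400} fully justified.
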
 

\begin{proof}     
Given $x$ and $u$, let $\pi^u_N$ be the point where   $\pi^x$ and $\pi^u$ first coalesce.   Then for $n\ge N$, since we can follow $B$-geodesics, 
\begin{align*}
\Lpp_{x, \pi^u_n}-\Lpp_{u, \pi^u_n}
=B(x,\pi^u_n)-B(u,\pi^u_n) =B(x,u). 
\end{align*} 
Letting $n\to\infty$ gives  $\Busgeo
_{\pi^u}(x,u)=B(x,u)$. This and the cocycle property give $\Busgeo_{\pi^u}=B$.
\end{proof} 

In particular,  Lemma \ref{lm:g428} implies that if a recovering cocycle $B$ generates a coalescing family of cocycle geodesics, then any one of these geodesics is enough to identify $B$. 

\begin{lemma} \label{lm:Arho6}   
Let $\pi=\pi_{k:\infty}$ be a  nontrivial semi-infinite geodesic  in a fixed  environment $\w\in\Omega_0$ and  $\Busgeo_\pi(\w)$   the recovering cocycle constructed in Theorem \ref{thm:g400}. 
The geodesic $\pi$  is an $\Busgeo_\pi$-geodesic. It lies  between $\cgeod{}^{\pi_k,\Busgeo_\pi,-}$ and $\cgeod{}^{\pi_k,\Busgeo_\pi,+}$, the two geodesics generated by $\Busgeo_\pi$ that  resolve ties by taking $e_2$ and $e_1$ steps, respectively:
\be\label{Arho18}
\cgeod{}^{\pi_k,\Busgeo_\pi,-}\preceq \pi \preceq \cgeod{}^{\pi_k,\Busgeo_\pi,+}.\qedhere\ee
\end{lemma}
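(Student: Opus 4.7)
The plan is to prove the two assertions in sequence. First I would verify that $\pi$ is a $\Busgeo_\pi$-geodesic by identifying the Busemann increment along each step of $\pi$ with the weight at that site. Fix $n\ge k$ and $m>n$. Since $\pi_{n:m}$ is a geodesic from $\pi_n$ to $\pi_m$ and $\pi_{n+1:m}$ is a sub-geodesic, the general inequality $\Lpp_{\pi_n,\pi_m}\le\w_{\pi_n}+\Lpp_{\pi_{n+1},\pi_m}$ is attained, so $\Lpp_{\pi_n,\pi_m}=\w_{\pi_n}+\Lpp_{\pi_{n+1},\pi_m}$. Sending $m\to\infty$ in the definition \eqref{g408a} gives $\Busgeo_\pi(\pi_n,\pi_{n+1})=\w_{\pi_n}$. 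By the recovery property in Theorem \ref{thm:g400}, $\w_{\pi_n}=\Busgeo_\pi(\pi_n,\pi_n+\evec_1)\wedge\Busgeo_\pi(\pi_n,\pi_n+\evec_2)$, so the step $\pi_{n+1}-\pi_n$ achieves the minimum of the Busemann increments at $\pi_n$. Hence $\pi$ is a $\Busgeo_\pi$-geodesic.

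For the ordering, abbreviate $\lambda=\cgeod{}^{\pi_k,\Busgeo_\pi,-}$ and prove by induction on the antidiagonal level $n\ge k$ that $\lambda_n\cdot\evec_1\le\pi_n\cdot\evec_1$. The base case $n=k$ is clear since $\lambda_k=\pi_k$. For the induction step, consider two scenarios. If $\lambda_n\cdot\evec_1<\pi_n\cdot\evec_1$ at level $n$, then $\lambda_n\cdot\evec_1+1\le\pi_n\cdot\evec_1$, so since each path advances by at most one unit in the $\evec_1$ direction,
\[
\lambda_{n+1}\cdot\evec_1\le\lambda_n\cdot\evec_1+1\le\pi_n\cdot\evec_1\le\pi_{n+1}\cdot\evec_1.
\]
If $\lambda_n=\pi_n=v$, then both paths are $\Busgeo_\pi$-geodesics issuing from $v$. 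When $\Busgeo_\pi(v,v+\evec_1)\ne\Busgeo_\pi(v,v+\evec_2)$, both paths are forced to take the unique minimizing step; when the two Busemann increments are tied, the tie-breaking rule selects $\evec_2$ for $\lambda$ while $\pi$ may take either step, so $\lambda_{n+1}\cdot\evec_1\le\pi_{n+1}\cdot\evec_1$ in both cases. The symmetric argument applied to $\cgeod{}^{\pi_k,\Busgeo_\pi,+}$, which breaks ties by $\evec_1$, gives $\pi\preceq\cgeod{}^{\pi_k,\Busgeo_\pi,+}$.

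I do not expect any serious obstacles in this proof. The one mild subtlety is keeping track of the two cases (paths coincident vs.\ already separated) in the induction, but both are handled by the elementary observation that up-right paths cannot cross back over each other in a single step.
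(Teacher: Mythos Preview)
Your proof is correct and follows essentially the same approach as the paper. The first part (showing $\Busgeo_\pi(\pi_n,\pi_{n+1})=\w_{\pi_n}$ via the geodesic property and then invoking recovery) is identical to the paper's argument; for the ordering \eqref{Arho18} the paper simply asserts ``This implies also \eqref{Arho18}'' without details, whereas you spell out the straightforward induction, which is fine. One minor quibble: your ``general inequality'' $\Lpp_{\pi_n,\pi_m}\le\w_{\pi_n}+\Lpp_{\pi_{n+1},\pi_m}$ already uses that $\pi_{n:m}$ is a geodesic (the truly general inequality from the recursion goes the other way), so the phrasing could be tightened, but the conclusion of equality is correct.
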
 

\begin{proof} 
Since $\pi_{m:n}$ is a geodesic between $\pi_{m}$ and $\pi_{n}$ and goes through $\pi_{m+1}$, and then by recovery, 
\begin{align*}
    \Busgeo_\pi({\pi_m, \pi_{m+1}}) &=  
\lim_{n\to\infty}[ \Lpp_{\pi_m, \pi_n}-\Lpp_{\pi_{m+1}, \pi_n}] 
=  
\lim_{n\to\infty}[ \w_{\pi_m}+\Lpp_{\pi_{m+1}, \pi_n}-\Lpp_{\pi_{m+1}, \pi_n}]
\\
&=\w_{\pi_m} =  \Busgeo_\pi({\pi_m, \pi_{m}+\evec_1})\wedge  \Busgeo^\pi({\pi_m, \pi_{m}+\evec_2}). 
\end{align*} 
 This implies also  \eqref{Arho18}.
\end{proof}

\small
\bibliographystyle{plain}
\bibliography{masterbib.bib}

\end{document}